\def\Gaff{\widehat{\mathfrak g}} 
\def\G{\mathfrak g} 
\def\H{\mathfrak h} 
\def\Haff{\widehat{\mathfrak h}} 
\def\Baff{\widehat{\mathfrak b}} 
\def\CG{\mathfrak{Cg}} 
\def\CN{\mathfrak{Cn}} 
\def\CH{\mathfrak{Ch}} 
\DeclareMathOperator{\D}{D}
\DeclareMathOperator{\KRR}{KR}
\DeclareMathOperator{\htt}{ht}
\def\a{\alpha}
\def\l{\lambda}
\def\d{\delta}
\theoremstyle{definition}
\newtheorem*{cor}{Corollary}
\newtheorem*{lem}{Lemma}
\newtheorem*{prop}{Proposition}
\newtheorem{thm}{Theorem}
\newtheorem*{theom}{Theorem}
\theoremstyle{definition}
\newtheorem*{defn}{Definition}
\theoremstyle{definition}
\newtheorem*{rem}{Remark}
\newenvironment{pf}{\proof}{\endproof}
\newcounter{cnt}
\newenvironment{enumerit}{\begin{list}{{\hfill\rm(\roman{cnt})\hfill}}{%
\settowidth{\labelwidth}{{\rm(iv)}}\leftmargin=\labelwidth%
\advance\leftmargin by \labelsep\rightmargin=0pt\usecounter{cnt}}}{\end{list}} \makeatletter
\def\mydggeometry{\makeatletter\dg@YGRID=1\dg@XGRID=20\unitlength=0.003pt\makeatother}
\makeatother \theoremstyle{remark}
\numberwithin{equation}{section}
\let\bwdg\bigwedge
\def\bigwedge{{\textstyle\bwdg}}
\newcommand{\charc}{\hbox{\rm ch}\,}
\newcommand{\wt}{\operatorname{wt}}
\newcommand{\nc}{\newcommand}
\newcommand{\rnc}{\renewcommand}
\nc{\cal}{\mathcal} \nc{\goth}{\mathfrak} \rnc{\bold}{\mathbf}
\newcommand{\supp}{\operatorname{supp}}
\nc\bomega{{\mbox{\boldmath $\omega$}}} \nc\bpsi{{\mbox{\boldmath $\Psi$}}}
 \nc\balpha{{\mbox{\boldmath $\alpha$}}}
 \nc\bpi{{\mbox{\boldmath $\pi$}}}
 \nc\bvpi{{\mbox{\boldmath $\varpi$}}}
\nc\chara{\operatorname{ch}}
  \nc\bxi{{\mbox{\boldmath $\xi$}}}
\nc\bmu{{\mbox{\boldmath $\mu$}}} \nc\bcN{{\mbox{\boldmath $\cal{N}$}}} \nc\bcm{{\mbox{\boldmath $\cal{M}$}}} \nc\blambda{{\mbox{\boldmath
$\lambda$}}}\nc\bnu{{\mbox{\boldmath $\nu$}}}
\newcommand{\lie}[1]{\mathfrak{#1}}
\def\section{\def\@secnumfont{\mdseries}\@startsection{section}{1}%
  \z@{.7\linespacing\@plus\linespacing}{.5\linespacing}%
  {\normalfont\scshape\centering}}
\def\subsection{\def\@secnumfont{\bfseries}\@startsection{subsection}{2}%
  {\parindent}{.5\linespacing\@plus.7\linespacing}{-.5em}%
  {\normalfont\bfseries}}
 \nc{\Hom}{\operatorname{Hom}}
  \nc{\mode}{\operatorname{mod}}
\nc{\End}{\operatorname{End}} \nc{\wh}[1]{\widehat{#1}} \nc{\Ext}{\operatorname{Ext}} \nc{\ch}{\text{ch}} \nc{\ev}{\operatorname{ev}}
\nc{\Ob}{\operatorname{Ob}} \nc{\soc}{\operatorname{soc}} \nc{\rad}{\operatorname{rad}} \nc{\head}{\operatorname{head}}
\def\gr{\operatorname{gr}}
\def\mult{\operatorname{mult}}
\def\loc{\operatorname{loc}}
 \nc{\Cal}{\cal} \nc{\Xp}[1]{X^+(#1)} \nc{\Xm}[1]{X^-(#1)}
\nc{\on}{\operatorname} \nc{\Z}{{\bold Z}} \nc{\J}{{\cal J}}  \nc{\Q}{{\bold Q}}
\nc{\N}{{\bold N}}  \nc\boa{\bold a} \nc\bob{\bold b} \nc\boc{\bold c} \nc\bod{\bold d} \nc\boe{\bold e} \nc\bof{\bold f} \nc\bog{\bold g}
\nc\boh{\bold h} \nc\boi{\bold i} \nc\boj{\bold j} \nc\bok{\bold k} \nc\bol{\bold l} \nc\bom{\bold m} \nc\bon{\mathbb n} \nc\boo{\bold o}
\nc\bop{\bold p} \nc\boq{\bold q} \nc\bor{\bold r} \nc\bos{\bold s} \nc\boT{\bold t} \nc\boF{\bold F} \nc\bou{\bold u} \nc\bov{\bold v}
\nc\bow{\bold w} \nc\boz{\bold z}\nc\ba{\bold A} \nc\bb{\bold B} \nc\bc{\mathbb C} \nc\bd{\bold D} \nc\be{\bold E} \nc\bg{\bold
G} \nc\bh{\bold H} \nc\bi{\bold I} \nc\bj{\bold J} \nc\bk{\bold K} \nc\bl{\bold L} \nc\bm{\bold M} \nc\bn{\mathbb N} \nc\bo{\bold O} \nc\bp{\bold
P} \nc\bq{\bold Q} \nc\br{\bold R} \nc\bs{\bold S} \nc\bt{\bold T} \nc\bu{\bold U} \nc\bv{\bold V} \nc\bw{\bold W} \nc\bz{\mathbb Z} \nc\bx{\bold
x} \nc\KR{\bold{KR}} \nc\rk{\bold{rk}} \nc\het{\text{ht }}
\nc\toa{\tilde a} \nc\tob{\tilde b} \nc\toc{\tilde c} \nc\tod{\tilde d} \nc\toe{\tilde e} \nc\tof{\tilde f} \nc\tog{\tilde g} \nc\toh{\tilde h}
\nc\toi{\tilde i} \nc\toj{\tilde j} \nc\tok{\tilde k} \nc\tol{\tilde l} \nc\tom{\tilde m} \nc\ton{\tilde n} \nc\too{\tilde o} \nc\toq{\tilde q}
\nc\tor{\tilde r} \nc\tos{\tilde s} \nc\toT{\tilde t} \nc\tou{\tilde u} \nc\tov{\tilde v} \nc\tow{\tilde w} \nc\toz{\tilde z} \nc\woi{w_{\omega_i}}
\begin{document}


\title{Twisted Demazure modules, fusion product decomposition and twisted Q--systems}

\author{Deniz Kus}
\address{Mathematisches Institut, Universit\" at zu K\" oln, Germany}
\email{dkus@math.uni-koeln.de}
\thanks{D.K. was partially supported by the “SFB/TR 12-Symmetries and
Universality in Mesoscopic Systems”.}
\author{R. Venkatesh}
\thanks{}
\address{Tata Institute of Fundamental Research, Mumbai, India}
\email{r.venkatmaths@gmail.com}

\begin{abstract}
In this paper, we introduce a family of indecomposable finite--dimensional graded modules for the twisted current algebras. These modules are indexed by an $|R^+|$--tuple of partitions $\bxi=(\xi^{\alpha})_{\alpha\in R^+}$ satisfying a natural compatibility condition. We give three equivalent presentations of these modules and show that for a particular choice of $\bxi$ these modules become isomorphic to Demazure modules in various levels for the twisted affine algebras. As a consequence we see that the defining relations of twisted Demazure modules can be greatly simplified. Furthermore, we investigate the notion of fusion products for twisted modules, first defined in \cite{FL99} for untwisted modules, and use the simplified presentation to prove a fusion product decomposition of twisted Demazure modules. As a consequence we prove that twisted Demazure modules can be obtained by taking the associated graded modules of (untwisted) Demazure modules for simply--laced affine algebras. Furthermore we give a semi--infinite fusion product construction for the irreducible representations of twisted affine algebras. Finally, we prove that the twisted $Q$--sytem defined in \cite{HKOTT02} extends to a non-canonical short exact sequence of fusion products of twisted Demazure modules.
\end{abstract}
\maketitle

\section*{Introduction}
The twisted quantum affine algebras and their representations have been intensively studied. For instance the finite--dimensional irreducible representations are classified in \cite{CP98} in terms of their highest weights. However, the structure of these representations is still unknown except in certain special cases. A certain infinite class of irreducible finite--dimensional representations are called the Kirillov--Reshetikhin modules. Many conjectures for the characters of Kirillov-Reshetikhin modules and of their tensor products have been formulated in \cite{HKOTT02,KS95,R87} and a conjectural induction rule called the twisted $Q$--system is given in \cite{HKOTT02}.
These conjectures are formulated before for the untwisted cases in \cite{K87,KR87} by observing the Bethe Ansatz related to solvable lattice models.
There are many results for these conjectures and related problems; for untwisted quantum affine algebras we refer to \cite{Na03,HKOTY99,H06} and for twisted quantum affine algebras see \cite{H10}. 
It has been shown in these papers that the solutions to the $Q$--systems come from a family of short exact sequences of tensor products of suitable Kirillov--Reshetikhin modules. One of the motivations of this paper is to have a better understanding of these short exact sequences.\par
A different approach to this problem is provided in \cite{CM06} and \cite{CM07} respectively for the twisted cases. The goal of these papers was to understand the $q\mapsto 1$ limit of the solutions of the twisted $Q$--system. It is shown in \cite{CM06,CM07} that the solutions are characters of certain finite--dimensional indecomposable graded representations of the twisted current algebras; also called the (twisted) Kirillov--Reshetikhin modules. The interest in the category of finite--dimensional graded representations of twisted current algebras is therefore naturally originated in the context of the representation theory of twisted quantum affine algebras.\par
A different family of indecomposable finite--dimensional graded modules for the twisted current algebras are called the (twisted) Demazure modules $\D(\ell,\lambda)$ and are indexed by pairs $(\ell,\lambda)$, where $\ell$ is a positive integer and $\lambda$ is a dominant integral weight for the underlying simple Lie algebra. A close relationship between level one twisted Demazure modules and local Weyl modules is developed in \cite{CIK14} and \cite{FK11} respectively. Moreover, it can be observed that the classical decomposition of certain twisted Demazure modules determined in \cite{FoL06} coincides with the classical decomposition of certain twisted Kirillov--Reshetikhin modules determined in \cite{CM06,CM07}. Indeed, we observe that any twisted Kirillov--Reshetikhin module is as a module for the twisted current algebra isomorphic to a certain twisted Demazure module. Motivated by this fact, we study the general theory of twisted Demazure modules in positive level representations of twisted affine algebras.
We remark that an isomorphism between certain Kirillov--Reshetikhin modules and certain Demazure modules for the untwisted cases was established before in \cite{CM06} and \cite{FoL07}.\par

For a twisted affine algebra $\widehat{\lie g}$ we denote by $\CG$ the twisted current algebra associated to $\widehat{\lie g}$, which is essentially defined as the special maximal parabolic subalgebra of $\widehat{\lie g}$. Apart from $\widehat{\lie g}$ of type $\tt A_{2n}^{(2)}$ we can realize $\CG$ as the fixed point subalgebra of $\overline{\lie g}\otimes \bc[t]$ under an automorphism induced from a non--trivial diagram automorphism of $\overline{\lie g}$, where $\overline{\lie g}$ is as in Section~\ref{section4}.
For type $\tt A_{2n}^{(2)}$ there are two conjugacy classes of special maximal parabolic subalgebras, where one is realized in the same fashion and one of them has distinguished properties and is called the hyperspecial twisted current algebra (see \cite{CIK14}). The focus of this paper is on the hyperspecial case and following \cite{CIK14} we refer to the remaining twisted algebras as the special twisted current algebras.\par

The study of twisted Demazure modules will proceed by considering two cases for the following reason. The investigation of the special twisted current algebras eventually relies on the understanding of the representation theory of the current algebra $\mathfrak{sl}_2\otimes \bc[t]$, that is the only rank one current subalgebra that can appear. On the other hand the study of hyperspecial twisted current algebras is quite more challenging and one has to deal with the new phenomenon that the rank one twisted current algebra $\tt A_{2}^{(2)}$ appears as a subalgebra. Furthermore, the hyperspecial twisted current algebra is not realized in the same fashion as the special twisted current algebras and hence many technical difficulties show up.\par
Let us describe our results for the hyperspecial twisted current algebra.
For an $|R^+|$--tuple of partitions $\bxi=(\xi^{\alpha})_{\alpha\in R^+}$ we introduce a family of indecomposable finite--dimensional graded modules $V(\bxi)$ and give three equivalent presentations of these modules. We show that the presentation of these modules can be greatly simplified for so--called special fat hook partitions. Our main results
are the following; we refer to Section~\ref{section2}, Section~\ref{section3} and Section~\ref{section6} for the precise definition of the ingredients.
\begin{theom}
For any pair $(\ell,\lambda)\in \bn\times P^+$, there exists a special fat hook partition $\bxi(\ell,\lambda)$, such that we have an isomorphism of $\CG$--modules
$$V(\bxi(\ell,\lambda))\cong \D(\ell,\lambda).$$
\end{theom}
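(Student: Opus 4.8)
The plan is to prove the isomorphism by identifying the defining presentations of the two modules, the key tool being the simplified presentation of $V(\bxi)$ for special fat hook partitions established in \secref{section3}. First I would define the tuple $\bxi(\ell,\lambda)=(\xi^{\alpha}(\ell,\lambda))_{\alpha\in R^+}$ explicitly: for each $\alpha\in R^+$ write $\lambda(h_\alpha)=\ell q_\alpha+r_\alpha$ with $0\le r_\alpha<\ell$ (the range suitably adjusted for short roots in the hyperspecial setting), and take $\xi^{\alpha}(\ell,\lambda)$ to be the fat hook partition consisting of $q_\alpha$ parts equal to $\ell$ together with a single part equal to $r_\alpha$. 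One then checks that this tuple satisfies the compatibility condition of \secref{section2} and is of special fat hook type, so that $V(\bxi(\ell,\lambda))$ is defined; moreover both $V(\bxi(\ell,\lambda))$ and $\D(\ell,\lambda)$ are finite--dimensional graded cyclic $\CG$--modules generated by a highest weight vector of weight $\lambda$ sitting in degree $0$.

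The core of the argument is a two--sided comparison of relations. For the first direction I would show that the cyclic generator of $\D(\ell,\lambda)$ satisfies the defining relations of $V(\bxi(\ell,\lambda))$, producing a surjection $V(\bxi(\ell,\lambda))\twoheadrightarrow\D(\ell,\lambda)$; by the simplified presentation of \secref{section3} the relations to be verified reduce to the highest weight relations together with a short list of relations of the form $(x^-_\alpha\tensor t^{s})^{(k)}v=0$. For the reverse direction I would check that the generator of $V(\bxi(\ell,\lambda))$ satisfies the standard defining relations of the twisted Demazure module recorded in \cite{CIK14,FoL06}, giving a surjection $\D(\ell,\lambda)\twoheadrightarrow V(\bxi(\ell,\lambda))$. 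Since both maps carry highest weight vector to highest weight vector and both modules are cyclic, the two surjections are mutually inverse and the isomorphism follows. As a safeguard, should the reverse surjection prove awkward, the classical $\overline{\lie g}$--module decomposition of $\D(\ell,\lambda)$ from \cite{FoL06} bounds $\dim V(\bxi(\ell,\lambda))\le\dim\D(\ell,\lambda)$ and already upgrades the first surjection to an isomorphism.

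The main obstacle lies in the relation--matching in the hyperspecial case. Every relation involves a single root $\alpha$, so checking it localizes to the rank one current subalgebra attached to $\pm\alpha$; but unlike the special twisted current algebras, where this subalgebra is always the familiar $\mathfrak{sl}_2\tensor\bc[t]$, here one must work inside the rank one twisted current algebra of type $\tt A_2^{(2)}$, for which the Garland--type divided power relations interact nontrivially with the twisting and with the distinction between short and long roots. I expect the heart of the proof to be a careful rank one analysis showing that, for the fat hook partition $\xi^{\alpha}(\ell,\lambda)$, the relations defining $V(\bxi(\ell,\lambda))$ and those defining $\D(\ell,\lambda)$ coincide degree by degree. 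The explicit two--rectangle shape of the fat hook, with its top rectangle of width $\ell$ and a single short row of length $r_\alpha$, is precisely what makes both families of relations collapse to the same generating set of the annihilating ideal, thereby forcing the two presentations to agree.
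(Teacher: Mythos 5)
Your proposal is correct and takes essentially the same route as the paper's own proof (Theorems~\ref{dem2} and~\ref{dem3}): the identical special fat hook partition $\bxi(\ell,\lambda)$ built from $\lambda(\alpha^{\vee})=(s_\alpha-1)\ell+m_\alpha$, a surjection $V(\bxi(\ell,\lambda))\twoheadrightarrow\D(\ell,\lambda)$ obtained by checking the Demazure generator against the simplified presentation of Theorem~\ref{genmax2}, the reverse surjection obtained by verifying Mathieu's relations (Theorem~\ref{demq}) in $V(\bxi(\ell,\lambda))$ via Corollary~\ref{cthird}, and the conclusion that the two generator-preserving surjections between cyclic modules are mutually inverse. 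The only cosmetic difference is that the paper records the Demazure presentation from \cite{M88} where you cite \cite{CIK14,FoL06}.
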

As a consequence we see that the defining relations of twisted Demazure modules given by Mathieu in \cite{M88} can be greatly simplified.\par
Feigin and Loktev introduced the notion of a fusion product of graded representations of the current algebra \cite{FL99}. It was later proved in \cite{CSVW14} and \cite{FoL07} that a Demazure module is a fusion product of ``smaller" Demazure modules. The main problem of defining fusion products for twisted current algebras is that $\CG$ is not stabilized by the Lie algebra homomorphism $\overline{\lie g}[t]\longrightarrow \overline{\lie g}[t], t^k\mapsto (t+a)^k, a\in \bc^{\times}$. This is a major obstacle to generalizing such techniques to the setting of twisted current algebras. For these reasons, new techniques are needed and our approach to this problem is to use untwisted modules and the graded Lie algebra structure on $\CG$ induced by the derivation $d$. Again for the precise definition of the ingredients and a slightly more general formulation see Theorem~\ref{mainthmsection6}.
\begin{theom}
Let $\lambda=\ell\lambda_1+\cdots+\ell\lambda_p+\lambda_{0}$ be an arbitrary decomposition, where $\lambda_k\in P^+$ for $0\leq k \leq p$. Then we have an isomorphism of $\CG$--modules 
$$V(\bxi(\ell,\lambda))\cong \D_{\overline{\lie g}}(\ell,\ell\lambda_1)*\cdots* \D_{\overline{\lie g}}(\ell,\ell\lambda_{p})*V(\bxi(\ell,\lambda_{0}))$$
\end{theom}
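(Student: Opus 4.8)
My approach is to prove the decomposition by induction on the number $p$ of "level-$\ell$ pieces" in the decomposition of $\lambda$, reducing the general case to the key single-step statement
$$V(\bxi(\ell,\lambda))\cong \D_{\overline{\lie g}}(\ell,\ell\lambda_1)*V(\bxi(\ell,\lambda-\ell\lambda_1)),$$
so that the heart of the matter is proving a fusion product decomposition that splits off one untwisted Demazure module at a time. Since the obstruction noted in the introduction is that $\CG$ is \emph{not} preserved by the shift $t^k\mapsto(t+a)^k$, I cannot define the fusion product intrinsically on the twisted side. Instead, following the strategy signalled in the text, I would work with the ambient untwisted current algebra $\overline{\lie g}[t]$ and exploit the $d$-grading on $\CG$: the plan is to realize $V(\bxi(\ell,\lambda))$ as a suitable graded quotient (or associated graded) of an untwisted fusion product, and then match gradings and characters.

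First I would set up the generators-and-relations machinery. Using the simplified presentation for special fat hook partitions (the first displayed Theorem reduces $\D(\ell,\lambda)$ to $V(\bxi(\ell,\lambda))$, so I have clean defining relations), I would identify the cyclic generator $v_{\bxi}$ and write down the annihilating ideal. The fusion product $\D_{\overline{\lie g}}(\ell,\ell\lambda_1)*V(\bxi(\ell,\lambda_0))$ is, by the Feigin--Loktev construction, the associated graded of a tensor product of evaluation-type modules at distinct points; it is generated by the tensor of highest weight vectors with a filtration coming from the $d$-grading. The key algebraic step is to construct a surjective $\CG$-module homomorphism from $V(\bxi(\ell,\lambda))$ onto the fusion product by checking that the image of $v_{\bxi}$ satisfies all the defining relations of $V(\bxi(\ell,\lambda))$ in the fusion product; this amounts to verifying that the relations degenerate correctly when the evaluation parameters collide, which is where the compatibility condition on $\bxi$ and the special fat hook structure must be used.

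The surjection having been established, I would close the argument by a dimension/character count: since fusion products have the same character as the corresponding tensor product, and since both sides decompose compatibly as $\overline{\lie g}$-modules, I would compare graded characters to conclude that the surjection is an isomorphism. Here I expect to lean on the fusion product decomposition already known for untwisted Demazure modules (cited as \cite{CSVW14,FoL07}) together with the Theorem identifying $V(\bxi(\ell,\lambda))$ with a twisted Demazure module, so that dimension equality follows from the known untwisted decomposition transported through the $d$-grading.

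\textbf{Main obstacle.} The hard part will be the degeneration step in the middle paragraph: showing that the twisted relations survive the collision of evaluation parameters and produce \emph{exactly} the defining ideal of $V(\bxi(\ell,\lambda))$, neither too many nor too few relations. Because $\CG$ is only a fixed-point subalgebra and is not shift-stable, the usual untwisted argument for propagating relations under the fusion filtration does not apply verbatim; I anticipate that handling the rank-one $\tt A_2^{(2)}$ subalgebra—whose relations are genuinely different from the $\mathfrak{sl}_2$ case—will require the bulk of the technical work, and that controlling the graded pieces of the associated graded of the untwisted tensor product against the twisted grading induced by $d$ is where the subtlety lies.
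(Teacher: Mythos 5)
Your plan has the same right half as the paper's actual proof of Theorem~\ref{mainthmsection6}: there, too, one shows that the cyclic generator $\mathbf v$ of the fusion product (cyclicity is not free in the twisted setting -- it is Proposition~\ref{cmf}, resting on the surjectivity of $\Psi_{\boz,N}$ from Lemma~\ref{surm}) satisfies the simplified defining relations of Theorems~\ref{dem2} and~\ref{dem3}, giving a surjection $V(\bxi(\ell,\lambda))\twoheadrightarrow \D_{\overline{\lie g}}(\ell,\ell\overline{\lambda}_1)*\cdots*V(\bxi(\ell,\lambda_0))$, and then one concludes by a dimension count. One correction on the mechanics, though: nothing ``collides''. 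The parameters stay fixed with $z_i^m\neq z_j^m$, and each relation is verified by choosing coefficients $c_k$ so that the element $x_{-\alpha+d_\alpha s_\alpha\delta}-\sum_{k} c_k x_{-\alpha+d_\alpha k\delta}$ (and its analogue for the vectors $x_{-\frac{\alpha}{2}+(k+\frac{1}{2})\delta}$) annihilates every untwisted factor, so that only the action on the last, genuinely twisted factor survives; the existence of such $c_k$ is a linear interpolation problem, solved by the invertibility of the confluent--Vandermonde--type matrices of Lemma~\ref{invmat}. This same device handles the rank--one $\tt A_2^{(2)}$ directions you single out as the main obstacle, so that obstacle is real but is dispatched by linear algebra, not by a degeneration argument.

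The genuine gap is your closing step. You propose to get the dimension equality from ``the known untwisted fusion decomposition transported through the $d$-grading'', but that is circular: the assertion that the $d$-associated-graded of $\D_{\overline{\lie g}}(\ell,\ell\overline{\lambda})$ is the twisted Demazure module $\D(\ell,\ell\lambda)$ is itself a corollary of the theorem you are proving, and a priori there is no relation between the dimensions of these two modules -- they are modules for different algebras (note also that the twisted side is only a $\lie g$-module, not an $\overline{\lie g}$-module, so ``decompose compatibly as $\overline{\lie g}$-modules'' is not meaningful). What the paper actually feeds into the count are two independent, nontrivial inputs absent from your outline: (i) Theorem~\ref{tensordec}, the decomposition $\D(\ell,\lambda)\cong_{\lie g}\D(\ell,\ell\lambda_1)\otimes\cdots\otimes\D(\ell,\lambda_0)$ of \emph{twisted} Demazure modules, proved by Demazure-operator and character calculus (Lemmas~\ref{tensordeclemma} and~\ref{length}); and (ii) Lemma~\ref{lemwe}, the equality $\dim \D_{\overline{\lie g}}(\ell,\ell\overline{\lambda})=\dim\D(\ell,\ell\lambda)$, proved by induction on $\ell$ from the fact that both families of characters solve the untwisted, respectively twisted, $Q$-systems. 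Since the fusion product has the dimension of the underlying tensor product, (ii) converts the untwisted factors' dimensions into twisted ones and (i) identifies the resulting product with $\dim\D(\ell,\lambda)$; without both, the surjection cannot be promoted to an isomorphism. A secondary flaw: your induction on $p$ reduces to $V(\bxi(\ell,\lambda))\cong\D_{\overline{\lie g}}(\ell,\ell\lambda_1)*V(\bxi(\ell,\lambda-\ell\lambda_1))$, but re-substituting the induction hypothesis yields an \emph{iterated} fusion $\D_{\overline{\lie g}}(\ell,\ell\lambda_1)*\bigl(\D_{\overline{\lie g}}(\ell,\ell\lambda_2)*\cdots\bigr)$, and identifying this with the multi-factor fusion in the statement requires an associativity property of the twisted fusion product that is not available at this point; the paper avoids the issue by checking the relations for all $p$ factors simultaneously, so no induction is needed.
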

As a corollary of the previous theorem we obtain that certain twisted Demazure modules can be obtained by taking the associated graded modules of untwisted Demazure modules, which was previously known for local Weyl modules for special twisted current algebras \cite{FK11}.
Another application of our theorem is the following semi--infinite fusion product construction:
\begin{theom}
Let $\lambda$ be a dominant integral $\lie g$--weight such that $\Lambda=\ell \Lambda_0+\lambda$ is a dominant integral $\widehat{\lie g}$--weight. Furthermore, let $\bold{V}^\infty_{\ell, \lambda}$ be the direct limit of 
$$\ev_0^{*}V(\lambda)\hookrightarrow \D_{\overline{\lie g}}(\ell, \ell\theta)*\ev_0^{*}V(\lambda)\hookrightarrow \D_{\overline{\lie g}}(\ell, \ell\theta)*\D_{\overline{\lie g}}(\ell, \ell\theta)*\ev_0^{*}V(\lambda)\hookrightarrow \cdots$$
Then $\widehat{V}(\Lambda) \ \text{and} \ \bold{V}^\infty_{\ell, \lambda}$ are isomorphic as $\CG$--modules.
\end{theom}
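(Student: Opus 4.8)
The plan is to reduce the statement to the classical principle that an irreducible integrable highest weight module is exhausted by an increasing chain of Demazure modules, and to recognize each finite stage of the direct system as a twisted Demazure module by means of Theorem 1 and Theorem 2. Write $\tau$ for the translation in the affine Weyl group $\widehat W$ attached to a suitable multiple of $\theta^\vee$, normalized so that, projected to the finite weight lattice, the extremal weight $\tau^m w_0\Lambda$ has $\overline{\lie g}$--part $\lambda+m\ell\theta$. With this normalization the twisted Demazure submodule $\D(\ell,\lambda+m\ell\theta)\subseteq\widehat V(\Lambda)$ generated by the corresponding extremal weight space is well defined. The hypothesis that $\Lambda=\ell\Lambda_0+\lambda$ is dominant integral forces $\lambda$ to be ``small'' at level $\ell$, which is exactly the condition under which the base case $m=0$ degenerates: the $\CG$--module $\CG\cdot v_\Lambda$ generated by the highest weight vector is $U(\lie g)v_\Lambda\cong\ev_0^*V(\lambda)$, and by Theorem 1 this equals $\D(\ell,\lambda)\cong V(\bxi(\ell,\lambda))$.

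First I would identify the $m$--th stage of the direct system with a twisted Demazure module. Applying Theorem 2 to the decomposition $\lambda+m\ell\theta=\underbrace{\ell\theta+\cdots+\ell\theta}_{m}+\lambda$, with each $\ell\theta$ in the role of some $\ell\lambda_k$ and $\lambda$ in the role of $\lambda_0$, produces a $\CG$--isomorphism
$$\D_{\overline{\lie g}}(\ell,\ell\theta)^{*m}*\ev_0^*V(\lambda)\;\cong\;V(\bxi(\ell,\lambda+m\ell\theta)),$$
and Theorem 1 then gives $V(\bxi(\ell,\lambda+m\ell\theta))\cong\D(\ell,\lambda+m\ell\theta)$. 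Thus every term of the chain is, as a graded $\CG$--module, a twisted Demazure module, and the internal grading of the fusion product matches the $d$--grading on the relevant extremal portion of $\widehat V(\Lambda)$.

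Next I would fit these isomorphisms into $\widehat V(\Lambda)$ compatibly. The defining injections $\D_{\overline{\lie g}}(\ell,\ell\theta)^{*m}*\ev_0^*V(\lambda)\hookrightarrow\D_{\overline{\lie g}}(\ell,\ell\theta)^{*(m+1)}*\ev_0^*V(\lambda)$ must be matched with the inclusions of Demazure submodules $\D(\ell,\lambda+m\ell\theta)\hookrightarrow\D(\ell,\lambda+(m+1)\ell\theta)$ inside $\widehat V(\Lambda)$. Since the smaller Demazure module is the $\CG$--submodule generated by a single extremal weight vector lying in the larger one, it suffices to verify that the fusion--product injection sends the cyclic generator of the $m$--th stage to the corresponding extremal vector of the $(m+1)$--st stage; this is a weight and cyclicity check using the explicit generators of the fusion product. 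Passing to the direct limit then yields a $\CG$--embedding $\varinjlim V_m\hookrightarrow\widehat V(\Lambda)$ with image $\bigcup_m\D(\ell,\lambda+m\ell\theta)$. Exhaustion finishes the argument: because translations by multiples of $\theta^\vee$ are cofinal in $\widehat W$, every $\D(w\Lambda)$ lies in some $\D(\tau^m w_0\Lambda)$, so $\bigcup_m\D(\ell,\lambda+m\ell\theta)=\bigcup_w\D(w\Lambda)=\widehat V(\Lambda)$, giving $\bold V^\infty_{\ell,\lambda}\cong\widehat V(\Lambda)$.

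The main obstacle I anticipate is the compatibility step, precisely because—as stressed in the introduction—$\CG$ is not stable under the shift $t^k\mapsto(t+a)^k$, so neither the fusion product nor its connecting maps are transported from the affine picture in the naive way. One must check by hand that the abstractly defined fusion injections coincide with the Demazure inclusions under the isomorphisms of Theorem 2, and in particular that they are genuine degree--preserving $\CG$--homomorphisms. A secondary difficulty, special to the hyperspecial type $\tt A_{2n}^{(2)}$ setting, is pinning down the translation $\tau$ and the correct multiple of $\theta$ governing the building block $\D_{\overline{\lie g}}(\ell,\ell\theta)$: the short/long root combinatorics of the twisted affine Weyl group enter here, and they must be made compatible with the normalization under which Theorems 1 and 2 are stated.
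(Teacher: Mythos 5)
Your proposal is correct and takes essentially the same approach as the paper: the paper's entire proof is the isomorphism $\D(\ell,(N+1)\ell\theta+\lambda)\cong \D_{\overline{\lie g}}(\ell,\ell\theta)\ast\D(\ell,N\ell\theta+\lambda)$ from Theorem~\ref{mainthmsection6}, followed by a citation of \cite[Theorem 9]{FoL07} for precisely the argument you outline — identifying each stage of the direct system with a twisted Demazure submodule of $\widehat V(\Lambda)$, matching the fusion injections $v\mapsto u\otimes v$ with the Demazure inclusions, and concluding by exhaustion of $\widehat V(\Lambda)$ by the cofinal chain of these Demazure modules. The compatibility step you flag as the main obstacle is exactly the content of the cited Fourier--Littelmann argument rather than something the paper proves afresh.
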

Finally, we prove the following; for details we refer to Section~\ref{section7}.
\begin{theom}
The twisted $Q$--system extends to a non-canonical short exact sequence of fusion products of twisted Demazure modules.

\end{theom}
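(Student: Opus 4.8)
The plan is to realize the quadratic relation of the twisted $Q$--system from \cite{HKOTT02} as a short exact sequence of graded $\CG$--modules, each term of which is, by the preceding results, simultaneously a fusion product of twisted Demazure modules and a single module of the form $V(\bxi)$. First I would recall that, as a $\CG$--module, each twisted Kirillov--Reshetikhin module is isomorphic to a twisted Demazure module $\D(\ell,\lambda)$ for an appropriate pair $(\ell,\lambda)$, hence to some $V(\bxi(\ell,\lambda))$ by the first main theorem. Writing the $Q$--system in the additive normalization $(Q^{(a)}_m)^2 = Q^{(a)}_{m-1}Q^{(a)}_{m+1} + (\text{source term})$, the target of the sought sequence will be the fusion product corresponding to $Q^{(a)}_{m-1}Q^{(a)}_{m+1}$, the middle term the fusion product corresponding to $(Q^{(a)}_m)^2$, and the kernel the fusion product matching the source term, up to a grading shift. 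The fusion product decomposition theorem then identifies each of these products with a single $V(\bxi)$ on the nose.

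Next I would construct the surjection from the middle term onto the target. Both are cyclic $\CG$--modules, so it suffices to send the cyclic generator of the middle term to that of the target and to check that the defining relations of the target are satisfied by its image. Here the simplified presentation for special fat hook partitions is essential: it reduces the relations to be verified to a short explicit list, and the required containments of relation ideals follow from the elementary combinatorics of the weights $m\omega_a$ and $(m\pm 1)\omega_a$. This produces a surjective graded $\CG$--module map, whose kernel I then analyze.

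The heart of the argument is identifying this kernel. I would exhibit a distinguished weight vector $v$ in the middle term, show that it generates the kernel, and construct a surjection onto the kernel from the module $A$ predicted by the source term by mapping the generator of $A$ to $v$ and verifying --- again via the simplified presentation and the fusion product decomposition --- that $v$ satisfies exactly the defining relations of $A$, with the appropriate degree shift (this shift is the origin of the word ``non--canonical''). At this point the character-level twisted $Q$--system, available from \cite{HKOTT02,CM06,CM07}, forces $\dim A = \dim(\ker)$, so the surjection $A \to \ker$ is an isomorphism and the sequence is exact.

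The step I expect to be the main obstacle is precisely this identification of the kernel as a fusion product of twisted Demazure modules. In the hyperspecial case $\CG$ is not the fixed-point subalgebra of $\overline{\lie g}\otimes \bc[t]$, so the generator $v$ and its relations cannot be read off from an untwisted computation directly; every statement about fusion products must instead be routed through untwisted Demazure modules as in the second main theorem, while keeping careful track of the rank-one $\tt A_2^{(2)}$ subalgebra that governs the new relations. Showing that $v$ satisfies \emph{no more} relations than those defining $A$ --- equivalently, that the surjection $A \to \ker$ is not proper --- is exactly where the character identity is indispensable and where the grading shift must be pinned down precisely.
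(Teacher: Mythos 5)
Your overall architecture matches the paper's proof of Theorem~\ref{qsystemst}: the middle term is the single twisted Demazure module $\D(\ell,2\ell\omega_i)$ (which is the fusion square by Theorem~\ref{mainthmsection6}), the surjection $\pi$ onto $\D_{\overline{\lie g}}(\ell+1,(\ell+1)\overline{\omega}_i)*\D(\ell-1,(\ell-1)\omega_i)$ is constructed essentially as you describe (Lemma~\ref{piex}), and the final step is a dimension count against the character--level $Q$--system (Proposition~\ref{qsysteml} combined with Theorem~\ref{tensordec} and Lemma~\ref{lemwe}). The genuine gap is in your treatment of the kernel. Your plan is to (i) show that the distinguished vector $v=(x_{-\beta})^{\ell}v_{\ell,2\ell\omega_i}$ generates $\ker\pi$, and (ii) verify that $v$ satisfies the defining relations of the source--term module $A=K^{*}_{i,\ell}$, thereby obtaining a surjection $A\twoheadrightarrow\ker\pi$ which the character identity then promotes to an isomorphism. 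But you give no method for (i), and without it the logic does not close: step (ii) alone yields only $A\twoheadrightarrow\bu(\CG)v\subseteq\ker\pi$, and the character identity $\dim A=\dim\ker\pi$ is perfectly consistent with $\bu(\CG)v$ being simultaneously a proper quotient of $A$ and a proper submodule of $\ker\pi$. A dimension count can convert a surjection into an isomorphism only when the surjection is known to hit all of $\ker\pi$, and it can convert an injection into an isomorphism only when the injection is known to start from all of $A$; your plan secures neither, because (i) is exactly the statement that cannot be established before the count is available.

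The paper escapes this circularity by reversing the direction of the comparison map. In Lemma~\ref{iotax} one observes that $v$ spans the extremal weight line $\widehat{V}(\Lambda)_{w_0w_{\beta}w(\Lambda)}$ inside the irreducible highest weight module, where $\beta$ is the affine real root fixed in Lemma~\ref{piex}; hence the submodule $\bu(\CG)v$ is itself a $\CG$--stable Demazure module, namely $\D\big(\ell,\ell\sum_{p\in\Theta(i)}\omega_p\big)$, with no relation--checking at all. Crucially, this identification is an isomorphism, not merely a surjection, so via Theorem~\ref{mainthmsection6} it produces an \emph{injection} $K^{*}_{i,\ell}\hookrightarrow\ker\pi$. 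The dimension count then forces this injection to be onto $\ker\pi$, and it is only at that point, a posteriori, that one learns your claim (i) that $v$ generates the kernel. The missing idea is therefore the extremal--weight/Demazure--submodule argument: injectivity must come from structure theory inside $\widehat{V}(\Lambda)$ and surjectivity from the character count, not the other way around.
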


Our paper is organized as follows. Section~\ref{section1} establishes the basic notation and elementary
results needed in the rest of the paper. In Section~\ref{section2}, we define the modules $V(\bxi)$ for the hyperspecial twisted current algebra, where
$\bxi$ is a tuple of partitions indexed by the positive roots. We give three equivalent presentations of these modules. In Section~\ref{section3}, we consider a particular
choice of partitions and relate the corresponding module to twisted Demazure modules. Moreover, we show that the defining relations of these modules
can be greatly simplified. In Section~\ref{section4} we prove the same results for the special twisted current algebras.
In Section~\ref{section5} we give a tensor product decomposition of twisted Demazure modules and in Section~\ref{section6} we introduce the notion of fusion products for twisted modules and prove a fusion product decomposition of twisted Demazure modules. In Section~\ref{section7}, we show that there exists a short exact sequence of graded $\CG$--modules corresponding to the twisted $Q$-–system defined in \cite{HKOTT02}. 

\textit{Acknowledgement: Part of this work was done when both authors were visiting the Centre de recherches mathématiques at the University of Montreal
for the semester program on New directions in Lie theory. We thank the organizers of the semester for this opportunity. 
}

\section{Preliminaries}\label{section1}
\subsection{} We denote the set of complex numbers by $\bc$ and, respectively, the set of integers, non--negative integers, and positive integers  by $\bz$, $\bz_+$, and $\bn$. Moreover, let $\bold N=\{(r,s): r,s\in \frac{1}{2}\bn, s+r\in\bn \}$. Unless otherwise stated, all the vector spaces considered in this paper are $\bc$-vector spaces and $\otimes$ stands for $\otimes_\bc$.


\subsection{}
For a Lie algebra $\lie a$, we let $\bu(\lie a)$ be the universal enveloping algebra of $\lie a$ and denote by $\lie a[t]=\lie a\otimes \bc[t]$ the current algebra associated to $\lie a$.  If, in addition, $\lie a$ is $\mathbb Z_+$-graded, then $\bu(\lie a )$ acquires the unique compatible $\mathbb Z_+$-graded algebra structure. We shall be interested in $\mathbb Z$-graded  representations $V=\oplus_{r\in\mathbb Z} V[r]$ of  $\mathbb Z_+$-graded Lie algebras $\lie a=\oplus_{r\in\bz_+} \lie a[r]$. Clearly, $\lie a[0]$ -- the homogeneous  component of $\lie a$ of grade zero -- is a Lie subalgebra  of $\lie a$ and if $V$ is a $\mathbb Z$-graded representation, then every homogeneous component $V[r]$ is a $\lie a[0]$--module. A morphism between graded $\lie a$-representations is a grade preserving map of $\lie a$-modules.


\subsection{}  We refer to \cite{K90} for the general theory of affine Lie algebras. Throughout, $\widehat{A}$ will denote an indecomposable affine Cartan matrix,  and $\widehat S$  will denote the corresponding Dynkin diagram with the labeling of vertices as in Table Aff2 from \cite[pg.54--55]{K90}. 
Let $S$ be the Dynkin diagram obtained from $\widehat S$ by dropping the zero node and let $A$ be the Cartan matrix, whose Dynkin diagram is $S$.

Let $\widehat{\lie g}$ and $\lie g$  be the  affine Lie algebra and the finite--dimensional  algebra associated to $\widehat A$ and $A$, respectively. We shall realize $\lie g$ as a subalgebra of $\widehat{\lie g}$. We fix $\lie h\subset \widehat{\lie h}$ Cartan subalgebras of $\lie g $ and respectively $\widehat{\lie g}$. We denote by $\widehat{R}$ and, respectively,  $R$ the set of roots of $\widehat{\lie g}$ with respect to $\widehat{\lie h}$, and the set of roots of $\lie g $ with respect to ${\lie h}$. We fix $\widehat\Delta=\{\alpha_0,\dots,\alpha_n\}$  a basis  for $\widehat R$ such that $\Delta=\{\alpha_1,\dots,\alpha_n\}$ is a basis for $R$. The corresponding sets of positive and negative roots are denoted as usual by $\widehat R^\pm$ and respectively $R^\pm$. For $\alpha\in \widehat{R}$, let $\alpha^{\vee}$ be the corresponding coroot. We fix  $d\in \widehat{\lie h}$ such that $\alpha_0(d)=1$ and $\alpha_i(d)=0$ for $i\neq 0$; $d$ is called the scaling element and it is unique modulo the center of $\widehat{\lie g}$. For $1\le i\leq n$, define  $\omega_i\in\H^*$ by $\omega_i(\a_j^\vee)=\d_{i,j}$, for $1\leq j\leq n$, where $\d_{i,j}$ is Kronecker's delta symbol. The element $\omega_i$ is the fundamental weight of $\G$ corresponding to $\a_i^\vee$. We also define $\Lambda_0\in \Haff^*$ by $\Lambda_0(\a_j^\vee)=\d_{0,j}$, for $0\leq j\leq n$, and $\Lambda_0(d)=0$. The element $\Lambda_0$ is the fundamental weight of $\Gaff$ corresponding to $\a_0^{\vee}$. Let $(,)$ be the standard invariant form on $\widehat{\lie h}^{*}$ and for $\alpha\in \widehat{R}^+$ we set 
$$d_{\alpha}=\frac{(\alpha,\alpha)}{2},\ d_{\alpha_i}:=d_i.$$

Let $R_\ell$ and $R_s$ denote respectively the subsets of $R$ consisting of the long and short roots and denote by $R_\ell^\pm, R_s^\pm$ the corresponding subsets of positive and negative roots. 
In this paper we are mainly interested in twisted affine Lie algebras. We set
$$m=\begin{cases}
2,& \text{if $\widehat{\lie g}$ is of type $\tt A^{(2)}_{2n}\ (n\geq 1), \tt A^{(2)}_{2n-1}\ (n\geq 3), \tt D^{(2)}_{n+1}\ (n\geq 4)\mbox{ or }\tt E^{(2)}_{6}$}\\
3,& \text{if $\widehat{\lie g}$ is of type $\tt D^{(3)}_{4}$.} 
\end{cases}
$$
Note that, $d_\alpha=m$ if $\alpha$ is long and $1$ if $\alpha$ is short. We recall the root system of twisted affine algebras.
If $\d$ denotes the unique non-divisible positive imaginary root in $\widehat{R}$, then we have $\widehat{R}=\widehat{R}^+\cup\widehat{R}^-$, where $\widehat{R}^-=-\widehat{R}^+$, $\widehat R^+ =\widehat R^+_{\rm {re}}\cup \widehat R^+_{\rm{im}}$, $\widehat R^+_{\rm{im}} =\mathbb N\delta$, and
$$\widehat{R}^+_{\rm{re}}= R^+\cup(R_s+\mathbb N\delta)\cup(R_\ell+m\mathbb N\delta),\ \mbox {if $\widehat{\lie g}$ is not of type $\tt A^{(2)}_{2n}$},$$
and else
$$\widehat{R}^+_{\rm{re}}=R^+\cup(R_s+\mathbb N\delta)\cup(R_\ell+2\mathbb N\delta)\cup\frac12(R_\ell+(2\mathbb Z_++1)\delta).$$
For $\tt A^{(2)}_{2}$, by convention, $R_s+\mathbb N\delta=\emptyset$.

We also need to consider the set 
$$\widehat R_{\rm re}(\pm)=R^\pm\cup(R_s^\pm+ \bn\delta)\cup(R_\ell^\pm+m\mathbb N\delta),\ \mbox {if $\widehat{\lie g}$ is not of type $\tt A^{(2)}_{2n}$}$$

and else

$$\widehat R_{\rm re}(\pm)=R^\pm\cup(R_s^\pm+ \bn\delta)\cup(R_\ell^\pm+2\mathbb N\delta)\cup\frac12(R_\ell^\pm+(2\mathbb Z_++1)\delta).$$
Remark that $\widehat R_{\rm re}(+)\cup \widehat R_{\rm re}(-)= \widehat{R}^+_{\rm{re}}\cup R^-$.


\subsection{} 

Let $Q=\oplus_{i=1}^n \bz \a_i$ be the root lattice of $R$ and let $\bar{Q}=\oplus_{i=1}^{n-1} \bz \a_i\oplus \frac{1}{2}\bz \a_n$. Let $Q^+$ and $\bar{Q}^+$ be the respective $\bz_+$--cones. 
The weight lattice (resp. coweight lattice) of $R$ is denoted by $P$ (resp. $P^\vee$) and the cone of dominant weights is denoted by $P^+$. Let $\widehat W$ and $W$ be the Weyl groups of  $\widehat{\lie g}$ and $\lie g$ respectively, then 
$\widehat{W}=W\ltimes t_{\bar{Q}}$ if $\widehat{\lie g}$ is of type $\tt A^{(2)}_{2n}$ and else $\widehat{W}=W\ltimes t_{Q}$, where the translation $t_\mu\in \widehat{W}$ for an element $\mu \in Q$ (resp. $\mu \in \bar Q$) is defined by
$$t_{\mu}(b\Lambda_0+\lambda)=b\Lambda_0+\lambda+b\mu \ \ \ (\text{mod} \ \mathbb{C}\delta),\ \lambda\in \lie h^{*}\oplus \bc \delta,\ b\in \mathbb{C}.$$
For a real root $\alpha$ we denote by $w_{\alpha}$ the reflection associated to $\alpha$, then $\widehat W$ respectively $W$ is generated by the reflections associated to the roots $\widehat{\Delta}$ respectively $\Delta$. Furthermore, let $w_0$ be the unique longest element in $W$. The \textit{extended affine Weyl group} $\widetilde{W}$ of $\widehat{\lie g}$ is the semidirect product $\widetilde{W}=W\ltimes t_L$, where $L$ is the coweight lattice if $\widehat{\lie g}$ is of type $\tt A^{(2)}_{2n}$ and else the weight lattice. For an element $\mu \in L$, $t_\mu\in \widetilde{W}$ is defined similarly.
The affine Weyl group is a normal subgroup of $\widetilde{W}$ and the group $\widetilde{W}$ is the semidirect product
$$\widetilde{W}=\Sigma\ltimes \widehat{W},$$
where $\Sigma=\{w\in \widetilde{W}\mid w(\widehat{\Delta})\subseteq \widehat{\Delta}\}$ and each element in $\Sigma$ is an automorphism of the Dynkin diagram. For more details we refer to \cite{Wa01}.
Finally, we remark that $t_{-\mu}\in\widehat{W}$ for all $\mu\in P^+$ if $\widehat{\lie g}$ is of type $\tt A^{(2)}_{2n}$ and else $t_{-\mu}\in\widetilde{W}$ for all $\mu\in P^+$.

\subsection{}
Given $\alpha\in \widehat R^+$ let $\widehat{\lie g}_\alpha\subset\widehat{\lie g} $ be the corresponding root space; note that  $\widehat{\lie g}_\alpha \subset \lie g$ if $ \alpha\in R$. For a non-imaginary root $\alpha$ we denote by $x_{\alpha}$ the generator of $\widehat{\lie g}_\alpha$. We  define several subalgebras of $\widehat{\lie g}$ that will be needed in the rest of the paper. Let $\widehat{ \lie b}$ be the Borel subalgebra corresponding to $\widehat{R}^+$, and let $\widehat{\lie n}^+$ be its nilpotent radical,
 $$\widehat{\lie b}=\widehat{\lie h}\oplus \widehat{\lie n}^+,\ \  \ \ \widehat{\lie n}^\pm =\oplus_{\alpha\in\widehat R^+}\widehat{\lie g}_{\pm \alpha}.$$ The subalgebras $\lie b$ and $\lie n^\pm$ of $\lie g$ are analogously  defined. 

Consider the algebra
$$\lie k=(\lie h\oplus\mathbb Cd)\oplus\widehat{\lie n}^+\oplus\lie n^-.$$ 
 
The twisted current algebra $\CG$ is defined to be the ideal of $\lie k$ defined as
 $$
 \CG=\lie h\oplus\widehat{\lie n}^+\oplus\lie n^-.
 $$
and has a triangular decomposition $$\CG=\lie C\lie n^+\oplus \CH \oplus\lie C\lie n^-,$$ where  
$$\CH=\CH_+\oplus \H, \ \ \ \CH_+=\bigoplus_{k>0}\widehat{\lie g}_{k\delta},\ \ \ \lie C\lie n^\pm=\bigoplus_{\alpha\in \widehat{R}_{\rm re}(\pm)}\widehat{\lie g}_{\pm\alpha}.$$ Note that $\CH$ is an abelian Lie subalgebra.
Following \cite{CIK14} we call $\CG$ the \textit{hyperspecial} twisted current algebra if $\widehat{\lie g}$ is of type $\tt A_{2n}^{(2)}$ and else the \textit{special} twisted current algebra. 
The definition of the hyperspecial twisted current algebra is different from the notion of twisted current algebra of type $\tt A_{2n}^{(2)}$ that exists in the literature (for example, as in \cite{FK11}).
The differences are clarified in \cite[Remark 2.5]{CIK14}.


\subsection{}\label{ev0} 
 
 The element $d$ defines a $\mathbb Z_+$--graded Lie algebra structure on $\CG$: for $\alpha\in \widehat R$ we say that $\widehat{\lie g}_\alpha$ has grade $k$ if $$[d,x_\alpha]=kx_\alpha$$ 
 or, equivalently, if $\alpha(d)=k$. Remark that since $\delta(d)\in\{1,2\}$ the eigenvalues of $d$ are all integers and if $\widehat{\lie g}_\alpha\subset\CG$, then the eigenvalues are non--negative integers.  With respect to this grading, the zero homogeneous component of the twsited current algebra is  $\CG[0]= \lie g$ and the subspace spanned by the positive homogeneous components is an ideal denoted by $  \CG_+$. We have a short exact sequence of Lie algebras,
$$0\to \CG_+\to\CG \stackrel{\ev_0}{\longrightarrow} \lie g\to 0.$$
Note that this exact sequence is right-split but not left-split as a sequence of Lie algebras but it is a split sequence as a sequence of $\lie g$-modules. Clearly the pull--back of any $\lie g$--module $V$ by  $\ev_0$ defines the structure of a graded $\CG$--module on $V$ and we denote this module by $\ev_0^*V$.

\subsection{}  
For $\lambda\in P^+$, denote  by $V(\lambda)$ the irreducible finite--dimensional $\lie g$--module generated by an element $v_\lambda$ with defining relations $$\lie n^+ v_\lambda=0,\ \ hv_\lambda=\lambda(h) v_\lambda,\ \ (x_{-\alpha})^{\lambda(\alpha^{\vee})+1} v_\lambda=0,\ \ h\in\lie h,\ \alpha\in R^+. $$ 
It is well--known that any irreducible $\lie g$--module is isomorphic to $V(\lambda)$ for some $\lambda\in P^+$ and $V(\lambda)\cong V(\mu)$ iff $\lambda=\mu$. Moreover,  any finite--dimensional $\lie g$--module is isomorphic to a direct sum of  modules $V(\lambda)$, $\lambda\in P^+$.  
If $V$ is a $\lie h$--semisimple $\lie g$--module (in particular if $\dim V<\infty$),  we have $$ V=\bigoplus_{\mu\in\lie h^*}V_\mu,\ \ V_\mu=\{v\in V: hv=\mu(h)v,\ \ h\in\lie h\},$$ and we set $\wt V=\{\mu\in\lie h^*: V_\mu\ne 0\}.$  If $\dim V_\mu<\infty$ for all $\mu\in \wt V$, then we define $\ch_{\lie h} V:\lie h^*\to \bz_+$, by sending $\mu\to\dim V_\mu$. If $\wt V$ is a finite set, then $$\ch_{\lie h}V=\sum_{\mu\in\lie h^*}\dim V_\mu e(\mu)\in\bz[P].$$

\subsection{}
The methods we use for hyperspecial twisted current algebras differ from the methods for special twisted current algebras and therefore we shall regard the hyperspecial case separately.\par \textit{Unless otherwise stated, we consider from now on the twisted affine algebra of type $\tt A^{(2)}_{2n}$.} So $A$ is the Cartan matrix of type $\tt A_1$ if $n=1$ and of type $\tt C_n$ if $n\geq 2$. For $n=1$, by convention, $R_\ell=R$ and $R_s=\emptyset$.

We recall an explicit construction of the algebra
$\CG$ as a subalgebra of $\mathcal{L}(\mathfrak{sl}_{2n+1})=\big(\mathfrak{sl}_{2n+1}\otimes \bc[t^{\pm}]\big)$ from \cite[Section 4]{CIK14}. 
We fix a Chevalley basis $\{X^\pm_{i,j}$, $H_i$ ~:~$1\le i\le j\le 2n\}$  for $\lie{sl}_{2n+1}$.
Let $\alpha\in R^+_s$ and $r\in \bz_+$;  $\alpha$ is necessarily of one of the two forms listed below for some $1\le i\le j< n$. We set
\begin{equation*}
\begin{aligned}
&x_{\pm\alpha+r\delta}= X^\pm_{i,j}\otimes t^r+(-1)^{i+j}X^\pm_{2n+1-j,2n+1-i}\otimes (-t)^r, &&\text{for } \alpha=\sum_{s=i}^j\alpha_s,\\
&x_{\pm\alpha+r\delta}=X^{\pm}_{i,2n-j}\otimes t^{r\pm 1}+(-1)^{i+j}X^{\pm}_{j+1,2n+1-i}\otimes (-t)^{r\pm 1}, &&\text{for }  \alpha=\sum_{s=i}^{j}\alpha_s + 2\sum_{s=j+1}^{n-1}\alpha_s+\alpha_n.
\end{aligned}
\end{equation*}
Let  $\alpha\in R^+_\ell$ and $r\in \bz_+$;  $\alpha$ is of the form $2(\alpha_{i}+\cdots +\alpha_n)-\alpha_n$ for some $1\le i\le n$. We set
\begin{equation*}
\begin{aligned}
& x_{\pm\alpha+2r\delta}= X_{i,2n+1-i}^\pm \otimes t^{2r\pm 1}, \\
&x_{\frac12(\pm\alpha+(2r+1)\delta)}= X^\pm_{i,n}\otimes t^{(2r+1\pm 1)/{2}}+(-1)^{i} X^\pm_{n+1, 2n+1-i}\otimes (-t)^{(2r+1\pm 1)/2}. 
\end{aligned}
\end{equation*}
Finally, for $1\le i\le n$,  we set $$h_{i,r\delta}= H_i\otimes t^r+H_{2n+1-i}\otimes (-t)^r.$$ We remark that $\alpha_i^\vee=h_{i,0}$ for $1\le i\le n$.
\subsection{}
The following proposition is needed later in this paper.

\begin{prop}\label{sp2}
We have
\begin{enumerit} 
\item[(i)] For $\alpha\in R^+$, the subalgebra of $\CG$ generated by the elements $$\{x_{\pm\alpha+r\delta}~:~r\in\mathbb{Z}_+, \a+r\delta\in \widehat{R} \}$$   is isomorphic to $\lie{sl}_2[t]$.
\item[(ii)] For $\alpha\in R^+_\ell$,  the subalgebra generated by the elements $$\{x_{\frac 12(\pm\alpha+(2r+1)\delta)}, x_{\pm\alpha+2r\delta}~:~ r\in\mathbb{Z}_+\}$$ is isomorphic to the current algebra of type $\tt A_2^{(2)}$.
\end{enumerit}
\end{prop}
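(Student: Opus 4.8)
The plan is to work directly inside the loop algebra $\mathcal{L}(\mathfrak{sl}_{2n+1})$, using the explicit realization of the generators $x_{\pm\alpha+r\delta}$ and $x_{\frac12(\pm\alpha+(2r+1)\delta)}$ recalled above. In both parts the strategy is the same: exhibit a spanning set for the generated subalgebra $\mathfrak a$ that is visibly closed under the bracket, check that its structure constants match those of the target algebra, and then use linear independence of the spanning set inside $\mathcal{L}(\mathfrak{sl}_{2n+1})$ to upgrade the resulting surjection to an isomorphism.

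For part (i) I would treat the cases separately, since the underlying matrix units differ: the two families of short roots, and the long roots (for $n=1$ only the latter occurs). In each case $x_{\alpha+r\delta}$ is a sum $X^+_{a,b}\otimes t^{*}+\epsilon\, X^+_{a',b'}\otimes(-t)^{*}$ of at most two root vectors of $\mathfrak{sl}_{2n+1}$, with $\epsilon=\pm1$, and the crucial point is that the index supports $\{a,b\}$ and $\{a',b'\}$ are disjoint enough that all relevant cross brackets $[X^+_{a,b},X^{\pm}_{a',b'}]$ vanish; this is a short check on matrix units that must be carried out case by case (for the second short-root family the two index intervals overlap, so one really has to verify that no Kronecker delta fires). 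Granting this, set $E_k=x_{\alpha+k\delta}$, $F_k=x_{-\alpha+k\delta}$ and $\widetilde h_k=(H_{a,b}+(-1)^kH_{a',b'})\otimes t^{k}$, the last arising as a bracket $[E_0,F_k]$. A direct computation then yields
$$[E_r,F_s]=\widetilde h_{r+s},\quad [\widetilde h_r,E_s]=2E_{r+s},\quad [\widetilde h_r,F_s]=-2F_{r+s},\quad [E_r,E_s]=[F_r,F_s]=[\widetilde h_r,\widetilde h_s]=0,$$
so that $e\otimes t^k\mapsto E_k$, $f\otimes t^k\mapsto F_k$, $h\otimes t^k\mapsto\widetilde h_k$ defines a surjective homomorphism $\mathfrak{sl}_2[t]\to\mathfrak a$. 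Since the elements $\{E_k,F_k,\widetilde h_k\}_{k\ge 0}$ are linearly independent in $\mathcal{L}(\mathfrak{sl}_{2n+1})$ — for each power of $t$ the contributing vectors lie in distinct root spaces of $\mathfrak{sl}_{2n+1}$ — the homomorphism is injective, hence an isomorphism.

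For part (ii) I would first observe that every listed generator is supported, as an element of $\mathcal{L}(\mathfrak{sl}_{2n+1})$, on the $3\times3$ submatrix with rows and columns indexed by the three distinct integers $\{i,\,n+1,\,2n+2-i\}$: indeed $x_{\pm\alpha+2r\delta}$ uses $E_{i,2n+2-i}$ and its transpose, while $x_{\frac12(\pm\alpha+(2r+1)\delta)}$ uses $E_{i,n+1}$ and $E_{n+1,2n+2-i}$ together with their transposes, and all iterated brackets remain in this block. This block is a copy of $\mathfrak{sl}_3$ stable under the automorphism defining $\CG$, and the restriction of that automorphism is precisely the non-trivial diagram automorphism of $\mathfrak{sl}_3$. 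Comparing the formulas with the $n=1$ instance of the realization — under the relabelling $1\mapsto i$, $2\mapsto n+1$, $3\mapsto 2n+2-i$, and after an inessential rescaling of Chevalley generators to absorb the sign $(-1)^i$ — the generated subalgebra is exactly the realization of the $\tt A_2^{(2)}$ current algebra inside $\mathcal{L}(\mathfrak{sl}_3)$ from \cite{CIK14}, which gives the claimed isomorphism.

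I expect the main obstacle to be bookkeeping of matrix-unit supports and signs rather than any conceptual difficulty. In part (i) the vanishing of the cross brackets — especially for the second short-root family, where the naive index ranges overlap — is exactly the point on which the clean $\mathfrak{sl}_2[t]$ structure depends. In part (ii) the delicate step is to confirm that the automorphism restricted to the $3\times3$ block is genuinely the $\tt A_2$ diagram automorphism, so that the block reproduces the $\tt A_2^{(2)}$ current algebra and not some other twist; once this is pinned down, the identification with the $n=1$ construction is forced.
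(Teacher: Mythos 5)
Your proposal is correct, but there is nothing in the paper to measure it against: Proposition~\ref{sp2} is stated without proof, being presented as part of the explicit realization of $\CG$ inside $\mathcal{L}(\mathfrak{sl}_{2n+1})$ recalled from \cite{CIK14}. Your argument supplies exactly the verification that the paper leaves implicit, and it goes through. The decisive points all check out: in part (i) the matrix-unit supports of the two summands are disjoint (for the second short-root family the four indices $i,\ j+1,\ 2n+1-j,\ 2n+2-i$ are pairwise distinct, and the Kronecker deltas in the cross brackets would require $i=j+1$, $j=n$ or $i=n+1$, all excluded by $1\le i\le j<n$), so the spanning set $\{E_k,F_k,\widetilde h_k\}$ closes with the structure constants of $\mathfrak{sl}_2[t]$, and linear independence (distinct $t$-powers, and within a fixed $t$-power distinct root spaces of $\mathfrak{sl}_{2n+1}$) makes the surjection injective. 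For long roots the generators are single root vectors $X^{\pm}_{i,2n+1-i}\otimes t^{2r\pm 1}$, so the isomorphism is $e\otimes t^r\mapsto x_{\alpha+2r\delta}$, $f\otimes t^r\mapsto x_{-\alpha+2r\delta}$, $h\otimes t^r\mapsto H\otimes t^{2r}$; this case needs the reindexing by $2r$ that the root condition $\alpha+r\delta\in\widehat R$ imposes, which your case split accommodates. In part (ii) the block reduction also works: the embedding $\mathcal{L}(\mathfrak{sl}_3)\hookrightarrow \mathcal{L}(\mathfrak{sl}_{2n+1})$ into rows and columns $\{i,n+1,2n+2-i\}$, composed for even $i$ with conjugation by $\mathrm{diag}(1,1,-1)$ in the block, carries each $n=1$ generator to $\pm$ the corresponding general generator, hence identifies the two generated subalgebras.

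Two points you should make explicit to close the argument in (ii). First, the target ``current algebra of type $\tt A_2^{(2)}$'' is all of $\CG$ for $n=1$, which contains $\CH=\lie h\oplus \CH_+$, whereas your generating set consists only of real-root vectors; so you must check that the imaginary part is generated, e.g.
\begin{equation*}
\big[x_{\frac12(\alpha+\delta)},\,x_{\frac12(-\alpha+(2r+1)\delta)}\big]=h_{1,(r+1)\delta}
\end{equation*}
in the $n=1$ realization, which settles it in one line. Second, your remark that the $3\times 3$ block is ``stable under the automorphism defining $\CG$'' should be phrased carefully: the hyperspecial algebra is not a fixed-point subalgebra of $\overline{\lie g}[t]$ (the paper stresses this), so the relevant automorphism is the one defining the ambient twisted loop algebra. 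Since your operative argument is the direct comparison of the explicit formulas with their $n=1$ instance, nothing actually depends on that remark.
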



 
\section{The modules \texorpdfstring{$V(\bxi)$}{V}} \label{section2}
\subsection{}
The aim of this section is to define $\CG$ modules $V(\bxi)$, depending on a tuple of partitions $\bxi=(\xi^{\alpha})_{\alpha\in R^+}$, one partition attached to each positive root $\alpha\in R^+$. Theses modules will be the twisted analogues of the modules studied in \cite{CV13} and are quotients of local Weyl modules for hyperspecial twisted current algebras studied first in \cite{CIK14}. We recall that the local Weyl module $W_{\loc}(\lambda)$, $\lambda$ a dominant integral weight, is the $\CG$-module generated by an element $w_{\l}$ with the following relations

\begin{align}\label{lWeylrels}
&(\CN^+\oplus \CH_+) \cdot w_{\l}=0, \hspace{-0,8cm}
&h\cdot w_{\l}=\l(h)w_{\l},~h\in\H, \ \
&x_{-\a}^{\l(\a^\vee)+1}\cdot w_{\l}=0, ~ \a\in R^+ .
\end{align}

Note that since the ideal is homogeneous the module $W_{\loc}(\lambda)$ is graded. 
\subsection{}
For a dominant integral weight $\lambda\in P^+$, we say that
 $\bxi=(\xi^{\alpha})_{\alpha\in R^+}$ is a $\lambda$--compatible $|R^+|$--tuple of partitions, if  $$\xi^{\alpha}=(\xi^{\alpha}_{0}\ge\xi^{\alpha}_{1}\ge\cdots\ge\xi^{\alpha}_{s}\ge\cdots\ge 0),\ \ |\xi^\alpha|=\sum_{j\ge 1}\xi^{\alpha}_j=\lambda(\alpha^{\vee}). $$
If $|\xi^\alpha|>0$, then let $s_{\alpha}$ be the number of non-zero parts of $\widetilde{\xi}^{\alpha}:=(\xi^{\alpha}_{1}\ge\cdots\ge\xi^{\alpha}_{s}\ge\cdots\ge 0)$. For $\alpha\in R^+_{\ell}$, $k\in \bz_+$ we set 
$$\phi(\xi^{\alpha};k)=\begin{cases}
-\frac{1}{2}\xi^{\alpha}_{k+1}+\sum_{j\geq k+1} \xi^{\alpha}_{j},& \text{ if $0\leq k\leq s_{\alpha}-2$ }\\
(\xi^\a_{s_{\alpha}}-\frac{1}{2}\xi^\a_{s_{\alpha}-1})_{+},& \text{ if $k=s_{\alpha}-1$}\\
\ \  0 & \text{ else,}\end{cases}
$$
where we understand $s_+=\max\{0,s\}$ for $s\in \mathbb{R}$. For any non--negative integer $b$ and $x\in \CG$ let $x^{(b)}:=\frac{1}{b!}x^b$.
Using the above data we define
 $V(\bxi)$ to be the graded quotient of $W_{\loc}(\lambda)$ by the submodule generated by the graded elements:
  \begin{equation*}
 \Big\{(x_{\alpha+d_\alpha \delta})^{(s)}(x_{-\alpha})^{(s+r)}w_\lambda : \alpha\in R^+, \ s,r\in\bn,  \   s+r\ge 1+ rk+\sum_{j\ge k+1}\xi^{\alpha}_j,\ \ {\rm{for\ some}}\ k\in\mathbb{Z}_+ \Big\}
\end{equation*}
 \begin{equation*}
\bigcup \Big\{ (x_{\frac{\alpha}{2}+\frac{\delta}{2}})^{(2s)}(x_{-\alpha})^{(s+r)}w_\lambda : \alpha\in R^+_{\ell}, \ (r,s)\in\bold N, \    s\ge \frac{1}{2}+ 2rk+\phi(\xi^{\alpha};k),\ \ {\rm{for\ some}}\ k\in\mathbb{Z}_+\Big\}.\end{equation*}
\vskip 6pt
\noindent
Denoting by $v_\bxi$  the image of $w_\lambda$ in $V(\bxi)$, it is clear that
$V(\bxi)$ is  the graded $\CG$--module generated by $v_\bxi$ with defining relations:
\begin{gather}\label{vxi}(\CN^+\oplus \CH_+) v_\bxi=0,\ \ h\cdot v_\bxi=\l(h)v_\bxi,~h\in\H,\
 x_{-\a}^{\l(\a^\vee)+1}\cdot v_\bxi=0, ~ \a\in R^+,
 \end{gather} 
and for $r,s\in\bn$ and $k\in \mathbb{Z}_+$ we have
\begin{align}\label{firstshort}
&(x_{\alpha+d_{\alpha}\delta})^{(s)}(x_{-\alpha})^{(s+r)}v_\bxi=0, \ \alpha\in R^+, \ s+r\ge 1+ rk+\sum_{j\ge k+1}\xi^{\alpha}_j,  
\end{align}
and for $(r,s)\in\bold N$ and $k\in \mathbb{Z}_+$ we have
\begin{align}\label{thirdlong}
&(x_{\frac{\alpha}{2}+\frac{\delta}{2}})^{(2s)}(x_{-\alpha})^{(s+r)}v_\bxi=0,  \ \alpha\in R^+_{\ell}, \ s+r\ge \frac{1}{2}+(2k+1)r+\phi(\xi^{\alpha};k).
\end{align}
We will later see that $V(\bxi)$ is a non-zero indecomposable module for any $\bxi$.
\vskip 6pt

\subsection{ }
By using Garland type identities for the hyperspecial twisted current algebra $\CG$ and the current algebra $\mathfrak{sl}_2[t]$ we will give three equivalent presentations of the modules $V(\bxi)$. The idea here is to reformulate the relations \eqref{firstshort} and \eqref{thirdlong}.
First we fix some notations.

For $s,r\in\bz_+$, let \begin{equation*}\label{maxs}\bs(r,s)=\Big\{\bold b=(b_p)_{p\ge 0}: b_p\in\bz_+, \ \  \sum_{p\ge 0} b_p=r,\ \ \sum_{p\ge 0} pb_p=s\Big\}. \end{equation*}
Given  $\alpha \in R^+$ and   $s,r\in\bz_+$,
define  elements $\bx^-_\alpha(r,s)\in\bu(\CG)$ by, \begin{equation}\label{xlm}\bx^-_\alpha(r,s)=\sum_{\bold b\in\bs(r,s)}(x_{-\alpha})^{(b_0)}(x_{-\alpha+d_\alpha \delta})^{(b_1)}\cdots (x_{-\alpha+ d_\alpha s\delta})^{(b_s)},\end{equation}where we understand $\bx^-_\alpha(r,s)=0$ if $\bs(r,s)=\emptyset$.
In particular,  \begin{equation*}\label{special1} \bx^-_\alpha(0,s)=\delta_{s,0},\ \bx^-_\alpha(1,s)=x_{-\alpha+d_\alpha s\delta}.\end{equation*}

The following result is a combination of Proposition~\ref{sp2}(i) and a result of Garland \cite{G78}.

\begin{lem}\label{garsl_2} {{Given  $s\in\bn $}}, $r\in\bz_+$ and $\alpha\in R^+$ we have, $$(x_{\alpha+d_\alpha \delta})^{(s)}(x_{-\alpha})^{(s+r)}-(-1)^{s}\bx^-_\alpha(r,s)\in\bu(\CG)\CN^+.$$\hfill\qedsymbol
\end{lem}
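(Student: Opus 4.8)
The plan is to obtain the identity by transporting a classical identity of Garland through the isomorphism provided by Proposition~\ref{sp2}(i). Fix $\alpha\in R^+$ and let $\lie a_\alpha\subseteq\CG$ be the subalgebra generated by the elements $\{x_{\pm\alpha+r\delta}:r\in\bz_+,\ \alpha+r\delta\in\widehat R\}$. By Proposition~\ref{sp2}(i) there is a Lie algebra isomorphism $\phi\colon\lie{sl}_2[t]\xrightarrow{\ \sim\ }\lie a_\alpha$; the one point worth recording is that it reindexes the loop parameter by the factor $d_\alpha$, namely $\phi(e\otimes t^k)=x_{\alpha+d_\alpha k\delta}$ and $\phi(f\otimes t^k)=x_{-\alpha+d_\alpha k\delta}$, where $e,f,h$ is the standard basis of $\lie{sl}_2$ (and $h\otimes t^k$ maps into $\CH$, which we will not need explicitly). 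Extending $\phi$ to an algebra map $\bu(\lie{sl}_2[t])\to\bu(\lie a_\alpha)\subseteq\bu(\CG)$, it suffices to prove the corresponding congruence inside $\bu(\lie{sl}_2[t])$ and then apply $\phi$.

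Inside $\bu(\lie{sl}_2[t])$, Garland's identity \cite{G78} reads, for $s\in\bn$ and $r\in\bz_+$,
$$(e\otimes t)^{(s)}(f\otimes 1)^{(s+r)}-(-1)^s\sum_{\bold b\in\bs(r,s)}\prod_{p\ge 0}(f\otimes t^p)^{(b_p)}\ \in\ \bu(\lie{sl}_2[t])\cdot\big(\bc e\otimes\bc[t]\big).$$
Applying $\phi$, the first term becomes $(x_{\alpha+d_\alpha\delta})^{(s)}(x_{-\alpha})^{(s+r)}$, and since $\phi\big(\prod_p(f\otimes t^p)^{(b_p)}\big)=(x_{-\alpha})^{(b_0)}(x_{-\alpha+d_\alpha\delta})^{(b_1)}\cdots(x_{-\alpha+d_\alpha s\delta})^{(b_s)}$, the subtracted sum is exactly $(-1)^s\bx^-_\alpha(r,s)$ from \eqref{xlm}. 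Hence the difference in the statement is the $\phi$-image of an element of the left ideal $\bu(\lie{sl}_2[t])\cdot(\bc e\otimes\bc[t])$.

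It remains to check that this image lies in $\bu(\CG)\CN^+$. As $\phi$ is an algebra homomorphism, $\phi\big(\bu(\lie{sl}_2[t])\cdot(\bc e\otimes\bc[t])\big)=\bu(\lie a_\alpha)\cdot\phi(\bc e\otimes\bc[t])\subseteq\bu(\CG)\cdot\phi(\bc e\otimes\bc[t])$, where $\phi(\bc e\otimes\bc[t])$ is spanned by the root vectors $x_{\alpha+d_\alpha k\delta}$, $k\ge 0$. For $k=0$ this is $x_\alpha$ with $\alpha\in R^+$, while for $k\ge 1$ one has $\alpha+d_\alpha k\delta\in R^+_s+\bn\delta$ if $\alpha$ is short and $\alpha+d_\alpha k\delta\in R^+_\ell+2\bn\delta$ if $\alpha$ is long; in every case $\alpha+d_\alpha k\delta\in\widehat R_{\mathrm{re}}(+)$, so $x_{\alpha+d_\alpha k\delta}\in\CN^+$. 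Therefore $\phi(\bc e\otimes\bc[t])\subseteq\CN^+$ and the difference lies in $\bu(\CG)\CN^+$, as claimed.

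The genuine content here is minimal: the only step requiring care is matching the loop grading of the isomorphism in Proposition~\ref{sp2}(i), that is, confirming that the $k$-th loop generator corresponds to $x_{\pm\alpha+d_\alpha k\delta}$ so that the spacing $d_\alpha$ between the nodes of $\bx^-_\alpha(r,s)$ comes out correctly, together with the verification that the positive real root spaces $x_{\alpha+d_\alpha k\delta}$ sit inside $\CN^+$. Everything else is a formal transport of Garland's purely algebraic identity, so no substantive new obstacle arises; in particular the sign $(-1)^s$ in the statement is precisely the one occurring in Garland's identity.
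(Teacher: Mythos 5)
Your proof is correct and takes exactly the approach the paper intends: the paper offers no written argument beyond declaring the lemma ``a combination of Proposition~\ref{sp2}(i) and a result of Garland \cite{G78},'' and your transport of Garland's $\lie{sl}_2[t]$--identity through that isomorphism is precisely this combination spelled out. The two details you add --- that the isomorphism reindexes the loop variable by $d_\alpha$ (so the spacing in $\bx^-_\alpha(r,s)$ from \eqref{xlm} comes out right) and that each $x_{\alpha+d_\alpha k\delta}$ lies in $\CN^+$ because $\alpha+d_\alpha k\delta\in\widehat R_{\rm re}(+)$ --- are exactly the points that need checking, and both are verified correctly.
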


\subsection{}
For any $(r,s)\in\bold N$ we shall define a second indexing set (see also \cite[Section 7]{CIK14}), namely let $\tilde{\bs}(r,s)$ be the set of non-negative integer sequences $\bold p=(p_i)_{i\in \frac{1}{2}\bz_+}$ that satisfy
\begin{align}\label{a22sequences}
&r = \sum_{N\geq 0} \frac{1}{2}p_{N+\frac{1}{2}} +\sum_{N\geq 0} p_N,\\
&s = \sum_{N\geq 0} \frac{2N+1}{2}p_{N+\frac{1}{2}} +\sum_{N\geq 0} 2Np_N.
\end{align}
The support of $\bold p\in \tilde{\bs}(r,s)$ is defined as 
$$
\supp(\bold p)=\big\{i\in \frac{1}{2}\bz_+~|~p_i\neq 0\big\}.
$$
Also, let 
\begin{equation}\label{ydefn}
\bold y^\a(r,s)=\sum_{\bold p\in \tilde{\bs}(r,s)} \overset{\rightarrow}{\prod}_{N\geq 0}
\left(
\frac{(-1)^{p_{N+\frac{1}{2}}}}{2^{Np_{N+\frac{1}{2}}}}
{x_{-\frac{\a}{2}+(N+\frac{1}{2})\d}^{(p_{N+\frac{1}{2}})}}
\right)\left(
\frac{(-1)^{p_{N}}(2-(-1)^N)^{p_N}}{2^{2Np_{N}}}
{x_{-\a+2N\d}^{(p_N)}}
\right),
\end{equation}
where $\overset{\rightarrow}{\prod}_{N\geq 0}$ refers to the product of the specified factors written exactly in the increasing order of the indexing parameter (the factors do not commute and the order in which they appear in the product is important). The following lemma is proven in \cite[Corollary 5.39.]{F-V98}.

\begin{lem}\label{gara_2}
Let $(r,s)\in\bold N$. Then,
$$
(-1)^{s+r}{x_{\frac{\a}{2}+\frac{1}{2}\d}^{(2s)}}{x_{-\a}^{(s+r)}}-\bold y^\a(r,s)\in \bu(\CG)\CN^+ .
$$\hfill\qedsymbol
\end{lem}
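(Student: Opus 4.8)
The statement to prove is \lemref{gara_2}, which asserts an identity in $\bu(\CG)$ modulo the left ideal $\bu(\CG)\CN^+$. The lemma relates the ``raising--lowering'' monomial $x_{\frac{\a}{2}+\frac12\d}^{(2s)}x_{-\a}^{(s+r)}$ to the explicit sum $\bold y^\a(r,s)$ over the indexing set $\tilde{\bs}(r,s)$. This is the exact twisted analogue, for the rank--one twisted current algebra of type $\tt A_2^{(2)}$, of Garland's identity that produced \lemref{garsl_2} in the $\lie{sl}_2[t]$ case.

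The plan is to reduce everything to the rank--one situation and then invoke the known identity. First I would apply \propref{sp2}(ii): for a fixed long root $\alpha\in R^+_\ell$, the elements $x_{\frac12(\pm\alpha+(2r+1)\delta)}$ and $x_{\pm\alpha+2r\delta}$ for $r\in\bz_+$ generate a subalgebra of $\CG$ isomorphic to the current algebra of type $\tt A_2^{(2)}$. Thus both $x_{\frac{\a}{2}+\frac12\d}^{(2s)}x_{-\a}^{(s+r)}$ and $\bold y^\a(r,s)$ lie inside (the universal enveloping algebra of) this rank--one subalgebra, and the difference we must control is an element there. The structure constants built into the definition \eqref{ydefn}---the signs $(-1)^{p_N}$, the powers $2^{-2Np_N}$, $2^{-Np_{N+1/2}}$, and the factor $(2-(-1)^N)^{p_N}$---are precisely the coefficients dictated by the realization of $\CG$ as a subalgebra of $\mathcal L(\lie{sl}_{2n+1})$ given in the preceding subsection, where $x_{-\a+2N\d}=X^-_{i,2n+1-i}\otimes t^{2N-1}$ and $x_{-\frac{\a}{2}+(N+\frac12)\d}$ is the symmetrized element involving $(-t)^{N}$. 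So the coefficients are not arbitrary: they are exactly what one gets by transporting the abstract $\tt A_2^{(2)}$ identity through this explicit embedding.

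Having localized to the rank--one subalgebra, I would quote the cited source directly: the identity is \cite[Corollary 5.39]{F-V98}, which establishes precisely this congruence modulo $\bu\CN^+$ inside the $\tt A_2^{(2)}$ current algebra. Concretely, the proof amounts to checking that the $\tt A_2^{(2)}$ version of Garland's identity from \cite{F-V98} matches our normalization after identifying the generators via \propref{sp2}(ii), so that the nilpotent part $\CN^+$ of the ambient algebra restricts correctly to the nilpotent part of the subalgebra. The indexing set $\tilde{\bs}(r,s)$, with its two defining relations \eqref{a22sequences}, is exactly the bookkeeping of how a monomial of total ``lowering weight'' $r$ and ``degree'' $s$ decomposes into half--integer and integer grade pieces, mirroring $\bs(r,s)$ in the $\lie{sl}_2$ case.

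The main obstacle, and the only genuine content beyond citation, is the verification that the explicit coefficients in $\bold y^\a(r,s)$ agree with those produced by \cite[Corollary 5.39]{F-V98} under the identification of generators. This is delicate because the embedding of \secref{section1} introduces the sign twists $(-t)^r$ and the $(-1)^{i}$ factors, and one must confirm that the combination $(2-(-1)^N)^{p_N}$ correctly accounts for the distinction between even and odd grades of the short imaginary contributions (the $N=0$ versus $N\ge 1$ behaviour, reflecting $R_s+\bn\delta=\emptyset$ for $\tt A_2^{(2)}$). I would handle this by fixing the isomorphism of \propref{sp2}(ii) explicitly on generators, tracking the scalar that each $x_{-\a+2N\d}$ and $x_{-\frac{\a}{2}+(N+\frac12)\d}$ acquires, and then observing that the normalized identity of \cite{F-V98} is stated precisely so that these scalars are absorbed---so the congruence holds term by term in the sum over $\tilde{\bs}(r,s)$, which completes the proof.
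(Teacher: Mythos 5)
Your proposal is correct and follows essentially the same route as the paper: the paper offers no argument of its own beyond stating that the lemma is proven in \cite[Corollary 5.39]{F-V98}, which is exactly the citation you reduce to after localizing to the rank--one $\tt A_2^{(2)}$ subalgebra via Proposition~\ref{sp2}(ii). Your additional care in checking that the normalization constants in \eqref{ydefn} match those of the cited identity under the explicit embedding is a reasonable (and implicit in the paper) verification, not a departure from its method.
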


\subsection{}
In order to obtain a second presentation we shall reformulate \eqref{firstshort}--\eqref{thirdlong}. Using Lemma~\ref{garsl_2} we see that,
$\big( (x_{\a +d_\a \d})^{(s)} (x_{-\a})^{(s+r)}-(-1)^s\bx^-_\alpha(r,s)\big)v_\bxi =0,$ and hence \eqref{firstshort} is equivalent to:
\begin{equation}\label{first1}\bx^-_\alpha(r,s)v_\bxi =0,\ \alpha\in R^+,\ \forall \ s,r\in \bn,\ k\in\mathbb{Z}_+ \ \text{such that} \ s+r\ge 1+ rk+\sum_{j\ge k+1}\xi^{\alpha}_j.\end{equation}
Now using Lemma~\ref{gara_2} we see that,
$\big((-1)^{s+r}{x_{\frac{\a}{2}+\frac{1}{2}\d}^{(2s)}}{x_{-\a}^{(s+r)}}-\bold y^\a(r,s)\big)v_\bxi =0,$
and hence \eqref{thirdlong} is equivalent to
\begin{equation}\label{second2}\bold y^\a(r,s)v_\bxi =0,\ \alpha\in R_{\ell}^+, \  \forall \ (s,r)\in \bold N, \ k\in\mathbb{Z}_+ \ \text{such that} \ s+r\ge \frac{1}{2}+(2k+1)r+\phi(\xi^{\alpha};k). \end{equation}
It follows that $V(\bxi)$ is the quotient of $W_{\loc}(\lambda)$ by the submodule generates by \eqref{first1} and \eqref{second2}. From now on, we use both presentations of $V(\bxi).$ As a first application we show that $V(\bxi)$ is non-zero and hence indecomposable.
\begin{prop}
Let $\bxi$ be a $\lambda$-compatible partition.
The module $V(\bxi)$ has a unique irreducible quotient $\ev^*_0V(\lambda)$, and hence $V(\bxi)$ is a non-zero indecomposable $\CG$--module. 
\begin{pf}
We denote by $v_{\lambda}$ the highest weight vector of $\ev^*_0V(\lambda)$. Obviously $\ev^*_0V(\lambda)$ is a quotient of the local Weyl module $W_{\loc}(\lambda)$. If $\mathbf b\in \mathbf S(r,s)$ and $s>0$, then $b_p\neq 0$ for some $p\neq 0$ and hence $\bold x^{-}_{\alpha}(r,s)v_{\lambda}=0.$
Now let $(r,s)\in \bold N$, $k\in \mathbb{Z}_+$ such that $r+s\geq \frac{1}{2}+(2k+1)r+\phi(\xi^{\alpha};k)$. It follows $s\neq 0$ and thus there exists for any $\mathbf p\in \tilde{\mathbf S}(r,s)$ a positive integer $N$, such that either $p_N\neq 0$ or $p_{N-\frac{1}{2}}\neq 0$. It follows by the definition of $\bold y^\a(r,s)$ in \eqref{ydefn} that $\bold y^{\a}(r,s)v_{\lambda}=0$. Consequently, $V(\bxi)$ has a unique non-zero quotient and is indecomposable. 
\end{pf}
\end{prop}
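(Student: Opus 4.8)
The plan is to realise the irreducible module $\ev_0^*V(\lambda)$ as a quotient of $V(\bxi)$, and then to upgrade ``a quotient'' to ``the unique irreducible quotient'' together with indecomposability by exploiting that $V(\bxi)$ is cyclic with a one--dimensional top weight space. I write $v_\lambda$ for the cyclic generator of $\ev_0^*V(\lambda)$ and recall that, since $\ev_0\colon\CG\to\lie g$ is surjective, $\ev_0^*V(\lambda)$ is an irreducible graded $\CG$--module concentrated in grade zero on which the ideal $\CG_+$ acts by zero. First I would check that $\ev_0^*V(\lambda)$ satisfies every defining relation of $V(\bxi)$. The relations \eqref{vxi} are exactly the local Weyl relations and hold because $v_\lambda$ is a highest weight vector of weight $\lambda$ with $\CH_+v_\lambda=0$, so that $\ev_0^*V(\lambda)$ is already a quotient of $W_{\loc}(\lambda)$. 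For the remaining relations I would use the second presentation \eqref{first1}--\eqref{second2}, the point being purely grading--theoretic: every monomial occurring in $\bx^-_\alpha(r,s)$ or in $\bold y^\alpha(r,s)$ that is indexed by an admissible sequence contains at least one factor of strictly positive grade, and any such factor lies in $\CG_+$ and hence annihilates $v_\lambda$.

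Concretely, for $\bold b\in\bs(r,s)$ the constraint $\sum_p p\,b_p=s$ with $s\in\bn$ forces some $b_p\neq0$ with $p\geq1$, producing a factor $x_{-\alpha+d_\alpha p\delta}$ of positive grade; hence $\bx^-_\alpha(r,s)v_\lambda=0$ for all $(r,s)$ appearing in \eqref{first1}. The verification of \eqref{second2} runs in the same spirit but is where I expect the (admittedly modest) main obstacle to sit. Since $(r,s)\in\bold N$ forces $s\geq\tfrac12>0$, the defining equation $s=\sum_{N\geq0}\tfrac{2N+1}{2}p_{N+\frac12}+\sum_{N\geq0}2Np_N$ rules out any $\bold p\in\tilde{\bs}(r,s)$ supported only at the index $0$; thus each surviving $\bold p$ has $p_{N+\frac12}\neq0$ for some $N\geq0$ or $p_N\neq0$ for some $N\geq1$, and the corresponding factor in \eqref{ydefn}, either $x_{-\frac\alpha2+(N+\frac12)\delta}$ or $x_{-\alpha+2N\delta}$ with $N\geq1$, again has positive grade and kills $v_\lambda$. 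The only delicate point is that the grade--zero generator $x_{-\alpha}=x_{-\alpha+2\cdot0\cdot\delta}$ occurs as the $p_0$ factor, so one must confirm that $s>0$ genuinely excludes the monomial $x_{-\alpha}^{(p_0)}$ standing alone; the weight/grade bookkeeping above does exactly this. Consequently the surjection $W_{\loc}(\lambda)\twoheadrightarrow\ev_0^*V(\lambda)$ factors through $V(\bxi)$, exhibiting $\ev_0^*V(\lambda)$ as a nonzero irreducible quotient and in particular showing $V(\bxi)\neq0$.

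It then remains to deduce uniqueness and indecomposability. Since $V(\bxi)$ is generated by the single weight vector $v_\bxi$ of weight $\lambda$ in grade $0$, and since the $\lambda$--weight space of $W_{\loc}(\lambda)$, hence of its quotient $V(\bxi)$, is one dimensional and concentrated in grade $0$, every proper graded submodule of $V(\bxi)$ meets $\bc v_\bxi$ trivially. Therefore the sum of all proper graded submodules is again proper, which gives a unique maximal graded submodule and hence a simple head; this head must coincide with the irreducible quotient $\ev_0^*V(\lambda)$ constructed above, establishing uniqueness. Finally, if $V(\bxi)=A\oplus B$ as graded $\CG$--modules then $v_\bxi$, lying in the one--dimensional space $V(\bxi)_\lambda$, must belong entirely to $A$ or to $B$; since $v_\bxi$ generates $V(\bxi)$, the other summand is zero, so $V(\bxi)$ is indecomposable.
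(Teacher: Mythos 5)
Your proof is correct and follows essentially the same route as the paper's: both verify that $\ev_0^*V(\lambda)$ satisfies the relations \eqref{first1} and \eqref{second2} by observing that $s>0$ forces every monomial in $\bx^-_\alpha(r,s)$ and $\bold y^\alpha(r,s)$ to contain a factor of strictly positive grade, which annihilates $v_\lambda$. The only differences are cosmetic: you deduce $s>0$ directly from $(r,s)\in\bold N$ rather than from the defining inequality, and you spell out the standard one--dimensional top weight space argument for uniqueness of the irreducible quotient and for indecomposability, which the paper leaves implicit.
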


\subsection{} The third presentation requires an alternative description of the sets $\bs(r,s)$ and $\tilde{\bs}(r,s)$. First we focus on $\bs(r,s)$ and follow the methods from \cite{CV13}.
For $k\in\bz_+$, let $\bs(r,s)_k$ (resp. $_k\bs(r,s)$) be the subset of $\bs(r,s)$ consisting of elements $(b_p)_{p\ge 0}$, satisfying $$b_p=0,\ \ p\ge k,\ \ ({\rm{resp.}}\ \ b_p=0\ \ p<k).$$ 
Clearly,
\begin{equation}\label{alts}\bs(r,s)={} _k\bs(r,s)\bigsqcup \left(\bs(r-r',s-s')_k\times {}_{k}\bs(r',s')\right), \end{equation} where the union is over all pairs $r',s'\in\bz_+$ with $\bs(r-r',s-s')_k\ne\emptyset$ and ${}_k\bs(r',s')\ne\emptyset$.

\vskip 6pt

For $\a \in R^+$, define elements  $\bx^-_\a(r,s)_k$ and $_k\bx^-_\a(r,s)$ of $\bu(\CG)$ as in \eqref{xlm}, with the important difference that the index set for the summation is $\bold S_k(r,s)$ and ${}_{k}\bold S(r,s)$ respectively.
It follows that\begin{equation}\label{specialcase} \bx^-_\a(r,s)_k\ne 0\implies s\le (k-1)r,\quad _{k}\bx^-_\a(r,kr)=(x_{-\a + d_\a k\d})^{(r)} .\end{equation}

With the above notation we can state the following proposition, which is proven in \cite{CV13} and allows us to reformulate \eqref{first1}.

\begin{prop}\label{max}  Let $V$ be any representation of $\CG$ and let $v\in V$, $\a \in R^+$ and $K\in\bz_+$. Then,
\begin{gather*}\bx^-_{\alpha}(r,s)v=0\ {\rm{for\ all}}\ s,r\in\bn,\ k\in \mathbb{Z}_+
 \ \ {\rm{with}}\  s+r\ge 1+kr+K\ \iff\\  _{k}\bx^-_{\alpha}(r,s)v=0  \ {\rm{ for \ all}}\  s,r \in\bn,\ k\in \mathbb{Z}_+ \  {\rm{with}}\ s+r\ge 1+ kr+K.\end{gather*}\hfill\qedsymbol
\end{prop}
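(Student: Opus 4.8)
The plan is to lift the set--theoretic decomposition \eqref{alts} to an identity of operators in $\bu(\CG)$ and then to treat the two implications separately (following the method of \cite{CV13}): the backward one directly, the forward one by induction on $r$. The first step is to record the operator identity. In the monomial attached to $\bold b\in\bs(r,s)$ in \eqref{xlm} the factors are written in increasing order of the grading index, so splitting each $\bold b$ into its part supported on indices $<k$ and its part supported on indices $\ge k$ converts \eqref{alts} into
$$\bx^-_\alpha(r,s)=\sum_{\substack{r'+r''=r\\ s'+s''=s}}\bx^-_\alpha(r'',s'')_k\cdot {}_{k}\bx^-_\alpha(r',s'),\qquad k\in\bz_+,$$
in $\bu(\CG)$: the low--index block automatically precedes the high--index block in the defining product, so no reordering is required (in fact all the factors $x_{-\alpha+d_\alpha p\delta}$ lie in the abelian part of the $\lie{sl}_2[t]$ of \propref{sp2}(i) and hence commute). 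I would then keep at hand the two facts recorded in \eqref{specialcase}: a nonzero $\bx^-_\alpha(r'',s'')_k$ forces $s''\le (k-1)r''$, and ${}_{k}\bx^-_\alpha(r,kr)=(x_{-\alpha+d_\alpha k\delta})^{(r)}$. Finally, since $\bx^-_\alpha(r,s)$ does not depend on $k$, the left--hand family of relations is equivalent to its weakest ($k=0$) instances, namely $\bx^-_\alpha(r,s)v=0$ for all $r,s\in\bn$ with $s+r\ge 1+K$.

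For the implication $(\Leftarrow)$ I expect no induction to be needed. Given $(r,s)$ with $s+r\ge 1+K$, I would choose the largest $k_0\in\bz_+$ satisfying $s+r\ge 1+k_0r+K$ and expand $\bx^-_\alpha(r,s)v$ by the factorization with $k=k_0$. The summand with $(r'',s'')=(0,0)$ equals ${}_{k_0}\bx^-_\alpha(r,s)v$ and vanishes by hypothesis. In every other summand $r''\ge 1$; if its block $\bx^-_\alpha(r'',s'')_{k_0}$ is nonzero then $s''\le(k_0-1)r''$, whence
$$s'+r'=(s+r)-(s''+r'')\ge (s+r)-k_0r''\ge 1+k_0r'+K$$
(using $s+r\ge 1+k_0(r'+r'')+K$), so ${}_{k_0}\bx^-_\alpha(r',s')v=0$ by hypothesis; in the boundary case $r'=0$ one instead meets the block $\bx^-_\alpha(r,s)_{k_0}$, which is zero because $s\ge 1+(k_0-1)r+K>(k_0-1)r$. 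Hence every summand dies and $\bx^-_\alpha(r,s)v=0$.

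For the implication $(\Rightarrow)$ I would induct on $r$, solving the same factorization for its leading term:
$$ {}_{k}\bx^-_\alpha(r,s)v=\bx^-_\alpha(r,s)v-\sum_{(r'',s'')\neq(0,0)}\bx^-_\alpha(r'',s'')_k\cdot {}_{k}\bx^-_\alpha(r',s')v .$$
For $(r,s,k)$ with $s+r\ge 1+kr+K$ the first term on the right vanishes by the (reduced) hypothesis, and in each remaining summand $r''\ge 1$, hence $r'<r$. The same estimate as above, now with $k$ in place of $k_0$, gives $s'+r'\ge 1+kr'+K$, so ${}_{k}\bx^-_\alpha(r',s')v=0$ by the inductive hypothesis (and for $r'=0$ the block $\bx^-_\alpha(r,s)_k$ vanishes as before); the base case $r=1$ is immediate since then the only surviving block is $\bx^-_\alpha(1,s)_k=0$. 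The step I expect to carry the real weight is precisely this propagation of the degree bound $s+r\ge 1+kr+K$ down to the sub-blocks of strictly smaller $r'$: it is exactly the inequality $s''\le (k-1)r''$ from \eqref{specialcase} that makes $s'+r'\ge 1+kr'+K$ hold, and keeping the boundary terms ($r'=0$ and $r''=0$) under control is the only place where some care is required.
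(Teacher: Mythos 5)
Your proof is correct and takes essentially the paper's approach: the paper defers this proposition to \cite{CV13}, and its own proof of the twisted analogue (Proposition~\ref{maxa_2}, via the rearrangement Lemma~\ref{rearrangea_2}) follows exactly your scheme of lifting \eqref{alts} to an operator factorization, propagating the inequality of \eqref{specialcase} to the sub-blocks to get $s'+r'\ge 1+kr'+K$, and inducting on $r$ for the forward direction. The only inessential difference is in the converse: for $\lie{sl}_2[t]$ it is already immediate from the $k=0$ instance alone, since ${}_{0}\bx^-_\alpha(r,s)=\bx^-_\alpha(r,s)$, so your maximal-$k_0$ expansion, while valid, is not needed.
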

Our aim is to obtain a similar reformulation for \eqref{second2}. In order to do that we shall give an alternative description of the indexing set $\tilde{\bs}(r,s)$ and prove an analogous result to Proposition~\ref{max} for $\bold y^\a(r,s).$ 
\subsection{}
For $(r,s)\in\bold N$ and $k\in \frac{1}{2}\bz_+$, let 
$$
\tilde{\bs}_{<k}(r,s)=\{\bold p\in \tilde{\bs}(r,s)~|~\supp(\bold p)\subseteq [0,k) \}$$ and $$\tilde{\bs}_{\geq k}(r,s)=\{\bold p\in \tilde{\bs}(r,s)~|~\supp(\bold p)\subseteq [k,\infty) \}.
$$
We define $\bold y^\a_{<k}(r,s)$ and $\bold y^\a_{\geq k}(r,s)$ as in \eqref{ydefn},
with the difference that the index set for the summation is $\tilde{\bs}_{<k}(r,s)$ and $\tilde{\bs}_{\geq k}(r,s)$ respectively.
We record the following simple lemma which will be needed later.
\begin{lem}\label{lemmas} Let $\bold p=(p_i)_{i\in\frac{1}{2}\bz_+}\in \tilde\bs(r,s)$ for some $k\in \bn$.
\begin{itemize}
 \item[(i)] Suppose that $\bold p\in \tilde\bs_{<k+\frac{1}{2}}(r,s)$, then $s\leq (2k-1)r+\sum_{N\leq k}p_N$.
\item[(ii)]Suppose that $\bold p\in \tilde\bs_{<k}(r,s)$, then $s\leq 2(k-1)r+\sum_{N\leq k-1} \frac{1}{2} p_{N+\frac{1}{2}}$. 
\end{itemize}
\end{lem}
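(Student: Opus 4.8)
The statement to prove is Lemma~\ref{lemmas}, which gives two inequalities bounding $s$ in terms of $r$ and partial sums of the sequence $\mathbf p$ under support restrictions. Both parts follow from the defining relation \eqref{a22sequences} for $s$, namely
$$
s = \sum_{N\geq 0} \frac{2N+1}{2}p_{N+\frac{1}{2}} +\sum_{N\geq 0} 2Np_N,
$$
by estimating each weight $\tfrac{2N+1}{2}$ (for half-integer indices) and $2N$ (for integer indices) against the cutoff $k$, using the support condition to restrict which terms can be nonzero.

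\textbf{Plan for part (i).} Suppose $\mathbf p\in \tilde\bs_{<k+\frac12}(r,s)$, so every nonzero $p_i$ has index $i<k+\frac12$, i.e. $i\leq k$. The plan is to bound $s$ by splitting the sum over integer indices $N$ (with $0\le N\le k$) and half-integer indices $N+\frac12$ (with $0\le N\le k-1$). For the half-integer terms, the weight $\frac{2N+1}{2}=N+\frac12\le k-\frac12<2k-1$ for $k\ge 1$ (and one checks the boundary carefully), so each such term contributes at most $(2k-1)$ times its multiplicity $\frac12 p_{N+\frac12}$ toward $r$. For the integer terms one isolates the top term $N=k$ with weight $2k$. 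The idea is to write $s$ as a combination of $r=\sum_N \frac12 p_{N+\frac12}+\sum_N p_N$ (relation \eqref{a22sequences} for $r$) times a factor near $2k-1$, plus a correction. Concretely I would show $s\le (2k-1)r + \sum_{N\le k} p_N$ by verifying that for each nonzero term the weight minus $(2k-1)$ times the corresponding $r$-contribution is at most the stated correction: for integer $N\le k$, the contribution to $s$ is $2N p_N$ while to $r$ it is $p_N$, giving $2Np_N-(2k-1)p_N=(2N-2k+1)p_N\le p_N$, and for half-integer $N+\frac12$ with $N\le k-1$, the contribution is $\frac{2N+1}{2}p_{N+\frac12}$ to $s$ and $\frac12 p_{N+\frac12}$ to $r$, giving $(\frac{2N+1}{2}-\frac{2k-1}{2})p_{N+\frac12}=(N-k+1)p_{N+\frac12}\le 0$. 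Summing these inequalities over all indices yields $s-(2k-1)r\le \sum_{N\le k}p_N$, which is the claim.

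\textbf{Plan for part (ii).} Here $\mathbf p\in\tilde\bs_{<k}(r,s)$, so integer indices satisfy $N\le k-1$ and half-integer indices satisfy $N+\frac12<k$, i.e. $N\le k-1$. The same term-by-term comparison works with the factor $2(k-1)$ in place of $(2k-1)$: for integer $N\le k-1$, $2Np_N-2(k-1)p_N=(2N-2k+2)p_N\le 0$; for half-integer $N+\frac12$ with $N\le k-1$, $\frac{2N+1}{2}p_{N+\frac12}-2(k-1)\cdot\frac12 p_{N+\frac12}=(\frac{2N+1}{2}-(k-1))p_{N+\frac12}=(N-k+\frac32)p_{N+\frac12}\le \frac12 p_{N+\frac12}$, the last inequality because $N\le k-1$ forces $N-k+\frac32\le\frac12$. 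Summing gives $s-2(k-1)r\le\sum_{N\le k-1}\frac12 p_{N+\frac12}$, as asserted.

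\textbf{Main obstacle.} The computations themselves are elementary; the only delicate point is keeping the two indexing conventions straight, since the half-integer weight $\frac{2N+1}{2}$ and the strict-versus-weak cutoffs $<k$ and $<k+\frac12$ must be translated correctly into bounds on $N$. I would be careful to verify the boundary term (the largest allowed index) in each part, as that is where the inequality is tight and where an off-by-one in the support condition would break the estimate. Once the correct ranges of $N$ are fixed, both inequalities reduce to the elementary per-term bounds above and summation over $N$.
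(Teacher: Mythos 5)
Your proposal is correct and is essentially the paper's own proof: the paper bounds $s$ by the same term-by-term estimates ($2N\le(2k-1)+1$ for $N\le k$ and $\tfrac{2N+1}{2}\le(2k-1)\cdot\tfrac12$ for $N\le k-1$ in part (i), and the analogous bounds against $2(k-1)$ in part (ii)), then absorbs the main part into $r$ via the defining relation \eqref{a22sequences}. Your per-term "weight minus multiple of the $r$-contribution" formulation is just a rearrangement of the paper's displayed chain of inequalities, and your handling of the support cutoffs matches the paper's index ranges exactly.
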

\begin{pf}
 Suppose that $\bold p\in \tilde\bs_{<k+\frac{1}{2}}(r,s)$.
 Then
 \begin{align*}
 s &= \sum_{k-1\ge N\geq 0} \frac{2N+1}{2}p_{N+\frac{1}{2}} +\sum_{k\ge N\geq 0} 2Np_N &\\&
 \le (2k-1) \sum_{k-1\ge N\geq 0}\frac{1}{2} p_{N+\frac{1}{2}} +\sum_{k\ge N\geq 0} (2k-1)p_N + \sum_{k\ge N\ge 0}p_N &\\&
 \le (2k-1)r+\sum_{k\ge N\ge 0}p_N.
 \end{align*}
 
Suppose that $\bold p\in \tilde\bs_{<k}(r,s)$. Then
\begin{align*}
 s &= \sum_{k-1\ge N\geq 0} \frac{2N+1}{2}p_{N+\frac{1}{2}} +\sum_{k-1\ge N\geq 0} 2Np_N &\\&
 \le 2(k-1) \sum_{k-1\ge N\geq 0} \frac{1}{2} p_{N+\frac{1}{2}} +\sum_{k-1\ge N\geq 0} 2(k-1)p_N+\sum_{k-1\ge N\geq 0}\frac{1}{2} p_{N+\frac{1}{2}}&\\&
 \le 2(k-1)r+\sum_{k-1\ge N\geq 0} \frac{1}{2}p_{N+\frac{1}{2}}.
 \end{align*}
\end{pf}
\subsection{}
The next lemma gives a clever way of writing the elements $\bold y^\a(r,s)$ and is needed for the proof of the next proposition.
\begin{lem}\label{rearrangea_2} Let $\a \in R^+_\ell.$ Suppose that $(r,s)\in\bold N$, $k\in \mathbb{Z}_+$ are such that $s+r\ge (2k+1)r+K$ for some $K\in\frac{1}{2}\bz_+$.
  Then we have
  \begin{equation}\label{eq4}
\bold y^\a(r,s)=\bold y^\a_{\geq k+\frac{1}{2}}(r,s)+\sum_{(r',s')\in T(r,s,k)}  \bold y^\a_{<k+\frac{1}{2}}(r-r',s-s') \bold y^\a_{\geq k+\frac{1}{2}}(r',s') ,
\end{equation}
 where $T(r,s,k)=\big\{(r',s'):(r',s')\in \bold N, \ r'<r, \ s'\leq s \  \text{and}\ s'+r'\ge (2k+1)r'+K\big\}.$

  \end{lem}
  \begin{pf}
   If  $(s-s')> 2k(r-r')$ then $\tilde{\bs}_{< k+\frac{1}{2}}(r-r',s-s')=\emptyset$ by Lemma~\ref{lemmas}, in which case $\bold y^\a_{< k+\frac{1}{2}}(r-r',s-s')=0$. Hence $s-s'\leq 2k(r-r')$ and
 $$s'+r'+(s-s')+(r-r')\ge (2k+1)(r-r')+(2k+1)r'+K$$ implies that $s'+r'\ge (2k+1)r'+K.$ The remaining restrictions on $(r',s')$ are obvious (see \cite[Section 8.2]{CIK14}).
  \end{pf}


Now we are able to prove the analog result to Proposition~\ref{max}.
\begin{prop}\label{maxa_2}  Let $V$ be any representation of $\CG$ and let $v\in V$, $\a \in R^+_\ell$ and $K\in\frac{1}{2}\bz_+$. Then,
\begin{gather*}\bold y^{\alpha}(r,s)v=0\ {\rm{for\ all}}\ (r,s)\in\bold N, k\in \mathbb{Z}_+
 \ \ {\rm{with}}\  s+r\ge \frac{1}{2}+(2k+1)r+K\ \iff\\  \bold y^{\alpha}_{\ge k+\frac{1}{2}}(r,s)v=0  \ {\rm{ for \ all}}\ (r,s)\in\bold N, k\in \mathbb{Z}_+ \  {\rm{with}}\ s+r\ge \frac{1}{2}+(2k+1)r+K.\end{gather*}
\end{prop}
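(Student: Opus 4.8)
The plan is to prove the equivalence of the two families of relations by a double induction, mirroring the structure of Proposition~\ref{max} for the $\lie{sl}_2[t]$ case but adapted to the $\tt A_2^{(2)}$ elements $\bold y^\a(r,s)$. The backward implication ($\bold y^\a(r,s)v=0 \Leftarrow \bold y^\a_{\ge k+\frac12}(r,s)v=0$) is the easy direction and I would dispatch it first. The key tool is Lemma~\ref{rearrangea_2}, which expresses $\bold y^\a(r,s)$ as $\bold y^\a_{\ge k+\frac12}(r,s)$ plus a sum of products $\bold y^\a_{<k+\frac12}(r-r',s-s')\,\bold y^\a_{\ge k+\frac12}(r',s')$ over $(r',s')\in T(r,s,k)$. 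If we assume $\bold y^\a_{\ge k+\frac12}(r',s')v=0$ for all admissible parameters, then every term in the rearrangement formula kills $v$: the leading term does so directly, and each summand does because its rightmost factor $\bold y^\a_{\ge k+\frac12}(r',s')$ already annihilates $v$ (the constraint $(r',s')\in T(r,s,k)$ guarantees $s'+r'\ge(2k+1)r'+K$, so $s'+r'\ge\frac12+(2k+1)r'+K$ is within the hypothesized vanishing range up to the harmless $\frac12$ shift). Hence $\bold y^\a(r,s)v=0$.

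For the forward implication I would argue by induction on $s$ (or equivalently on the pair $(r,s)$ under a suitable well-ordering), again reading Lemma~\ref{rearrangea_2} but now solving for the ``new'' high-support term. Rearranging equation~\eqref{eq4} gives
\begin{equation*}
\bold y^\a_{\ge k+\frac12}(r,s)v = \bold y^\a(r,s)v - \sum_{(r',s')\in T(r,s,k)} \bold y^\a_{<k+\frac12}(r-r',s-s')\,\bold y^\a_{\ge k+\frac12}(r',s')v.
\end{equation*}
The first term on the right vanishes by hypothesis. For each summand, the parameter $(r',s')$ satisfies $r'<r$ and $s'\le s$ with $s'+r'\ge(2k+1)r'+K$; moreover, as shown inside the proof of Lemma~\ref{rearrangea_2}, the factor $\bold y^\a_{<k+\frac12}(r-r',s-s')$ is nonzero only when $s-s'\le 2k(r-r')$, which forces $s'>0$ whenever $r'<r$, so the induction hypothesis applies to give $\bold y^\a_{\ge k+\frac12}(r',s')v=0$. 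Consequently the entire right-hand side vanishes and $\bold y^\a_{\ge k+\frac12}(r,s)v=0$, completing the induction step. The base case is handled by observing that for small $s$ the set $T(r,s,k)$ is empty, so $\bold y^\a(r,s)=\bold y^\a_{\ge k+\frac12}(r,s)$ outright.

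The main obstacle I anticipate is bookkeeping around the index $k$: the relations are quantified over all $k\in\bz_+$ simultaneously, and in the inductive step one must be careful that the $k$ attached to the smaller parameter $(r',s')$ is the same $k$ throughout, so that the hypothesized vanishing of $\bold y^\a_{\ge k+\frac12}(r',s')v$ is genuinely available. This is exactly why the constant $K\in\frac12\bz_+$ is carried along rather than being specialized to $\phi(\xi^\a;k)$ — it decouples the two halves of the support and makes the induction self-contained. A secondary subtlety is verifying the nonvanishing/support estimate $s-s'\le 2k(r-r')$ from Lemma~\ref{lemmas}(i), which is what prevents $(r',s')=(r,s)$ from appearing and thereby guarantees a strict decrease in the induction parameter. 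Once these two points are pinned down, the argument is a direct transcription of the untwisted template in Proposition~\ref{max}.
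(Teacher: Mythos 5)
Your overall skeleton (invoke Lemma~\ref{rearrangea_2}, solve for the high-support term, induct; backward direction immediate) is the same as the paper's, but your forward direction has a genuine flaw: induction on $s$ does not close. The set $T(r,s,k)$ contains pairs with $s'=s$ and $r'<r$. Indeed, any $\bold p\in\tilde{\bs}(r,s)$ with $p_0\ge 1$ splits into its part supported at $0$ and its high-support part, producing a summand $\bold y^\a_{<k+\frac12}(r-r',0)\,\bold y^\a_{\ge k+\frac12}(r',s)$ with $r'<r$ and $s'=s$; and such pairs automatically satisfy the membership constraint of $T(r,s,k)$, since $s+r\ge\frac12+(2k+1)r+K$ and $r'\le r$ give $s+r'\ge\frac12+(2k+1)r'+K$. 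For these summands an induction hypothesis quantified by $s$ is simply unavailable. Your proposed rescue --- that the estimate $s-s'\le 2k(r-r')$ from Lemma~\ref{lemmas} ``forces $s'>0$'' and ``prevents $(r',s')=(r,s)$, guaranteeing a strict decrease'' --- is not correct and not the relevant point: when $s'=s$ the estimate reads $0\le 2k(r-r')$, which excludes nothing, and $(r',s')=(r,s)$ is already excluded simply because $r'<r$ is part of the definition of $T(r,s,k)$. The paper inducts on $r$ instead, with the hypothesis quantified over all $r'<r$ and all $s'\le s$: then every summand is covered precisely because $r'<r$ is built into $T(r,s,k)$, and the base case $r=\frac12$ is immediate since $\tilde{\bs}(1/2,s)$ consists of a single sequence supported at the point $s\ge k+\frac12$, so $\bold y^\a(1/2,s)=\bold y^\a_{\ge k+\frac12}(1/2,s)$ (equivalently, $T(1/2,s,k)=\emptyset$ because no $r'\in\frac12\bn$ satisfies $r'<\frac12$). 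Your parenthetical ``suitable well-ordering on $(r,s)$'' gestures at this fix, but as written your justification for descent would fail.

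A second, smaller point is the bookkeeping of the constant. Lemma~\ref{rearrangea_2} applied with the constant $K$ puts into $T(r,s,k)$ all pairs with $s'+r'\ge(2k+1)r'+K$, and this does \emph{not} imply $s'+r'\ge\frac12+(2k+1)r'+K$ (the two differ exactly when $(2k+1)r'+K$ is an integer equal to $s'+r'$), so ``up to the harmless $\frac12$ shift'' is not a proof. The clean fix, implicit in the paper, is to invoke Lemma~\ref{rearrangea_2} with the constant $K+\frac12\in\frac12\bz_+$, so that the membership condition of $T(r,s,k)$ becomes exactly the vanishing range $s'+r'\ge\frac12+(2k+1)r'+K$ appearing in the proposition. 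With that substitution, and with the induction run on $r$, both of your directions become correct and coincide with the paper's argument.
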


\begin{pf} First suppose that $\bold y^{\alpha}(r,s)v=0$ for all $(r,s)\in\bold N, k\in \mathbb{Z}_+ \ {\rm{with}}\  s+r\ge \frac{1}{2}+(2k+1)r+K$.   
We shall prove the statement by induction on $r$.
For the initial step assume $r=\frac{1}{2}$ and let $s\in\frac{1}{2}\bn$ be such that $2s+1\ge 1+(2k+1)+2K$. Then we have $s\ge k+\frac{1}{2}$ and hence
$$\bold y^{\alpha}(1/2,s)=\bold y^{\alpha}_{\ge k+\frac{1}{2}}(1/2,s),$$
which proves $\bold y^{\alpha}_{\ge k+\frac{1}{2}}(1/2,s)v=0$.
Assume now that we have proved the statement for all $r'<r$ and all $s'\le s$ with $s'+r'\ge \frac{1}{2}+(2k+1)r'+K.$
Using Lemma~\ref{rearrangea_2}, we get 
$$\bold y^\a(r,s)v=\bold y^\a_{\geq k+\frac{1}{2}}(r,s)v+\sum_{(r',s')\in T(r,s,k)}  \bold y^\a_{<k+\frac{1}{2}}(r-r',s-s') \bold y^\a_{\geq k+\frac{1}{2}}(r',s')v ,$$
where the sum is over all $r'<r$ and $s'\le s$ with $s'+r'\ge \frac{1}{2}+(2k+1)r'+K$. The inductive hypothesis applies to the second term on the RHS and hence we get  $\bold y^\a_{\geq k+\frac{1}{2}}(r,s)v=0$. The converse statement is obvious by Lemma~\ref{rearrangea_2}.
\end{pf}


\subsection{} We can now give the third presentation of $V(\bxi)$.
\begin{prop}\label{third} The module $V(\bxi)$ is generated by the element $v_\bxi$ with defining relations \eqref{vxi} and
\begin{align}\label{third1}&_{k}\bx^-_{\alpha}(r,s)v_\bxi=0,\  {\alpha}\in R^+,\  s,r\in\bn,\ k\in \mathbb{Z}_+ \ \ s+r\ge 1+kr+\sum_{j\ge k+1}\xi^{\alpha}_j, \ \text{and} \\
\label{third2} &\bold y^{\alpha}_{\ge k+\frac{1}{2}}(r,s)v_\bxi=0, \ {\alpha}\in R^+_\ell, \ (r,s)\in\bold N, \ k\in\bz_+, \ \ s+r\ge \frac{1}{2}+(2k+1)r+\phi(\xi^{\alpha};k).\end{align}
\end{prop}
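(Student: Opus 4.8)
The plan is to deduce the third presentation from the second one. Recall that we have already identified $V(\bxi)$ with the quotient of $W_{\loc}(\lambda)$ by the submodule generated by the elements appearing in \eqref{first1} and \eqref{second2}. Hence it will be enough to prove that, for every $\CG$--module $V$ and every $v\in V$ satisfying the relations \eqref{vxi}, the family \eqref{first1} is equivalent to \eqref{third1} and the family \eqref{second2} is equivalent to \eqref{third2}. Granting this, I would apply the forward implications to $v_\bxi\in V(\bxi)$ to see that the generators of \eqref{third1} and \eqref{third2} vanish in $V(\bxi)$, and apply the backward implications to the image of $w_\lambda$ in the quotient of $W_{\loc}(\lambda)$ by the submodule generated by \eqref{third1} and \eqref{third2} to see that the generators of \eqref{first1} and \eqref{second2} vanish there; thus the two submodules of $W_{\loc}(\lambda)$ coincide and the asserted presentation follows.

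For the short roots I would fix $\alpha\in R^+$ and $k\in\bz_+$ and set $K=\sum_{j\ge k+1}\xi^{\alpha}_j\in\bz_+$, and for the long roots fix $\alpha\in R^+_\ell$ and $k\in\bz_+$ and set $K=\phi(\xi^{\alpha};k)\in\frac{1}{2}\bz_+$. With these choices the two equivalences are precisely the statements of Proposition~\ref{max} and Proposition~\ref{maxa_2}. The one point requiring comment is that here the constant $K$ depends on $k$, whereas those propositions are phrased for a $K$ that does not. This will cause no harm: the inductions on $r$ proving them, together with the rearrangements \eqref{alts} (through \eqref{specialcase}) and Lemma~\ref{rearrangea_2}, hold $k$ fixed throughout and only ever invoke the hypothesis at triples sharing that same $k$. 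Consequently each proposition remains valid verbatim when $K$ is allowed to depend on $k$ alone, and I would simply run the argument separately for each $k$ and then take the union over $k\in\bz_+$.

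Concretely, the forward implication at a fixed triple $(r,s,k)$ proceeds by induction on $r$: one feeds in \eqref{first1} (resp. \eqref{second2}) at $(r,s,k)$ itself together with the inductive conclusion at the smaller pairs $(r',s')$ produced by \eqref{alts} (resp. Lemma~\ref{rearrangea_2}), all of which again satisfy $s'+r'\ge 1+kr'+K$ (resp. $s'+r'\ge \frac{1}{2}+(2k+1)r'+K$) for the same $k$, and concludes ${}_{k}\bx^-_{\alpha}(r,s)v=0$ (resp. $\bold y^{\alpha}_{\ge k+\frac{1}{2}}(r,s)v=0$). The backward implication is immediate from the same rearrangements, which write $\bx^-_{\alpha}(r,s)$ (resp. $\bold y^{\alpha}(r,s)$) as ${}_{k}\bx^-_{\alpha}(r,s)$ (resp. $\bold y^{\alpha}_{\ge k+\frac{1}{2}}(r,s)$) plus a sum of products whose rightmost factors are ${}_{k}\bx^-_{\alpha}(r',s')$ (resp. $\bold y^{\alpha}_{\ge k+\frac{1}{2}}(r',s')$) with $(r',s')$ still satisfying the bound for the same $k$, so that every such factor annihilates $v$ once \eqref{third1} (resp. \eqref{third2}) is assumed. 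The main obstacle will thus be purely the $k$--dependence of the bounds $\sum_{j\ge k+1}\xi^{\alpha}_j$ and $\phi(\xi^{\alpha};k)$, which superficially blocks a black-box appeal to Propositions~\ref{max} and~\ref{maxa_2}; the observation that both are in fact established one value of $k$ at a time removes it.
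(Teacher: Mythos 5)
Your proposal is correct and follows the paper's route exactly: the paper's entire proof of this proposition is the single line ``This is immediate from Proposition~\ref{max} and Proposition~\ref{maxa_2}.'' The subtlety you isolate --- that the constant $K$ in those propositions must be allowed to depend on $k$ (a literal black-box application with fixed $K$ would require vanishing hypotheses, e.g.\ $\bx^-_{\alpha}(r,s)v=0$ for all $s+r\ge 1+\sum_{j\ge k_0+1}\xi^{\alpha}_j$, that \eqref{first1} does not supply), and that this is harmless because the proofs via \eqref{alts} and Lemma~\ref{rearrangea_2} run their inductions at a fixed $k$ --- is genuine and is silently glossed over by the paper, so your rendering of the same argument is, if anything, more complete than the original.
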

\begin{pf} This is immediate from  Proposition~\ref{max} and Proposition~\ref{maxa_2}. 
\end{pf}
As a corollary we obtain that several elements are contained in the annihilator of $v_\bxi$.

\begin{cor} \label{cthird}
 For all ${\alpha}\in R^+$, $r\in\bn, k\in \bz_+$ with  $r\ge 1+ \sum_{j\ge k+1}\xi^{\alpha}_j$, we have \begin{equation}\label{demreltype}(x_{-\alpha+d_\a k\d})^{r}v_\bxi=0.\end{equation}
 Similarly for all $\a \in R^+_\ell$, $r\in\bn, k\in \bz_+$  with  $r\ge 1+ 2\phi(\xi^\a;k)$, we have \begin{equation}\label{demreltype2}(x_{-\frac{\alpha}{2}+(k+\frac{1}{2})\d})^{r}v_\bxi=0.\end{equation}

\end{cor}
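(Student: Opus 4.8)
The plan is to obtain both families of relations directly from the third presentation in Proposition~\ref{third}, in each case by specializing the defining relations to the one value of the second parameter at which the relevant operator degenerates to a divided power of a single root vector.

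For \eqref{demreltype}, recall from \eqref{specialcase} that $_{k}\bx^-_{\alpha}(r,kr)=(x_{-\alpha+d_\alpha k\delta})^{(r)}$. I would therefore put $s=kr$ in \eqref{third1}. For $k\ge 1$ this is admissible, since then $s=kr\in\bn$, and the hypothesis $s+r\ge 1+kr+\sum_{j\ge k+1}\xi^{\alpha}_j$ becomes exactly $r\ge 1+\sum_{j\ge k+1}\xi^{\alpha}_j$; hence $(x_{-\alpha+d_\alpha k\delta})^{(r)}v_\bxi=0$, and multiplying by $r!$ gives \eqref{demreltype}. The value $k=0$ is not covered by \eqref{third1} (it would force $s=0$), but there $\sum_{j\ge 1}\xi^{\alpha}_j=\lambda(\alpha^\vee)$ by $\lambda$-compatibility, so $(x_{-\alpha})^rv_\bxi=0$ for $r\ge 1+\lambda(\alpha^\vee)$ is immediate from the relation $x_{-\alpha}^{\lambda(\alpha^\vee)+1}v_\bxi=0$ in \eqref{vxi}.

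For \eqref{demreltype2} I would carry out the analogous specialization of \eqref{third2}, taking the pair $(r',s')=\big(r/2,(2k+1)r/2\big)$; this lies in $\bold N$ because $r',s'\in\frac12\bn$ and $r'+s'=(k+1)r\in\bn$. The key point is that at this extremal value the indexing set $\tilde{\bs}_{\ge k+\frac12}(r',s')$ is a singleton. Indeed, for any $\bold p$ in it the two defining identities give
$$s'-(2k+1)r'=\sum_{N\ge k}(N-k)\,p_{N+\frac12}+\sum_{N\ge k+1}(2N-2k-1)\,p_{N},$$
and every coefficient on the right is nonnegative and is strictly positive except the one at the index $k+\frac12$. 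Since $s'=(2k+1)r'$ the left side vanishes, forcing $p_i=0$ for all $i\ne k+\frac12$ and hence $p_{k+\frac12}=2r'=r$. Substituting this unique sequence into \eqref{ydefn} yields
$$\bold y^{\alpha}_{\ge k+\frac12}\big(r/2,(2k+1)r/2\big)=\frac{(-1)^{r}}{2^{kr}}\,x_{-\frac{\alpha}{2}+(k+\frac12)\delta}^{(r)},$$
a nonzero scalar multiple of $x_{-\frac{\alpha}{2}+(k+\frac12)\delta}^{(r)}$. With $s'=(2k+1)r'$ the hypothesis of \eqref{third2} reduces to $r'\ge\frac12+\phi(\xi^{\alpha};k)$, i.e.\ $r\ge 1+2\phi(\xi^{\alpha};k)$, so \eqref{third2} gives $x_{-\frac{\alpha}{2}+(k+\frac12)\delta}^{(r)}v_\bxi=0$ and \eqref{demreltype2} follows.

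The only real computation is the extremality argument above: recognizing $\big(r/2,(2k+1)r/2\big)$ as the pair at which $\bold y^{\alpha}_{\ge k+\frac12}$ collapses to a single divided power, and verifying that the minimizing sequence is unique. This is the exact analogue for $\bold y^{\alpha}$ of the identity $_{k}\bx^-_{\alpha}(r,kr)=(x_{-\alpha+d_\alpha k\delta})^{(r)}$ from \eqref{specialcase}, and it is of the same type as the estimates in Lemma~\ref{lemmas}, only run as a lower bound rather than an upper bound; everything else is substitution into Proposition~\ref{third}.
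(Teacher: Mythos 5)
Your proof is correct and follows essentially the same route as the paper: for \eqref{demreltype} the paper also specializes $s=kr$ in \eqref{third1} and invokes \eqref{specialcase} (dismissing $k=0$ as already contained in \eqref{vxi}), and for \eqref{demreltype2} it also takes the pair $\big(r/2,(2k+1)r/2\big)$ in \eqref{third2} and uses the identity $\bold y^{\alpha}_{\ge k+\frac12}\big(r/2,(2k+1)r/2\big)=\frac{(-1)^{r}}{2^{kr}}\,x_{-\frac{\alpha}{2}+(k+\frac12)\delta}^{(r)}$. The only difference is that you prove this collapse identity (via the singleton-support argument), whereas the paper simply asserts it, so your write-up is a slightly more complete version of the same argument.
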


\begin{pf}
In order to prove the first statement, we take  $s=kr$ in \eqref{third1}. Note that there is nothing to prove if $k=0$. We have $s+r\ge 1+kr+ \sum_{j\ge k+1}\xi^{\alpha}_j$ and  using the  second equation in  \eqref{specialcase}, 
 we get $$ _{k}\bx^-_\alpha(r, kr)v_\bxi= (x_{-\alpha+d_\a k\d})^{(r)}v_\bxi=0.$$
To see the second equation, take $s=(2k+1)\frac{r}{2}$. Then  we have $s+\frac{r}{2}\ge \frac{1}{2}+(2k+1)\frac{r}{2}+\phi(\xi^{\alpha};k).$ Now using 
$$\bold y^{\alpha}_{\ge k+\frac{1}{2}}(r/2, (2k+1)r/2)v_\bxi=\frac{(-1)^r}{2^{kr}}(x_{-\frac{1}{2}\alpha+(k+\frac{1}{2})\d})^{(r)}v_\bxi,$$
we get the desired result.\par
\end{pf}

Summarized we have defined indecomposable finite--dimensional graded modules $V(\bxi)$ for tuples of partitions and gave three equivalent presentations of these. In the next section we establish connections to Demazure modules.



\section{Simplified presentation and connection to Demazure modules}\label{section3}

In this section, we shall simplify the defining relations described in the previous section for  special kinds of $\lambda$--compatible partitions. By using the simplified presentation we will make connections to well--known $\CG$--modules, the twisted Demazure modules. These results were known before for the untwisted current algebras in \cite{CV13} and are new for the twisted current algebras. For the special twisted current algebras see Section~\ref{section4}.

\subsection{}  If  $i_1>\cdots> i_r$ are the  distinct  non--zero parts of a partition $\xi$ and  $i_k$ occurs $s_k$ times
 then  we denote this partition for simplicity by $(i_1^{s_1},\dots, i_r^{s_r} )$.
A  partition $\xi$ is said to be rectangular if it is either the empty partition or of the form $(k^m)$ for some $k,m\in\bn$. A partition is said to be a fat hook if it is of the form $(k_1^{s_1}, k_2^{s_2})$ with $k_j,s_j\in\bn$, $j=1,2$ and special fat hook  if $s_2=1$.  We simply call a partition $\xi$
special when it is a special fat hook. For rectangular and special partitions the following theorem gives the simplified relations.

\begin{thm}\label{genmax2} Let $\bxi=(\xi^{\alpha})_{\alpha\in R^+}$ be a $\lambda$--compatible $|R^+|$--tuple of partitions. Assume  that $\xi^{\alpha}$ is  either rectangular or special for $\alpha\in R^+$. Then, $V(\bxi)$ is isomorphic to the quotient of $W_{\loc}(\lambda)$ by the submodule generated by the elements
\begin{align}\label{finer1}&\big\{ (x_{-\alpha+d_{\alpha}s_{\alpha}\delta})w_\lambda:\alpha\in R^+\big\}
\ \bigcup \big\{(x_{-\alpha+d_{\alpha}(s_{\alpha}-1)\delta})^{\xi^\alpha_{s_\alpha}+1}w_\lambda:\alpha\in R^+, \ \ \xi^{\alpha} \ {\rm{{special}}}\big\}\\
\label{finer2}
&\big\{ (x_{-\frac{\alpha}{2}+(s_{\alpha}+\frac{1}{2})\delta})w_\lambda
:\alpha\in R_{\ell}^+\big\}
\bigcup
\big\{(x_{-\frac{\alpha}{2}+(s_{\alpha}-\frac{1}{2})\delta})^{2\phi(\xi^{\alpha};s_{\alpha}-1)+1}w_\lambda:\alpha\in R_{\ell}^+, \ \ \xi^{\alpha} \ {\rm{{special}}}\big\}
\end{align} \vskip 6pt
\end{thm}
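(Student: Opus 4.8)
The plan is to identify two submodules of $W_{\loc}(\lambda)$ and show they coincide: the submodule $N$ generated by the defining relations \eqref{third1}--\eqref{third2} of $V(\bxi)$, and the submodule $N'$ generated by the elements displayed in \eqref{finer1}--\eqref{finer2}. The inclusion $N'\subseteq N$ is immediate from \corref{cthird}, which shows that every generator of $N'$ maps to $0$ in $V(\bxi)=W_{\loc}(\lambda)/N$ and hence lies in $N$: indeed $(k,r)=(s_\alpha,1)$ and, in the special case, $(k,r)=(s_\alpha-1,\xi^\alpha_{s_\alpha}+1)$ in its first statement reproduce \eqref{finer1}, while $(k,r)=(s_\alpha,1)$ and $(k,r)=(s_\alpha-1,2\phi(\xi^\alpha;s_\alpha-1)+1)$ in its second statement reproduce \eqref{finer2}, using that $\xi^\alpha_j=0$ and $\phi(\xi^\alpha;j)=0$ for $j\ge s_\alpha$. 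Thus $V(\bxi)$ is a quotient of $M:=W_{\loc}(\lambda)/N'$, and the whole content of the theorem is the reverse inclusion $N\subseteq N'$: writing $\bar v$ for the image of $w_\lambda$ in $M$, I must show that all the relations \eqref{third1} and \eqref{third2} already hold on $\bar v$.

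I would first dispose of the first family \eqref{third1} by reduction to rank one. By \propref{sp2}(i) the elements $x_{\pm\alpha+r\delta}$ span a copy of $\mathfrak{sl}_2[t]$ for each $\alpha\in R^+$, and both \eqref{third1} and the generators in \eqref{finer1} lie in the action of this subalgebra on $\bar v$. Under this identification $\bar v$ satisfies exactly the hypotheses of the $\mathfrak{sl}_2[t]$--module studied in \cite{CV13}: it is highest weight of weight $\lambda(\alpha^\vee)=|\xi^\alpha|$, it satisfies the top relation $x_{-\alpha+d_\alpha s_\alpha\delta}\bar v=0$, and in the special case the single power relation. That \eqref{third1} follows from these is precisely the rectangular/special simplification of \cite{CV13}, which I would invoke for each $\mathfrak{sl}_2[t]$ in turn. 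Applying $\CH_+\bar v=0$ to the top relation also yields the monotonicity $x_{-\alpha+d_\alpha N\delta}\bar v=0$ for all $N\ge s_\alpha$, which is reused below.

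The genuinely new case is the long--root family \eqref{third2}, governed by the rank--one twisted current algebra of type $\tt A_2^{(2)}$ via \propref{sp2}(ii); here the lowering operators occur in two flavours, the half--integral $x_{-\frac\alpha2+(N+\frac12)\delta}$ and the integral $x_{-\alpha+2N\delta}$, which the elements $\bold y^\alpha(r,s)$ intertwine. I would prove \eqref{third2} by induction on $r$, using the rearrangement of \lemref{rearrangea_2} together with \propref{maxa_2}, and the support estimates of \lemref{lemmas} to control which factors survive. The explicit seed is the top--grade specialization $s=(2k+1)r/2$, where $\bold y^\alpha_{\ge k+\frac12}(r/2,(2k+1)r/2)$ collapses to the single power $(x_{-\frac\alpha2+(k+\frac12)\delta})^{(r)}$ exactly as in \corref{cthird}; such powers annihilate $\bar v$ because the first--order relation $x_{-\frac\alpha2+(s_\alpha+\frac12)\delta}\bar v=0$ of \eqref{finer2} is propagated to all indices $\ge s_\alpha$ by $\CH_+\bar v=0$, and because the power generator of \eqref{finer2} kills the index $s_\alpha-1$. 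The remaining, lower--grade instances of \eqref{third2} (including the pure powers at indices below $s_\alpha-1$) are then reached from these seeds through \lemref{rearrangea_2}, feeding in at each step the integral relations \eqref{third1} for $x_{-\alpha+2N\delta}$ secured in the previous paragraph.

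The main obstacle is exactly this $\tt A_2^{(2)}$ induction. Unlike the $\mathfrak{sl}_2[t]$ case there are two non-commuting families of lowering operators --- a bracket of two half--integral ones produces an integral one --- so one cannot argue one monomial at a time and must instead track, through \lemref{rearrangea_2} and \lemref{lemmas}, how the single power generator interacts with the family--one relations to generate the intermediate--grade relations. The delicate point is to verify that the numerical bound $s+r\ge\frac12+(2k+1)r+\phi(\xi^\alpha;k)$ is exactly the one propagated at every stage of the induction, i.e. that $\phi(\xi^\alpha;k)$ is the correct bookkeeping function; this is where the precise definition of $\phi$ and the rectangular/special hypothesis enter decisively. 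Once both families are in place we obtain $N\subseteq N'$, and together with the easy inclusion this gives the isomorphism $V(\bxi)\cong M$.
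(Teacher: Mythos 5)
Your overall strategy (comparing the two submodules via Corollary~\ref{cthird} for the easy inclusion) and your treatment of the first family \eqref{third1} --- restriction to the subalgebras $\mathfrak{sl}_2(\alpha)\cong\mathfrak{sl}_2[t]$ of Proposition~\ref{sp2}(i) and invocation of the rectangular/special simplification of \cite{CV13} --- coincide with the paper's proof. The genuine gap is in the long--root family, which is exactly the new content of the theorem. Your proposed mechanism (induction on $r$ through Lemma~\ref{rearrangea_2} and Proposition~\ref{maxa_2}, seeded by the pure powers $(x_{-\frac{\alpha}{2}+(k+\frac{1}{2})\delta})^{(r)}$ at indices $k\geq s_\alpha-1$) cannot reach the relations with cut parameter $k<s_\alpha-1$. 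Proposition~\ref{maxa_2} is an equivalence between the two \emph{infinite} families $\{\bold y^{\alpha}(r,s)\}$ and $\{\bold y^{\alpha}_{\geq k+\frac{1}{2}}(r,s)\}$ subject to the \emph{same} numerical bounds; its proof presupposes that one of the families annihilates the vector, so it cannot bootstrap either family out of the finite set \eqref{finer2}. Likewise, the conclusion of Lemma~\ref{rearrangea_2} (that only pairs $(r',s')$ satisfying the propagated bound survive) requires the hypothesis $s+r\geq(2k+1)r+K$ \emph{for the same $k$ at which you cut}. If you cut at $s_\alpha-\frac{1}{2}$ so as to use your seeds, you need the bound with $k=s_\alpha-1$; but since $\phi(\xi^{\alpha};k)$ decreases in $k$ while $(2k+1)r$ increases, the bound at some $k<s_\alpha-1$ does not imply the bound at $s_\alpha-1$ (e.g.\ for $\xi^{\alpha}=(10^3,1)$, the pair $(r,s)=(4,15)$ satisfies the bound for $k=1$ but not for $k=2=s_\alpha-1$), so the surviving high parts may be pure powers of order $\leq 2\phi(\xi^{\alpha};s_\alpha-1)$, which do not vanish. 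If instead you cut at $k+\frac{1}{2}$ with $k<s_\alpha-1$, the standalone term $\bold y^{\alpha}_{\geq k+\frac{1}{2}}(r,s)$ consists of mixed monomials supported in $[k+\frac{1}{2},s_\alpha)$ --- interleaved products of $x_{-\frac{\alpha}{2}+(N+\frac{1}{2})\delta}$ and $x_{-\alpha+2N\delta}$ that are neither pure powers nor expressible through the elements ${}_{k}\bx^-_{\alpha}(r,s)$ of \eqref{third1} --- and no generator of your $N'$ controls them, so the induction does not close. You acknowledge this yourself when you defer ``the delicate point'' of checking that $\phi(\xi^{\alpha};k)$ propagates; that deferred point is the actual theorem.

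What is needed at this stage (and what the paper does) is a direct per--monomial numerical analysis rather than an induction. First, the case $2r\geq\xi^{\alpha}_0$ is disposed of by Lemma~\ref{gara_2} and the local Weyl relation $x_{-\alpha}^{\lambda(\alpha^\vee)+1}w_\lambda=0$, since then $s+r\geq\frac{1}{2}+\lambda(\alpha^{\vee})$ forces $s+r\geq 1+\lambda(\alpha^{\vee})$. For $2r<\xi^{\alpha}_0$ one discards all monomials meeting $[s_\alpha,\infty)$ (these lie in $U$ by \eqref{finer1}--\eqref{finer2} and $\CH_+$--propagation) and then shows, for every $\bold p\in\tilde{\bs}_{<s_\alpha}(r,s)$: either $\tilde{\bs}_{<s_\alpha}(r,s)=\emptyset$ (the cases $k\geq s_\alpha$ and $\xi^{\alpha}$ rectangular, via Lemma~\ref{lemmas}), or the bound $s+r\geq\frac{1}{2}+(2k+1)r+\phi(\xi^{\alpha};k)$ combined with the defining equations of $\tilde{\bs}(r,s)$ forces $p_{s_\alpha-\frac{1}{2}}\geq 2\phi(\xi^{\alpha};s_\alpha-1)+1$ or $p_{s_\alpha-1}\geq\xi^{\alpha}_{s_\alpha}+1$, so that the rightmost factors of the monomial already carry $w_\lambda$ into $U$ by the power generators of \eqref{finer2} resp.\ \eqref{finer1}. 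This estimate, split into the cases $\xi^{\alpha}_{s_\alpha}\geq\frac{1}{2}\xi^{\alpha}_{s_\alpha-1}$, $r\leq\xi^{\alpha}_{s_\alpha}$, and $\xi^{\alpha}_{s_\alpha}<r<\frac{1}{2}\xi^{\alpha}_{s_\alpha-1}$, is where the rectangular/special hypothesis and the precise definition of $\phi$ do their work, and it is precisely what your proposal leaves out.
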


\begin{pf}
We denote by $U$ the submodule of $W_{\loc}(\lambda)$ generated by the elements \eqref{finer1} and \eqref{finer2} and
let $\tilde V(\bxi)$ be the corresponding quotient of $W_{\loc}(\lambda)$. Corollary~\ref{cthird} proves that we have a surjective map 
$$\tilde V(\bxi)\longrightarrow V(\bxi)\longrightarrow 0.$$  
In order to prove that we have a surjective map from $V(\bxi)$ to $\tilde V(\bxi)$ we shall prove that all defining relations of $V(\bxi)$ are satisfied in $\tilde V(\bxi)$. We start with the relations in \eqref{firstshort}. For $\alpha\in R^+$ we consider the subalgebra $\mathfrak{sl}_2(\alpha)$ of $\CG$ generated by $\{x_{\pm\alpha+p\delta}~:~p\in\mathbb{Z}_+, \a+p\delta\in \widehat{R} \}$ and let $\bu(\mathfrak{sl}_2(\alpha))$ be the corresponding universal enveloping algebra, which is by Proposition~\ref{sp2}(i) isomorphic to $\bu(\mathfrak{sl}_2[t])$. Furthermore, let $V_{\mathfrak{sl}_2}(\bxi_{\alpha})$ be the $\mathfrak{sl}_2[t]$ module associated to a single partition $(\widetilde{\xi}^{\alpha})$ defined in \cite[Section 2]{CV13}. Since the defining relations of $V_{\mathfrak{sl}_2}(\bxi_{\alpha})$ are simplified in \cite[Theorem 3.1]{CV13} we obtain a surjective map
$$V_{\mathfrak{sl}_2}(\bxi_{\alpha})\twoheadrightarrow \bu(\mathfrak{sl}_2(\alpha))\tilde{v}_{\bxi}.$$
In particular, all relations in $V_{\mathfrak{sl}_2}(\bxi_{\alpha})$ hold in $\bu(\mathfrak{sl}_2(\alpha))\tilde{v}_{\bxi}\subseteq \tilde V(\bxi)$, which implies that \eqref{firstshort} holds in $\tilde V(\bxi)$.
Now we prove that the relations in \eqref{thirdlong} are satisfied, by considering several cases. We start with $2r\ge \xi^{\alpha}_0$ and $k$ is arbitrary. Then we get
$$s+r\geq \frac{1}{2}+(2k+1)r+\phi(\xi^{\alpha};k)\geq \frac{1}{2}+k\xi^{\alpha}_0+\frac{1}{2}\xi^{\alpha}_0+\phi(\xi^{\alpha};k)\geq
\frac{1}{2}+\lambda(\alpha^{\vee}).$$
Thus $\bold y^{\alpha}(r,s)w_{\lambda}=0$, since $s+r\in \bn$. So we can assume from now on $2r < \xi^{\alpha}_0$. By \eqref{finer1} and \eqref{finer2} it is easy to see that 
$$x_{-\alpha+d_\alpha s\delta}w_\lambda, \ \ x_{-\frac{\alpha}{2}+(s+\frac{1}{2})\delta}w_\lambda \in U, \ \ \forall \ s\ge s_\alpha,$$ 
and hence we have
$$\big(\bold y^{\alpha}(r,s)-\bold y^{\alpha}_{< s_{\alpha}}(r,s)\big)w_\lambda \in U.$$
We shall prove that we already have $\bold y^{\alpha}_{< s_{\alpha}}(r,s)w_\lambda\in U$, which would imply $\bold y^{\alpha}(r,s)w_\lambda\in U$. If $k\geq s_{\alpha}$ we get
$s\geq \frac{1}{2}+2kr> 2s_{\alpha}r$ and hence $\bold y^{\alpha}_{<s_{\alpha}}(r,s)=0$ by Lemma~\ref{lemmas}. So in addition to $2r < \xi^{\alpha}_0$ we can assume from now on $k\leq s_{\alpha}-1$.
If $\xi^{\alpha}$ is rectangular, then we obtain
$$s\geq \frac{1}{2}+2kr+\phi(\xi^{\alpha};k)\geq \frac{1}{2}+2kr+(s_{\alpha}-k-1)2r+r\geq \frac{1}{2}+2(s_{\alpha}-1)r+\sum_{0\le N} \frac{1}{2}p_{N+\frac{1}{2}},$$
which gives once more that $\bold y^{\alpha}_{<s_{\alpha}}(r,s)=0$. Summarizing, the claim follows if $2r\geq \xi^{\alpha}_0$ or if $\xi^{\alpha}$ is rectangular.
So consider the remaining case $k\leq s_{\alpha}-1$, $2r< \xi^{\alpha}_0$ and $\xi_{s_{\alpha}-1}^{\alpha}\neq\xi_{s_\alpha}^{\alpha}$. Again we consider several cases, where we start with $\xi^{\a}_{s_\a}\ge \frac{\xi^{\a}_{s_\a-1}}{2}$. Let $(p_i)_{i\in\frac{1}{2}\bz_+}\in \tilde\bs_{<s_{\alpha}}(r,s)$. If we prove that 
$p_{s_{\alpha}-\frac{1}{2}}\geq 2\xi^{\a}_{s_\a}-\xi^{\a}_{s_\a-1}+1$, then it is immediate that $\bold y^{\alpha}_{< s_{\alpha}}(r,s)w_\lambda\in U$. Indeed
\begin{align*}
\frac{1}{2}+2kr+\phi(\xi^{\alpha};k)\leq s&\leq \sum_{N\leq s_{\alpha}-1} \frac{2N+1}{2}p_{N+\frac{1}{2}} +\sum_{N\leq s_{\alpha}-1} 2Np_N
&\\&\leq (s_{\alpha}-\frac{1}{2})p_{s_{\alpha}-\frac{1}{2}}+2(s_{\alpha}-1)(r-\frac{1}{2}p_{s_{\alpha}-\frac{1}{2}})&\\&
\leq \frac{1}{2}p_{s_{\alpha}-\frac{1}{2}}+2r(s_{\alpha}-1).
\end{align*}
Thus we get
\begin{align*}
\frac{1}{2}p_{s_{\alpha}-\frac{1}{2}}&\ge \frac{1}{2}+2r(k-(s_{\alpha}-1))+\phi(\xi^{\alpha};k)&\\&
\ge \frac{1}{2}-2r((s_{\alpha}-1)-k)+\begin{cases}\sum_{s_\a>j\geq k+1} \xi^{\alpha}_{j}+\xi^\a_{s_\a}-\frac{1}{2}\xi^{\alpha}_{k+1},& \text{if $k<s_\a-1$}\\
\xi^\a_{s_\a}-\frac{1}{2}\xi^{\alpha}_{s_\a-1},& \text{if $k=s_\a-1$}
\end{cases}
&\\&
\ge \frac{1}{2}+\sum_{s_\a >j\geq k+1} (\xi^{\alpha}_{j}-2r)+\xi^\a_{s_\a}-\frac{1}{2}\xi^{\alpha}_{s_\a-1}\ge \frac{1}{2}+\xi^\a_{s_\a}-\frac{1}{2}\xi^{\alpha}_{s_\a-1}.
\end{align*}

We consider the remaining case $\xi^{\a}_{s_\a}<\frac{\xi^{\a}_{s_\a-1}}{2}.$ In this situation we will have two cases. First we consider 
$r\le \xi^\a_{s_\a}$. The earlier calculation shows that
$$\frac{1}{2}p_{s_{\alpha}-\frac{1}{2}}\ge \frac{1}{2}+2r(k-(s_{\alpha}-1))+\phi(\xi^{\alpha};k).$$
For $k=s_\a-1$ we get $\frac{1}{2}p_{s_{\alpha}-\frac{1}{2}}\ge \frac{1}{2}$ and for $k\le s_\a-2$ we get
$$\frac{1}{2}p_{s_{\alpha}-\frac{1}{2}}\ge \frac{1}{2}+\sum_{s_\a >j\geq k+2} (\xi^{\alpha}_{j}-2r)+(\frac{1}{2}\xi^{\alpha}_{s_\a-1}-r)+(\xi^\a_{s_\a}-r)\ge \frac{1}{2}.$$ Thus in either case $p_{s_{\alpha}-\frac{1}{2}}\ge 1.$ So we are done in this case.

Now let us consider the last case when $\xi^{\a}_{s_\a}<r<\frac{\xi^{\a}_{s_\a-1}}{2} $. If $p_{s_{\alpha}-\frac{1}{2}}\neq 0$ then we are done, so assume that $p_{s_{\alpha}-\frac{1}{2}}=0.$  If we prove that $p_{s_{\alpha}-1}\geq \xi^{\alpha}_{s_{\alpha}}+1$ then it follows that
$\bold y^{\alpha}_{< s_{\alpha}}(r,s)w_\lambda\in U$. Again using similar calculations we get,
\begin{align*}
\frac{1}{2}+2kr+\phi(\xi^{\alpha};k)\leq s&\leq \sum_{N\leq s_{\alpha}-2} \frac{2N+1}{2}p_{N+\frac{1}{2}} +\sum_{N\leq s_{\alpha}-1} 2Np_N
&\\&\leq 2(s_{\alpha}-1)p_{s_{\alpha}-1}+(2(s_{\alpha}-2)+1)(r-p_{s_{\alpha}-1})&\\&
\leq p_{s_{\alpha}-1}+r(2(s_{\alpha}-2)+1)
\end{align*}
which implies $p_{s_{\alpha}-1}\ge \frac{1}{2}+2r(k-(s_{\alpha}-1))+r+\phi(\xi^{\alpha};k).$
For $k=s_\a-1$ we obtain $p_{s_{\alpha}-1}\ge \frac{1}{2}+r\ge \frac{1}{2}+\xi^\a_{s_\a}$ and hence $p_{s_{\alpha}-1}\ge 1+\xi^\a_{s_\a}.$
For $k\le s_\a-2$ we get
\begin{align*}
p_{s_{\alpha}-1}&\ge \frac{1}{2}-2r(s_{\alpha}-k-2)+(\frac{1}{2}\xi^{\alpha}_{k+1}-r)+\sum_{s_\a>j\geq k+2} \xi^{\alpha}_{j}+\xi^\a_{s_\a}&\\&
\ge \frac{1}{2}+(\frac{1}{2}\xi^{\alpha}_{k+1}-r)+\sum_{s_\a >j\geq k+2} (\xi^{\alpha}_{j}-2r)+\xi^\a_{s_\a}\ge \frac{1}{2}+\xi^\a_{s_\a}
\end{align*}
Since $p_{s_{\alpha}-1}\in \bz_+$, we have $p_{s_{\alpha}-1}\ge 1+\xi^\a_{s_\a}$. This completes the proof.
\end{pf}

\subsection{}
Here we shall give special partitions, such that our corresponding module $V(\bxi)$ is isomorphic to a well-known graded representation for the hyperspecial twisted current algebra. Let $\widehat{V}(\Lambda)$ be the unique irreducible  highest weight  $\Gaff$-module with highest weight $\Lambda$. For each $w\in \widehat{W}$ the weight space $\widehat{V}(\Lambda)_{w(\Lambda)}$ is one-dimensional and the $\Baff$-module generated by $\widehat{V}(\Lambda)_{w(\Lambda)}$ is called the Demazure module, denoted by $\widehat{V}_w(\Lambda)$. 
Since the structure of the Demazure module depends only on $w(\Lambda)=-\lambda+\ell \Lambda_0+i\delta$ we shall write $\D(\ell,\lambda)[i]$ for $\widehat{V}_w(\Lambda)$. These modules  are  called level $\ell$ Demazure modules and are finite dimensional $\Baff$-modules. 
Although $\D(\ell,\lambda)[i]$ are by definition only $\Baff$-modules, for dominant $\lambda$ they admit a $\CG$--module structure which is independent if $i$ and hence for simplicity we denote them by $\D(\ell,\lambda)$. In this paper we will be concerned with $\CG$--stable Demazure modules.
More generally, one can associate a Demazure module with any element of $\widetilde{W}$ as follows: For $\Lambda\in \widehat{ P}^+$, $\sigma \in \Sigma$ and $w\in \widehat{W}$, define
$$ \widehat{V}_{w\sigma}(\Lambda)=\widehat{V}_w(\sigma \Lambda).$$

Demazure modules can also be presented as cyclic modules that have an explicit description of the annihilator of the generating element. The description of the Demazure modules for finite dimensional simple Lie algebras \cite[Theorem 3.4]{J85}, \cite[Proposition Fondamentale 2.1]{P89} was extended in \cite[Lemme 26]{M88} for Demazure modules associated to Kac-Moody Lie algebras. We record below the statement that is relevant for us for $\CG$--stable Demazure modules.
\begin{thm}\label{demq}
Let $\l\in P^+$. The Demazure module $\D(\ell,\l)$ is the graded $\CG$-module generated by an element $v_{\ell,\l}$ with the relations
\begin{align*}
x_\a^{k_\a+1} \cdot v_{\ell,\l}&=0,\ \a\in \widehat{R}_{\rm re}^+\cup R^-, ~k_\a=\max\{0,-(\l+\ell\Lambda_0,\a^\vee)\},&\\&
\CH_+ \cdot v_{\ell,\l}=0,\ \
h\cdot v_{\ell,\l}=\l(h)v_{\ell,\l},\ h\in\H.
\end{align*}
\hfill\qedsymbol
\end{thm}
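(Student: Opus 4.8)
The plan is to follow the Joseph--Polo--Mathieu strategy for presenting a Demazure module by generators and relations, specialized to the $\CG$--stable situation. Write $M$ for the abstract graded $\CG$--module on a generator $u$ subject to the listed relations, and let $v_{\ell,\l}\in\D(\ell,\l)=\widehat V_w(\Lambda)$ be the extremal weight vector of $\H$--weight $\l$; its full weight is $\l+\ell\Lambda_0+j\delta$ for the appropriate grade $j$, and it spans a one--dimensional weight space lying in the $\widehat W$--orbit of $\Lambda=\ell\Lambda_0+\l$.

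First I would produce a surjection $M\twoheadrightarrow\D(\ell,\l)$ by checking that $v_{\ell,\l}$ satisfies every relation. This is forced by extremality together with integrability of $\widehat V(\Lambda)$: for each real root $\alpha$ the vector $v_{\ell,\l}$ sits at an end of its $\alpha$--string, of length $|(\l+\ell\Lambda_0,\alpha^\vee)|+1$, so exactly $x_\alpha^{k_\alpha+1}v_{\ell,\l}=0$ with $k_\alpha=\max\{0,-(\l+\ell\Lambda_0,\alpha^\vee)\}$ for all $\alpha\in\widehat{R}_{\rm re}^+\cup R^-$ (here one uses $(\delta,\alpha^\vee)=0$). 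The Cartan relation is the definition of the weight, and $\CH_+v_{\ell,\l}=0$ because each imaginary root vector shifts the weight by a positive multiple of $\delta$, out of the weight support of $\widehat V(\Lambda)$. Finally $\D(\ell,\l)$ is $\CG$--cyclic on $v_{\ell,\l}$: the lowest extremal vector $v_{w\Lambda}$ lies in $\bu(\lie n^-)v_{\ell,\l}$, and $\widehat V_w(\Lambda)=\bu(\widehat{\lie n}^+)v_{w\Lambda}$ with $\widehat{\lie n}^+\subset\CG$, so $\bu(\CG)v_{\ell,\l}=\D(\ell,\l)$. Hence the relations define a surjection $M\twoheadrightarrow\D(\ell,\l)$.

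The substantive direction is that this surjection is an isomorphism, i.e. that the listed relations already generate the whole annihilator of $u$. The natural route is a graded--character comparison: the character of $\D(\ell,\l)$ is given by the Demazure character formula, a composite of Demazure operators $D_{i_1}\cdots D_{i_N}$ (for a reduced word of $w$) applied to $e^{\Lambda}$, and one must bound $\ch M$ above by this expression. Following Joseph one would do this recursively along the word: equip $M$ with a filtration whose subquotients are pullbacks of lower Demazure modules and show that the step from $w$ to $s_iw$ multiplies the character by $D_i$, matching the module recursion $\widehat V_{s_iw}(\Lambda)=\bu(\lie g_{\alpha_i})\widehat V_w(\Lambda)$.

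I expect the main obstacle to be exactly this upper bound on $M$, that is, ruling out extra relations without already knowing the dimension. In the Kac--Moody generality this is precisely the content of Mathieu's argument, which replaces a naive character estimate by the geometry of the affine Schubert variety attached to $w$ and its Frobenius splitting, so that the scheme defined by the relations is reduced of the expected dimension; this is why the statement is recorded directly from \cite[Lemme 26]{M88}, extending \cite{J85,P89}. In practice I would invoke that theorem rather than reprove it, and the only genuinely new bookkeeping is to confirm that the index set $\widehat{R}_{\rm re}^+\cup R^-$ of the relations is exactly the set of real roots $\alpha$ with $\widehat{\lie g}_\alpha\subset\CG$, which is the identity $\widehat{R}_{\rm re}(+)\cup\widehat{R}_{\rm re}(-)=\widehat{R}^+_{\rm re}\cup R^-$ noted in Section~\ref{section1}.
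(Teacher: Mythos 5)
Your overall strategy is, in substance, the paper's own: Theorem~\ref{demq} is recorded there without proof, as the specialization to $\CG$--stable Demazure modules of Mathieu's \cite[Lemme 26]{M88} (extending \cite{J85,P89}), and your verification of the routine half --- that $v_{\ell,\lambda}$ satisfies the listed relations by extremality and integrability of $\widehat{V}(\Lambda)$, and that $\D(\ell,\lambda)=\bu(\CG)v_{\ell,\lambda}$ --- is correct and fills in what the citation leaves implicit.

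There is, however, a genuine gap in the other half. Your closing claim that, once Mathieu is invoked, ``the only genuinely new bookkeeping'' is the index-set identity $\widehat{R}_{\rm re}(+)\cup\widehat{R}_{\rm re}(-)=\widehat{R}^+_{\rm re}\cup R^-$ is not accurate. Mathieu's lemma (like the results of Joseph and Polo it extends) presents $\widehat{V}_w(\Lambda)$ as a cyclic module over the Borel $\widehat{\lie b}$, generated by the extremal line of weight $w(\Lambda)=-\lambda+\ell\Lambda_0+i\delta$, with exponents $\max\{0,-(w(\Lambda),\alpha^\vee)\}$ indexed by $\alpha\in\widehat{R}^+_{\rm re}$; the theorem you are proving presents the same space as a cyclic module over the larger algebra $\CG$, generated by the \emph{opposite} extremal vector, of $\lie h$--weight $+\lambda$, with relations indexed by $\widehat{R}^+_{\rm re}\cup R^-$. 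Quoting Mathieu therefore does not by itself bound your abstract module $M$: you must locate Mathieu's generator and relations inside $M$. Concretely, the relations for $\alpha\in R^+\cup R^-$ force $\bu(\lie g)u\cong V(\lambda)$ and make $M$ a locally finite $\lie g$--module; setting $u^-=\tilde{w}_0u$ for a lift $\tilde{w}_0$ of the longest element $w_0$ (which acts on $M$ by local finiteness), one gets $\lie n^-u^-=0$, hence $M=\bu(\CG)u=\bu(\widehat{\lie n}^+)u^-$ by PBW; conjugating your relations by $\tilde{w}_0$, and using $w_0\delta=\delta$ together with $(\lambda+\ell\Lambda_0,(w_0\alpha)^\vee)=(w_0(\lambda+\ell\Lambda_0),\alpha^\vee)$, one checks that $u^-$ satisfies exactly Mathieu's relations, whence $\dim M\le\dim\D(\ell,\lambda)$ and your surjection is an isomorphism. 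This change-of-generator argument --- carried out in \cite{FoL07} for the untwisted case and in \cite{CIK14,FK11} in twisted settings --- is the actual content of ``specializing'' the Kac--Moody statement to the $\CG$--stable situation, and it is not subsumed by the index-set identity you point to.
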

For the sake of completeness we will write the exponents $~k_\a$ explicitly. We have 
\begin{align}\label{gleich} &k_{\alpha}=0 \ \text{ if } \alpha\in \widehat R_{\rm re}(+), ~k_{-\alpha+d_{\alpha}r\delta}=\max\{0,\lambda(\alpha^\vee)-\ell r\},\ \mbox{and} \\
\label{gleich2}&k_{-\frac{\alpha}{2}+(r+\frac{1}{2})\delta}=\max\{0,2\lambda(\alpha^\vee)-\ell(2r+1)\}.\end{align}
\subsection{}
For any $\alpha\in R^+$ we write 
$\lambda(\alpha^{\vee})=(s_{\alpha}-1)\ell+m_{\alpha},\ 0<m_{\alpha}\leq \ell.$
If $\lambda(\alpha^{\vee})=0$ set $s_{\alpha}=0=m_{\alpha}$. Denote by $\bxi(\ell,\lambda)=(\xi^{\alpha})_{\alpha\in R^+}$ be the $|R^+|$-tuple of partitions given by:
$\xi^{\alpha}$ is the empty partition if $\lambda(\alpha^{\vee})=0$ and otherwise, is the partition $(\ell^{s_{\alpha}},m_{\alpha})$. In the rest of this section we want to relate the modules $V(\bxi(\ell,\lambda))$ with the Demazure modules $\D(\ell,\lambda)$. In particular, we obtain the following theorem.
\begin{thm}\label{dem2}
Let $\ell \in \bn$ and $\lambda\in P^+$. We have an isomorphism of graded $\CG$ modules $V(\bxi(\ell,\lambda))\cong \D(\ell,\lambda)$. More precisely, the $\CG$ stable Demazure module is the quotient of $W_{\loc}(\lambda)$ by the submodule generated by the elements 
 \begin{align}\label{finer21}&\big\{ (x_{-\alpha+d_{\alpha}s_{\alpha}\delta})w_\lambda:\alpha\in R^+\big\}
\ \bigcup \big\{(x_{-\alpha+d_{\alpha}(s_{\alpha}-1)})^{m_{\alpha}+1}w_\lambda:\alpha\in R^+, \ m_{\alpha}<\ell\big\}\\
\label{finer22} &\big\{ (x_{-\frac{\alpha}{2}+(s_{\alpha}+\frac{1}{2})\delta})w_\lambda
:\alpha\in R_{\ell}^+\big\}
\bigcup
\big\{(x_{-\frac{\alpha}{2}+(s_{\alpha}-\frac{1}{2})\delta})^{(2m_{\alpha}-\ell)_{+}+1}w_\lambda:\alpha\in R_{\ell}^+, \ m_{\alpha}<\ell\big\}
\end{align} 
\begin{pf}
By Theorem~\ref{genmax2} the module $V(\bxi(\ell,\lambda))$ is the quotient of $W_{\loc}(\lambda)$ by the submodule generated by the elements \eqref{finer21} and \eqref{finer22}. Taking $r\in \{s_{\alpha},s_{\alpha}-1\}$ in \eqref{gleich} and \eqref{gleich2} we see with Theorem~\ref{demq} that the canonical map $W_{\loc}(\lambda)\longrightarrow \D(\ell,\lambda)$ factors through to a map of $\CG$-modules
$$V(\bxi(\ell,\lambda))\longrightarrow \D(\ell,\lambda).$$
To prove that this map is an isomorphism we must prove that the additional defining relations of $\D(\ell,\lambda)$ hold in $V(\bxi(\ell,\lambda))$. We obviously have $x_{\alpha}v_{\bxi(\ell,\lambda)}=0$ for all $\alpha\in \widehat R_{\rm re}(+)$ and by Corollary~\ref{cthird} we get $x_{-\alpha+d_{\alpha}r\delta}v_{\bxi(\ell,\lambda)}=x_{-\frac{1}{2}\alpha+(r+\frac{1}{2})\delta}v_{\bxi(\ell,\lambda)}=0$ if $r\geq s_{\alpha}$. So assume $r<s_{\alpha}$, which gives
\begin{align*}&
k_{-\alpha+d_{\alpha}r\delta}\geq (s_{\alpha}-1-r)\ell+m_{\alpha}\geq \sum_{j\geq r+1}\xi^{\alpha}_{j}&\\&
k_{-\frac{1}{2}\alpha+(r+\frac{1}{2})\delta}\geq \big(2(s_{\alpha}-1-r)\ell+2m_{\alpha}-\ell\big)_+\geq 2\phi(\xi^{\alpha};r)\end{align*}
Hence again by Corollary~\ref{cthird} we obtain a surjective $\CG$--module map $\D(\ell,\lambda)\longrightarrow V(\bxi(\ell,\lambda))$. This completes the proof.
\end{pf}
\end{thm}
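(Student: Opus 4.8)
The plan is to realise both $V(\bxi(\ell,\lambda))$ and $\D(\ell,\lambda)$ as cyclic graded quotients of the single local Weyl module $W_{\loc}(\lambda)$ and then to produce surjective $\CG$--module maps in both directions; since each module is generated by a weight vector of weight $\lambda$ on which $\CN^+\oplus\CH_+$ acts by zero, a pair of surjections between these finite--dimensional modules forces both to be isomorphisms. The first thing I would record is that for the specific tuple $\bxi(\ell,\lambda)$ each partition $\xi^{\alpha}=(\ell^{s_\alpha},m_\alpha)$ is rectangular (when $m_\alpha=\ell$) or a special fat hook (when $m_\alpha<\ell$), so Theorem~\ref{genmax2} applies. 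Substituting $\xi^\alpha_{s_\alpha}=m_\alpha$ and $\xi^\alpha_{s_\alpha-1}=\ell$ into the simplified relations \eqref{finer1} and \eqref{finer2}, and using $\phi(\xi^\alpha;s_\alpha-1)=(m_\alpha-\tfrac12\ell)_+$, so that $2\phi(\xi^\alpha;s_\alpha-1)+1=(2m_\alpha-\ell)_++1$, rewrites them verbatim as \eqref{finer21} and \eqref{finer22}. This is the presentation of $V(\bxi(\ell,\lambda))$ I would take as the starting point, and it is also exactly the ``more precise'' description of $\D(\ell,\lambda)$ asserted in the theorem, so establishing the isomorphism simultaneously yields that statement.

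For the map $V(\bxi(\ell,\lambda))\to\D(\ell,\lambda)$ I would invoke Mathieu's presentation of the Demazure module (Theorem~\ref{demq}) together with the explicit exponents \eqref{gleich} and \eqref{gleich2}, evaluated at the two grades $r=s_\alpha$ and $r=s_\alpha-1$. Because $\lambda(\alpha^\vee)=(s_\alpha-1)\ell+m_\alpha\le s_\alpha\ell$, one computes $k_{-\alpha+d_\alpha s_\alpha\delta}=0$ and $k_{-\alpha+d_\alpha(s_\alpha-1)\delta}=m_\alpha$, while the parallel evaluation of \eqref{gleich2} gives $k_{-\frac{\alpha}{2}+(s_\alpha+\frac12)\delta}=0$ and $k_{-\frac{\alpha}{2}+(s_\alpha-\frac12)\delta}=(2m_\alpha-\ell)_+$. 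Thus every generator listed in \eqref{finer21} and \eqref{finer22} already lies in the annihilator of $v_{\ell,\lambda}$, and the canonical surjection $W_{\loc}(\lambda)\to\D(\ell,\lambda)$ factors through $V(\bxi(\ell,\lambda))$.

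For the reverse map $\D(\ell,\lambda)\to V(\bxi(\ell,\lambda))$ the single tool is Corollary~\ref{cthird}: I would verify that all of Mathieu's relations hold at $v_\bxi$. The relations indexed by $\alpha\in\widehat R_{\rm re}(+)$ are immediate from $\CN^+v_\bxi=0$. For $\alpha=-\beta+d_\beta r\delta$ with $\beta\in R^+$ I would split on $r$: when $r\ge s_\beta$ one has $\sum_{j\ge r+1}\xi^\beta_j=0$, so \eqref{demreltype} kills the generator outright; when $r<s_\beta$ the bookkeeping identity $k_{-\beta+d_\beta r\delta}=\lambda(\beta^\vee)-\ell r=\sum_{j\ge r+1}\xi^\beta_j$ makes the Mathieu exponent coincide exactly with the threshold in Corollary~\ref{cthird}. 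The half--integral long--root relations are treated in the same way through \eqref{demreltype2}, once the inequality $k_{-\frac{\alpha}{2}+(r+\frac12)\delta}\ge 2\phi(\xi^\alpha;r)$ has been checked.

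The step I expect to be the genuine obstacle is precisely this last inequality in the long--root sector: comparing Mathieu's exponent $\max\{0,\,2\lambda(\alpha^\vee)-\ell(2r+1)\}$ with $2\phi(\xi^\alpha;r)$ requires unwinding the piecewise definition of $\phi$ and a short case analysis according to whether $2m_\alpha\ge\ell$, that is, whether the truncation $(\,\cdot\,)_+$ is active, and to which of the three branches of $\phi$ the grade $r$ belongs relative to $s_\alpha-1$. By contrast the short--root relations collapse to the clean identity above and should be routine once Corollary~\ref{cthird} is in hand.
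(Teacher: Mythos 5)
Your proposal is correct and follows the paper's proof essentially step for step: the simplified presentation of $V(\bxi(\ell,\lambda))$ via Theorem~\ref{genmax2}, the surjection $V(\bxi(\ell,\lambda))\to\D(\ell,\lambda)$ obtained from Theorem~\ref{demq} by evaluating the exponents \eqref{gleich}--\eqref{gleich2} at $r\in\{s_\alpha,s_\alpha-1\}$, and the reverse surjection obtained by checking Mathieu's relations against Corollary~\ref{cthird}. The one inequality you flag as the remaining obstacle, $k_{-\frac{\alpha}{2}+(r+\frac{1}{2})\delta}\ge 2\phi(\xi^{\alpha};r)$ for $r<s_\alpha$, does go through---in fact with equality in both branches, since for $r\le s_\alpha-2$ both sides equal $2(s_\alpha-1-r)\ell+2m_\alpha-\ell$ and for $r=s_\alpha-1$ both sides equal $(2m_\alpha-\ell)_+$---which is exactly how the paper dispatches it in a one-line chain of inequalities.
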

As a corollary of the previous theorem we obtain that the level one Demazure modules are isomorphic to local Weyl modules, which was initially proven in \cite{CIK14}.
We can further simplify the presentation of Demazure modules if $\bxi(\ell,\lambda)$ is a tuple of rectangular partitions.
\begin{cor}\label{a2corf}
 Let $\bxi(\ell,\lambda)$ be a tuple of rectangular partitions. Then, $V(\bxi(\ell,\lambda))$ is isomorphic to the quotient of $W_{\loc}(\lambda)$ by the submodule generated by the elements
\begin{align}\label{nureinfacha2}&\big\{ (x_{-\alpha+d_{\alpha}s_{\alpha}\delta})w_\lambda,\  (x_{-\frac{\alpha_n}{2}+(s_{\alpha_n}+\frac{1}{2})\delta})w_\lambda:\alpha\in \Delta\big\}\end{align}
\begin{pf}
Let $\tilde{V}(\bxi(\ell,\lambda))$ be the quotient of $W_{\loc}(\lambda)$ by the submodule generated by \eqref{nureinfacha2} and $\tilde{v}_{\bxi(\ell,\lambda)}$ its cyclic generator. Note that it is enough to show that $$(x_{-\alpha+d_{\alpha}s_{\alpha}\delta})\tilde{v}_{\bxi(\ell,\lambda)}=0 ,\ (\mbox{resp. }(x_{-\frac{\alpha}{2}+(s_{\alpha}+\frac{1}{2})\delta})\tilde{v}_{\bxi(\ell,\lambda)}=0), \mbox{ for all $\alpha\in R^+$ (resp. $\alpha\in R_\ell^+$)}$$
Let $\alpha=\sum_ik_i\alpha_i$. We will proceed by induction on $\htt(\alpha)=\sum_ik_i$ (the height of $\alpha$). The initial step is obvious, so let $\alpha\in R^+-\Delta$ and write $\alpha=\beta+\gamma$ with $\htt(\beta)<\htt(\alpha),\htt(\gamma)<\htt(\alpha)$. Since $d_{\alpha}\alpha^{\vee}=d_{\beta}\beta^{\vee}+d_{\gamma}\gamma^{\vee}$ it follows that $d_{\alpha}s_{\alpha}=d_{\beta}s_{\beta}+d_{\gamma}s_{\gamma}$. The induction hypothesis gives
$$(x_{-\alpha+d_{\alpha}s_{\alpha}\delta})\tilde{v}_{\bxi(\ell,\lambda)}=\big[x_{-\beta+d_{\beta}s_{\beta}\delta},x_{-\gamma+d_{\gamma}s_{\gamma}\delta}\big]\tilde{v}_{\bxi(\ell,\lambda)}=0.$$
For a long root $\alpha=2(\alpha_i+\cdots+\alpha_{n})-\alpha_n$, set $\beta=\alpha_i+\cdots+\alpha_{n-1}$ and $\gamma=\alpha_n$. We obtain
$$(x_{-\frac{\alpha}{2}+(s_{\alpha}+\frac{1}{2})\delta})\tilde{v}_{\bxi(\ell,\lambda)}=\big[x_{-\beta+s_{\beta}\delta},x_{-\frac{\gamma}{2}+(s_{\gamma}+\frac{1}{2})\delta}\big]\tilde{v}_{\bxi(\ell,\lambda)}=0.$$
\end{pf}
\end{cor}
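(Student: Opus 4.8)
The plan is to leverage \thmref{genmax2}. Let $\tilde V(\bxi(\ell,\lambda))$ denote the quotient of $W_{\loc}(\lambda)$ by the submodule generated by the small set \eqref{nureinfacha2}, with $\tilde v_{\bxi(\ell,\lambda)}$ the image of $w_\lambda$. Since the partitions are rectangular we have $m_\alpha=\ell$ for every $\alpha$, so the ``special'' families in \eqref{finer1}--\eqref{finer2} are vacuous and \thmref{genmax2} presents $V(\bxi(\ell,\lambda))$ using only the single relations $x_{-\alpha+d_\alpha s_\alpha\delta}w_\lambda$ $(\alpha\in R^+)$ and $x_{-\frac{\alpha}{2}+(s_\alpha+\frac12)\delta}w_\lambda$ $(\alpha\in R^+_\ell)$. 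As \eqref{nureinfacha2} is exactly the sub-collection indexed by $\Delta$, there is an obvious surjection $\tilde V(\bxi(\ell,\lambda))\twoheadrightarrow V(\bxi(\ell,\lambda))$; because both are cyclic quotients of $W_{\loc}(\lambda)$, it suffices to produce the reverse surjection, i.e. to show that \emph{all} of these single relations already hold on $\tilde v_{\bxi(\ell,\lambda)}$.

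The engine is an induction on $\htt(\alpha)$, the base case $\alpha\in\Delta$ being built into \eqref{nureinfacha2}. For $\alpha\in R^+\setminus\Delta$ I would choose a decomposition $\alpha=\beta+\gamma$ with $\beta,\gamma\in R^+$ of strictly smaller height, which is always possible since every non-simple positive root is a sum of two positive roots. Using the identity $d_\mu\mu^\vee=\mu$ and pairing $d_\alpha\alpha^\vee=d_\beta\beta^\vee+d_\gamma\gamma^\vee$ with $\lambda$, together with $\lambda(\alpha^\vee)=s_\alpha\ell$ (which is precisely where rectangularity enters), one obtains $d_\alpha s_\alpha=d_\beta s_\beta+d_\gamma s_\gamma$. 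Hence the real root $-\alpha+d_\alpha s_\alpha\delta$ is the sum of $-\beta+d_\beta s_\beta\delta$ and $-\gamma+d_\gamma s_\gamma\delta$, and $x_{-\alpha+d_\alpha s_\alpha\delta}$ is a scalar multiple of $[x_{-\beta+d_\beta s_\beta\delta},\,x_{-\gamma+d_\gamma s_\gamma\delta}]$. Since the induction hypothesis annihilates both bracket factors, the commutator kills $\tilde v_{\bxi(\ell,\lambda)}$, giving $x_{-\alpha+d_\alpha s_\alpha\delta}\tilde v_{\bxi(\ell,\lambda)}=0$.

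For the half-integral relations I would treat a long root $\alpha=2(\alpha_i+\cdots+\alpha_n)-\alpha_n$ by splitting $\alpha=2\beta+\gamma$ with $\beta=\alpha_i+\cdots+\alpha_{n-1}$ and $\gamma=\alpha_n$; then $\alpha^\vee=\beta^\vee+\gamma^\vee$ forces $s_\alpha=s_\beta+s_\gamma$, so that
$$-\tfrac{\alpha}{2}+(s_\alpha+\tfrac12)\delta=(-\beta+s_\beta\delta)+\big(-\tfrac{\gamma}{2}+(s_\gamma+\tfrac12)\delta\big).$$
The first summand annihilates $\tilde v_{\bxi(\ell,\lambda)}$ by the previous step, while the second is precisely the base relation of \eqref{nureinfacha2} attached to the unique simple long root $\alpha_n$; thus $[x_{-\beta+s_\beta\delta},\,x_{-\frac{\gamma}{2}+(s_\gamma+\frac12)\delta}]$ again vanishes on the generator. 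I expect the main obstacle to be purely at the level of structure constants: one must verify that each commutator is a \emph{nonzero} multiple of the intended root vector. For a reduced finite system this is automatic whenever $\beta,\gamma,\beta+\gamma$ are roots, but since $\CG$ is realized as a fixed-point subalgebra of $\mathcal{L}(\mathfrak{sl}_{2n+1})$ with the explicit generators fixed earlier, the cleanest route is to check directly in that realization that the brackets of the chosen elements land in the correct root spaces with nonzero coefficients, and that the required height-decreasing decompositions $\alpha=\beta+\gamma$ and $\alpha=2\beta+\gamma$ always exist in type $\tt C_n$. These are finite, root-combinatorial verifications rather than conceptual difficulties.
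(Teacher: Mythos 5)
Your proposal is correct and follows essentially the same route as the paper's own proof: invoke Theorem~\ref{genmax2} to reduce to the single relations indexed by all of $R^+$ (resp.\ $R_\ell^+$), then induct on $\htt(\alpha)$ using the decomposition $\alpha=\beta+\gamma$ with $d_\alpha s_\alpha = d_\beta s_\beta + d_\gamma s_\gamma$, and the splitting $\alpha=2\beta+\gamma$, $\beta=\alpha_i+\cdots+\alpha_{n-1}$, $\gamma=\alpha_n$ for the half-integral relations attached to long roots. Your explicit remarks on where rectangularity enters ($\lambda(\alpha^\vee)=s_\alpha\ell$) and on verifying nonvanishing of the structure constants only make explicit what the paper leaves implicit.
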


\section{The modules \texorpdfstring{$V(\bxi)$}{V} for the special twisted current algebras}\label{section4}
In this section, we consider the special twisted current algebras in the remaining
indecomposable twisted affine Lie algebras. The treatment of these is conceptually identical and technically easier than the considerations for the hyperspecial twisted current algebra. For completness we give a brief summary of their construction, where the details can be found in \cite{Ca05,FK11,K90}.

\subsection{} Let $\overline{\lie g}$ be a simple Lie algebra of type $\tt A_{2n-1}, n\geq 3$, $\tt D_{n+1}, n\geq 3$ or $\tt E_6$. Let $\mathcal{L}(\overline{\lie g}) =\overline{\lie g}\otimes\bc[t, t^{-1}]$ be the loop algebra of $\overline{\lie g}$ with the usual Lie bracket, given by the $\bc[t,t^{-1}]$--bilinear extension of the Lie bracket on $\overline{\lie g}$. The set of roots of $\overline{\lie g}$ is denoted by $R_{\overline{\lie g}}=R^+_{\overline{\lie g}}\cup R^-_{\overline{\lie g}}$ and the weight lattice (resp. the cone of dominant weights) is denoted by $P_{\overline{\lie g}}$ (resp. $P_{\overline{\lie g}}^+$). We denote by $V_{\overline{\lie g}}(\overline{\lambda})$ the irreducible finite--dimensional $\overline{\lie g}$--module with highest weight $\overline{\lambda}\in P_{\overline{\lie g}}^+$. Extend the graph autmorphism $\sigma$ to an order $m$ automorphism of $\mathcal{L}(\overline{\lie g})$ defined by $$\sigma(x\otimes t^i)=\xi^{-i} \sigma(x)\otimes t^i,$$
where $\xi$ is a primitive $m$--th root of unity.
Remark that the Lie  subalgebras $\overline{\lie g}[t]$,\ $\overline{\lie n}^{\pm}[t]$ and $\overline{\lie h}[t]$ are stabilized by $\sigma$.

The special twisted current algebra of an affine Lie algebra of type $\tt A_{2n-1}^{(2)}, n\geq 3$, $\tt D_{n+1}^{(2)}, n\geq 4$, $\tt E_6^{(2)}$, or $\tt D_4^{(3)}$ is isomorphic to the fixed point subalgebra $\CG$ of $\overline{\lie g}[t]$, for $\overline{\lie g}$  of type $\tt A_{2n-1}, n\geq 3$, $\tt D_{n+1}, n\geq 4$, $\tt E_6$ and $m=2$, and  of type $\tt D_4$ and $m=3$, respectively. Recall that both $\overline{\lie g}[t]$ and $\CG$ are naturally $\bz_+$--graded and the homogeneous components of degree zero are $\overline{\lie g}$ and, respectively, $\G$. 
With the notation from  Section~\ref{section1} we have that
$\CN^{\pm}$ (resp. $\CH$) is isomorphic to the fixed point subalgebra of $\overline{\lie n}^{\pm}[t]$ (resp. $\overline{\lie h}[t]$). Recall that we have triangular decomposition $$\CG=\CN^{-}\oplus\CH\oplus \CN^+.$$ 
\subsection{}
Similarly as we realized the hyperspecial twisted current algebra as a subalgebra of $\mathcal{L}(\mathfrak{sl}_{2n+1})$ (see Section~\ref{ev0}) we will realize $\CG$ as a subalgebra of $\mathcal{L}(\overline{\lie g})$. 
We fix a Chevalley basis $\{X^\pm_{\alpha}$, $H_i$ ~:~$i\in I, \alpha\in R^+_{\overline{\lie g}}\}$ for $\overline{\lie g}$. For any $\alpha\in R^+$, there is $\overline{\alpha}\in R^+_{\overline{\lie g}}$ such that $\overline{\alpha}\vert_{\lie h}=\alpha$.
Let $r\in \bz_+$ and $\alpha\in R^+$. Then,
\begin{align*}x_{\pm \alpha+d_{\alpha}r\delta}=\Big(\sum^{m-1}_{i=0}(\xi^i)^{d_{\alpha}r}X^{\pm}_{\sigma^i(\overline{\alpha})}\Big)\otimes t^{d_{\alpha}r}=X^{\pm}_{\alpha,d_{\alpha}r}\otimes t^{d_{\alpha}r},\ h_{\alpha,r\delta}=\Big(\sum^{m-1}_{i=0}(\xi^i)^rH_{\sigma^i(\overline{\alpha})}\Big)\otimes t^r\end{align*}
We remark that $\alpha_i^\vee=h_{i,0}$ for $i\in I$.
\begin{prop}\label{sl2}
For $\alpha\in R^+$, the subalgebra of $\CG$ generated by the elements 
$$\{x_{\pm\alpha+r\delta}~:~r\in\mathbb{Z}_+, \a+r\delta\in \widehat{R} \}$$ is isomorphic to $\lie{sl}_2[t]$.\hfill\qedsymbol
\end{prop}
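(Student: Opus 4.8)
The plan is to write the generators explicitly inside $\overline{\lie g}\otimes\bc[t]$ and read off the $\lie{sl}_2[t]$--relations from the root combinatorics of $\overline{\lie g}$. First I fix $\alpha\in R^+$ together with a preimage $\overline{\alpha}\in R^+_{\overline{\lie g}}$, $\overline{\alpha}|_{\lie h}=\alpha$, and consider its $\sigma$--orbit $\mathcal{O}=\{\overline{\alpha},\sigma\overline{\alpha},\dots,\sigma^{d-1}\overline{\alpha}\}$ of size $d$. The decisive structural input, which I would record at the outset, is that in each of the types $\tt A_{2n-1},\tt D_{n+1},\tt E_6,\tt D_4$ occurring here the roots of $\mathcal{O}$ are pairwise orthogonal and $d=m/d_\alpha$ (so $d=m$ when $\alpha$ is short and $d=1$ when $\alpha$ is long); this is precisely the property that fails in type $\tt A_{2n}^{(2)}$ and makes the present case easier. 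Since $\overline{\lie g}$ is simply--laced, two distinct orthogonal roots $\beta,\gamma$ satisfy $(\beta\pm\gamma,\beta\pm\gamma)=2(\beta,\beta)$, so neither $\beta+\gamma$ nor $\beta-\gamma$ is a root; hence the vectors $X^\pm_{\sigma^i\overline{\alpha}}$ span $d$ mutually commuting copies of $\lie{sl}_2$ in $\overline{\lie g}$,
\begin{gather*}
[X^+_{\sigma^i\overline{\alpha}},X^-_{\sigma^j\overline{\alpha}}]=\delta_{ij}H_{\sigma^i\overline{\alpha}},\quad [X^\pm_{\sigma^i\overline{\alpha}},X^\pm_{\sigma^j\overline{\alpha}}]=0,\quad [H_{\sigma^i\overline{\alpha}},X^\pm_{\sigma^j\overline{\alpha}}]=\pm 2\delta_{ij}X^\pm_{\sigma^j\overline{\alpha}},
\end{gather*}
for $0\le i,j\le d-1$, the last equality using $(\sigma^j\overline{\alpha},(\sigma^i\overline{\alpha})^\vee)=2\delta_{ij}$.

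Next I would carry out the bracket computation. Writing $X^\pm_{\alpha,s}=\sum_{i=0}^{m-1}(\xi^i)^s X^\pm_{\sigma^i\overline{\alpha}}$ and grouping the terms by the residue of $i$ modulo $d$, the geometric sum $\sum_{k=0}^{d_\alpha-1}(\xi^{d})^{sk}$ equals $d_\alpha$ when $d_\alpha\mid s$ and $0$ otherwise; as the relevant elements occur only in grades $s=d_\alpha r$, this gives $X^\pm_{\alpha,s}=d_\alpha\sum_{r'=0}^{d-1}(\xi^{r'})^s X^\pm_{\sigma^{r'}\overline{\alpha}}$, and similarly for the Cartan part appearing in $h_{\alpha,s\delta}$. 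Substituting these into the three relations above, every cross term ($i\neq j$) drops out by orthogonality and only the diagonal survives, so that for valid grades $s,s'\in d_\alpha\bz_+$ one obtains
\begin{gather*}
[x_{\alpha+s\delta},x_{-\alpha+s'\delta}]=d_\alpha\,h_{\alpha,(s+s')\delta},\quad [h_{\alpha,s\delta},x_{\pm\alpha+s'\delta}]=\pm 2d_\alpha\,x_{\pm\alpha+(s+s')\delta},
\end{gather*}
together with $[x_{\pm\alpha+s\delta},x_{\pm\alpha+s'\delta}]=0$ and the vanishing of all Cartan brackets, since $\CH$ is abelian. After the harmless rescaling $e_k=x_{\alpha+d_\alpha k\delta}$, $f_k=d_\alpha^{-2}x_{-\alpha+d_\alpha k\delta}$, $h_k=d_\alpha^{-1}h_{\alpha,d_\alpha k\delta}$, these are exactly the defining relations $[h_k,e_l]=2e_{k+l}$, $[h_k,f_l]=-2f_{k+l}$, $[e_k,f_l]=h_{k+l}$ of $\lie{sl}_2[t]$.

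Finally I would assemble the isomorphism. The assignment $e\otimes t^k\mapsto e_k$, $f\otimes t^k\mapsto f_k$, $h\otimes t^k\mapsto h_k$ respects the relations just verified and is additive in the $t$--degree, hence extends to a Lie algebra homomorphism $\varphi\colon\lie{sl}_2[t]\to\CG$. Its image is the subalgebra generated by the $x_{\pm\alpha+r\delta}$: the $e_k,f_k$ are among these generators, and each $h_k$ is proportional to $[x_{\alpha+d_\alpha k\delta},x_{-\alpha}]$, hence lies in the generated subalgebra. For injectivity I would use the grading of $\overline{\lie g}\otimes\bc[t]$: the elements $e_k,f_k,h_k$ lie in the homogeneous piece $\overline{\lie g}\otimes t^{d_\alpha k}$, and inside it they are nonzero and lie in distinct weight spaces of $\overline{\lie g}$ (each $X^\pm_{\alpha,s}$ is a nontrivial combination of root vectors attached to distinct roots, hence nonzero), so $\varphi$ carries the standard monomial basis of $\lie{sl}_2[t]$ to linearly independent elements. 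Thus $\varphi$ is an isomorphism onto the subalgebra in question. The only genuinely nontrivial step is the first one, the orthogonality of the orbit $\mathcal{O}$, which I would either read off from the explicit realizations in \cite{Ca05,FK11,K90} or extract from the root data type by type; once it is in hand, everything else is dictated by the simply--laced combinatorics.
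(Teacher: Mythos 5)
Your proof is correct. Note that the paper itself offers no argument for this proposition: it is stated with a qed symbol immediately after, treated as a standard consequence of the explicit realization of the elements $x_{\pm\alpha+d_\alpha r\delta}$ and $h_{\alpha,r\delta}$ given just above it (with references to \cite{Ca05,FK11,K90}). Your write-up supplies exactly the details being taken for granted there -- the orthogonality of a $\sigma$--orbit of roots in the simply--laced types other than $\tt A_{2n}$ (with orbit size $m/d_\alpha$), the geometric-sum reduction of $X^{\pm}_{\alpha,s}$ to a sum over one orbit, the verification that all cross terms in the brackets vanish, the rescaling to the standard $\lie{sl}_2[t]$ relations, and injectivity via the $t$--grading and weight-space decomposition -- so it matches the intended argument rather than taking a different route.
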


\subsection{}
For a dominant integral $\lie g$ weight $\lambda$ the local Weyl module is generated by an element $w_{\lambda}$, with defining relations
$$\CN^{+}w_{\lambda}=0,\ \CH_+w_{\lambda}=0,\ h.w_{\lambda}=\lambda(h)w_{\lambda}, \mbox{ for $h\in \lie h$},\ x_{-\alpha}^{\lambda(\alpha^{\vee})}w_{\lambda}=0.$$
For a more general definition of local Weyl modules for equivariant map algebras we refer to \cite{FKKS11}.
For a $\lambda$-compatible tuple of partitions we define similarly a $\CG$--module $V(\bxi)$ as the graded quotient of $W_{\loc}(\lambda)$ by the submodule generated by the graded elements:
  \begin{equation*}
 \Big\{(x_{\alpha+d_{\alpha}\delta})^s(x_{-\alpha})^{s+r}w_\lambda : \alpha\in R^+, \ s,r\in\bn,  \   s+r\ge 1+ rk+\sum_{j\ge k+1}\xi^{\alpha}_j,\ \ {\rm{for\ some}}\ k\in\bn \Big\}.
\end{equation*}

The three presentations are clear from Proposition~\ref{sl2}, Lemma~\ref{garsl_2} and Proposition~\ref{max}, since all calculations are valid for $\mathfrak{sl}_2[t]$.

\subsection{}
Again we simplify the presentation of $V(\bxi)$ for rectangular or special fat hook partitions and find a connection to $\CG$--stable Demazure modules for very special types of partitions. For any $\alpha\in R^+$ we write again $\lambda(\alpha^{\vee})=(s_{\alpha}-1)\ell+m_{\alpha},\ 0<m_{\alpha}\leq \ell.$
Similarly, we denote by $\bxi(\ell,\lambda)=(\xi^{\alpha})_{\alpha\in R^+}$ be the $|R^+|$-tuple of partitions given by:
$\xi^{\alpha}$ is the empty partition if $\lambda(\alpha^{\vee})=0$ and otherwise, is the partition $(\ell^{s_{\alpha}-1},m_{\alpha})$. We collect our results in the next two theorems.

\begin{thm}
\label{genmax3} Let $\bxi=(\xi^{\alpha})_{\alpha\in R^+}$ be a $\lambda$--compatible $|R^+|$--tuple of partitions. Assume  that  $\xi^\alpha $ is  either rectangular or special for $\alpha\in R^+$. Then, $V(\bxi)$ is isomorphic to the quotient of $W_{\loc}(\lambda)$ by the submodule generated by the elements
\begin{align}\label{finer3}&\big\{ (x_{-\alpha+d_{\alpha}s_{\alpha}\delta})w_\lambda:\alpha\in R^+\big\}
\ \bigcup \big\{(x_{-\alpha+d_{\alpha}(s_{\alpha}-1)\delta})^{\xi^\alpha_{s_\alpha}+1}w_\lambda:\alpha\in R^+, \ \ \xi^{\alpha} \ {\rm{{special}}}\big\}
\end{align} \vskip 6pt

\begin{pf}
The proof proceeds similarly to the first part of the proof of Theorem~\ref{genmax2}.
\end{pf}
\end{thm}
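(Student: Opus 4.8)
The plan is to follow the first half of the proof of Theorem~\ref{genmax2}, namely the part relying solely on the embedded copies of $\mathfrak{sl}_2[t]$. The crucial simplification in the special case is that the only rank one current subalgebra occurring in $\CG$ is of type $\mathfrak{sl}_2[t]$ (Proposition~\ref{sl2}); there is no rank one subalgebra of type $\tt A_2^{(2)}$, so the long--root relations \eqref{thirdlong} and their simplified form \eqref{finer2} do not appear, and the entire multi--case estimate carried out in the hyperspecial proof is simply absent here.

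Concretely, I would let $U$ denote the submodule of $W_{\loc}(\lambda)$ generated by the elements \eqref{finer3} and write $\tilde V(\bxi)$ for the corresponding quotient, with $\tilde v_\bxi$ the image of $w_\lambda$. Since the three equivalent presentations of $V(\bxi)$ carry over verbatim to the special setting (they follow from Proposition~\ref{sl2}, Lemma~\ref{garsl_2} and Proposition~\ref{max}), the first statement \eqref{demreltype} of Corollary~\ref{cthird} remains valid: for $r\ge 1+\sum_{j\ge k+1}\xi^\alpha_j$ one has $(x_{-\alpha+d_\alpha k\delta})^{r}v_\bxi=0$. Specializing to $(k,r)=(s_\alpha,1)$ and to $(k,r)=(s_\alpha-1,\xi^\alpha_{s_\alpha}+1)$ shows precisely that the generators listed in \eqref{finer3} annihilate $v_\bxi$, and hence yields a surjection $\tilde V(\bxi)\twoheadrightarrow V(\bxi)$.

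For the reverse surjection I would verify that the defining relations of $V(\bxi)$ --- the relations of the form \eqref{firstshort} --- already hold in $\tilde V(\bxi)$. Fixing $\alpha\in R^+$, I would consider the subalgebra $\mathfrak{sl}_2(\alpha)\subset\CG$ generated by $\{x_{\pm\alpha+p\delta}:p\in\bz_+,\ \alpha+p\delta\in\widehat R\}$, which is isomorphic to $\mathfrak{sl}_2[t]$ by Proposition~\ref{sl2}. The elements of \eqref{finer3} attached to $\alpha$ are exactly the simplified defining relations of the $\mathfrak{sl}_2[t]$--module $V_{\mathfrak{sl}_2}(\bxi_\alpha)$ associated to the single partition $\widetilde{\xi}^{\alpha}$, as given in \cite[Theorem 3.1]{CV13}; here one uses that $\xi^\alpha$ is rectangular or special to match the exponents $s_\alpha$ and $\xi^\alpha_{s_\alpha}+1$ with the data of $\widetilde{\xi}^{\alpha}$. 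This produces a surjective $\mathfrak{sl}_2[t]$--map $V_{\mathfrak{sl}_2}(\bxi_\alpha)\twoheadrightarrow\bu(\mathfrak{sl}_2(\alpha))\tilde v_\bxi$, so all relations of $V_{\mathfrak{sl}_2}(\bxi_\alpha)$ --- in particular the relations \eqref{firstshort} for this $\alpha$ --- hold in $\tilde V(\bxi)$. Ranging over $\alpha\in R^+$ gives a surjection $V(\bxi)\twoheadrightarrow\tilde V(\bxi)$, and combining the two surjections yields the asserted isomorphism. The only delicate point is this translation into the $\mathfrak{sl}_2[t]$ presentation of \cite{CV13}; but, in contrast to the hyperspecial case, there is no analogue of Lemma~\ref{lemmas} or Lemma~\ref{rearrangea_2} to invoke, which is exactly why the argument here is strictly easier than for Theorem~\ref{genmax2}.
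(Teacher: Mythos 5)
Your proposal is correct and takes essentially the same route as the paper: the paper's proof is literally ``proceed as in the first part of the proof of Theorem~\ref{genmax2},'' which is exactly the two-surjection argument you spell out --- Corollary~\ref{cthird} (first statement, valid here since the three presentations carry over via Proposition~\ref{sl2}, Lemma~\ref{garsl_2} and Proposition~\ref{max}) gives $\tilde V(\bxi)\twoheadrightarrow V(\bxi)$, and the reduction to $\lie{sl}_2(\alpha)\cong\lie{sl}_2[t]$ together with \cite[Theorem 3.1]{CV13} gives the reverse surjection. You also correctly identify the reason the bulk of the hyperspecial proof is unnecessary here: no $\tt A_2^{(2)}$-type rank one subalgebra occurs, so the relations \eqref{thirdlong}/\eqref{finer2} and the accompanying case analysis are absent.
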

\subsection{}
As before, for any pair $(\ell,\lambda)$ there exists an indecomposable $\CG$--stable Demazure module $\D(\ell,\lambda)$, which can be presented as a cyclic module that has an explicit description
of the annihilator of the generating element. For more details we refer to \cite[Lemma 26]{M88} and \cite[Section 4]{FK11}.

\begin{thm}\label{dem3}
The module $V(\bxi(\ell,\lambda))$ is isomorphic to the Demazure module $\D(\ell,\lambda)$ as a $\CG$--module.
More precisely, the $\CG$--stable Demazure module is the quotient of $W_{\loc}(\lambda)$ by the submodule generated by the elements 
 \begin{align}\label{finer23}&\big\{ (x_{-\alpha+d_{\alpha}s_{\alpha}\delta})w_\lambda:\alpha\in R^+\big\}
\ \bigcup \big\{(x_{-\alpha+d_{\alpha}(s_{\alpha}-1)})^{m_{\alpha}+1}w_\lambda:\alpha\in R^+, \ m_{\alpha}<\ell\big\}
\end{align} 
\begin{pf}
Using the simplified presentation in Theorem~\ref{genmax3} the proof proceeds similarly as the proof of Theorem~\ref{dem2}.
\end{pf}
\end{thm}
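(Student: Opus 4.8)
The plan is to mirror the proof of Theorem~\ref{dem2}, which is in fact considerably simpler in the special case: by Proposition~\ref{sl2} every relevant rank one subalgebra of $\CG$ is isomorphic to $\mathfrak{sl}_2[t]$ and there are no half--integer root vectors, so the long--root relations \eqref{thirdlong} and the auxiliary function $\phi$ never enter. The first step is to apply Theorem~\ref{genmax3} to the tuple $\bxi(\ell,\lambda)$, each of whose partitions is by construction rectangular (when $m_\alpha=\ell$) or a special fat hook (when $m_\alpha<\ell$); this identifies $V(\bxi(\ell,\lambda))$ with the quotient of $W_{\loc}(\lambda)$ by the submodule generated by the elements \eqref{finer23}. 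The whole statement then reduces to matching this presentation against the presentation of $\D(\ell,\lambda)$ furnished by Mathieu's theorem (the special--case analogue of Theorem~\ref{demq}, for which I would cite \cite[Lemme 26]{M88} and \cite[Section 4]{FK11}), whose nontrivial relations are $x_{-\alpha+d_\alpha r\delta}^{\,k_{-\alpha+d_\alpha r\delta}+1}v_{\ell,\lambda}=0$ with $k_{-\alpha+d_\alpha r\delta}=\max\{0,\lambda(\alpha^\vee)-\ell r\}$.

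To construct the surjection $V(\bxi(\ell,\lambda))\to\D(\ell,\lambda)$ I would observe that the generators listed in \eqref{finer23} are precisely the Mathieu relations evaluated at $r\in\{s_\alpha-1,s_\alpha\}$: at $r=s_\alpha$ one has $k_{-\alpha+d_\alpha s_\alpha\delta}=0$, giving $x_{-\alpha+d_\alpha s_\alpha\delta}v_{\ell,\lambda}=0$, and at $r=s_\alpha-1$ one has $k_{-\alpha+d_\alpha(s_\alpha-1)\delta}=m_\alpha$, giving $(x_{-\alpha+d_\alpha(s_\alpha-1)\delta})^{m_\alpha+1}v_{\ell,\lambda}=0$. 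Hence the canonical surjection $W_{\loc}(\lambda)\to\D(\ell,\lambda)$ annihilates the submodule generated by \eqref{finer23} and factors through $V(\bxi(\ell,\lambda))$.

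For the reverse surjection $\D(\ell,\lambda)\to V(\bxi(\ell,\lambda))$ I would verify that every defining relation of $\D(\ell,\lambda)$ holds at $v_{\bxi(\ell,\lambda)}$. The relations indexed by $\alpha\in\widehat R_{\rm re}(+)$ carry exponent $k_\alpha=0$ and hold because $\CN^+v_{\bxi(\ell,\lambda)}=0$, while $\CH_+v_{\bxi(\ell,\lambda)}=0$ and the $\H$--action are part of \eqref{vxi}; thus only the relations $x_{-\alpha+d_\alpha r\delta}^{\,k_{-\alpha+d_\alpha r\delta}+1}v=0$ remain. These follow from the special--case analogue of Corollary~\ref{cthird}, which is available here because that corollary rests only on Proposition~\ref{sl2}, Lemma~\ref{garsl_2} and Proposition~\ref{max}: it yields $(x_{-\alpha+d_\alpha r\delta})^{p}v_{\bxi(\ell,\lambda)}=0$ whenever $p\ge 1+\sum_{j\ge r+1}\xi^\alpha_j$, and since the partition $\xi^\alpha$ attached to $\bxi(\ell,\lambda)$ satisfies $\sum_{j\ge r+1}\xi^\alpha_j=\max\{0,\lambda(\alpha^\vee)-\ell r\}=k_{-\alpha+d_\alpha r\delta}$ for every $r$, the required relations hold with $p=k_{-\alpha+d_\alpha r\delta}+1$ (for $r\ge s_\alpha$ this just says $x_{-\alpha+d_\alpha r\delta}v_{\bxi(\ell,\lambda)}=0$). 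The two surjections are mutually inverse, which proves the asserted isomorphism together with the explicit presentation.

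The only point requiring genuine care -- and the reason the argument is not literally identical to that of Theorem~\ref{dem2} -- is the bookkeeping forced by the different realization of $\CG$ as the fixed--point subalgebra of $\overline{\lie g}[t]$: one must confirm that the $\delta$--grade carried by the root vector $x_{-\alpha+d_\alpha r\delta}$ produced by the averaging construction coincides with the integer $r$ entering the exponent $k_{-\alpha+d_\alpha r\delta}$, so that the comparison $\sum_{j\ge r+1}\xi^\alpha_j=k_{-\alpha+d_\alpha r\delta}$ is exact. Once this normalization is fixed, no estimate beyond the $\mathfrak{sl}_2[t]$ computations of \cite{CV13} is needed, in contrast to the hyperspecial case where the appearance of the rank one algebra $\tt A_2^{(2)}$ made Lemma~\ref{lemmas}, Lemma~\ref{rearrangea_2} and Proposition~\ref{maxa_2} indispensable; this is precisely why the special case is technically easier.
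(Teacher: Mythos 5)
Your proposal is correct and follows exactly the route the paper intends: the paper's own proof of Theorem~\ref{dem3} is just the one-line remark that one applies Theorem~\ref{genmax3} and repeats the argument of Theorem~\ref{dem2}, which is precisely what you carry out --- matching \eqref{finer23} against Mathieu's relations at $r\in\{s_\alpha-1,s_\alpha\}$ for one surjection, and using the $\mathfrak{sl}_2[t]$-analogue of Corollary~\ref{cthird} together with the identity $\sum_{j\ge r+1}\xi^\alpha_j=\max\{0,\lambda(\alpha^\vee)-\ell r\}$ for the other. Your closing observation that only the untwisted rank-one machinery of \cite{CV13} is needed (no $\phi$, no half-integer root vectors) is exactly why the paper treats this case as "conceptually identical and technically easier."
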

Again one can see with the previous theorem that the level one Demazure modules are
isomorphic to local Weyl modules, which was initially proven in \cite{FK11} for the special twisted current algebras.
Similarly to Corollary~\ref{a2corf} we can further simplify the presentation of Demazure modules if $\bxi(\ell,\lambda)$ is a tuple of rectangular partitions.
\begin{cor}\label{restcorf}
 Let $\bxi(\ell,\lambda)$ be a tuple of rectangular partitions. Then, $V(\bxi(\ell,\lambda))$ is isomorphic to the quotient of $W_{\loc}(\lambda)$ by the submodule generated by the elements
\begin{align}\label{nureinfachrest}&\big\{ (x_{-\alpha+d_{\alpha}s_{\alpha}\delta})w_\lambda:\alpha\in \Delta\big\}\end{align}\hfill\qedsymbol
\end{cor}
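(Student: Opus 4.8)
The plan is to imitate the argument of Corollary~\ref{a2corf}, which in fact simplifies here since the special twisted current algebras contain no half--integer (long--root) imaginary shifts. Let $\tilde V(\bxi(\ell,\lambda))$ denote the quotient of $W_{\loc}(\lambda)$ by the submodule generated by the elements \eqref{nureinfachrest}, and write $\tilde v_{\bxi(\ell,\lambda)}$ for its cyclic generator. By Theorem~\ref{genmax3}, for a tuple of rectangular partitions $V(\bxi(\ell,\lambda))$ is the quotient of $W_{\loc}(\lambda)$ by the submodule generated by $\big\{(x_{-\alpha+d_\alpha s_\alpha\delta})w_\lambda:\alpha\in R^+\big\}$; as \eqref{nureinfachrest} is the restriction of this generating set to the simple roots $\alpha\in\Delta$, there is a canonical surjection $\tilde V(\bxi(\ell,\lambda))\twoheadrightarrow V(\bxi(\ell,\lambda))$. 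Thus it suffices to show that $(x_{-\alpha+d_\alpha s_\alpha\delta})\tilde v_{\bxi(\ell,\lambda)}=0$ for every $\alpha\in R^+$, since this forces the two defining submodules to coincide and the surjection to be an isomorphism.

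First I would prove this by induction on the height $\htt(\alpha)$. The base case $\alpha\in\Delta$ is exactly the relation \eqref{nureinfachrest}. For the inductive step, write $\alpha=\beta+\gamma$ with $\beta,\gamma\in R^+$ of strictly smaller height. Since the partitions are rectangular we have $s_\alpha=\lambda(\alpha^\vee)/\ell$, and applying $\lambda$ to the coroot identity $d_\alpha\alpha^\vee=d_\beta\beta^\vee+d_\gamma\gamma^\vee$ (which is just $\alpha=\beta+\gamma$ transported through the invariant form) yields the additivity $d_\alpha s_\alpha=d_\beta s_\beta+d_\gamma s_\gamma$ of the relevant degrees. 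It then remains to observe that, in the realization of $\CG$ as the fixed--point subalgebra of $\overline{\lie g}[t]$ described in Section~\ref{section4},
$$\big[x_{-\beta+d_\beta s_\beta\delta},\,x_{-\gamma+d_\gamma s_\gamma\delta}\big]=c\,x_{-\alpha+d_\alpha s_\alpha\delta}$$
for some nonzero scalar $c$; the inductive hypothesis annihilates both bracket factors on $\tilde v_{\bxi(\ell,\lambda)}$, and the claim follows.

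The step I expect to require the most care is the verification of this last bracket relation. Writing $x_{-\alpha+d_\alpha r\delta}=X^-_{\alpha,d_\alpha r}\otimes t^{d_\alpha r}$ as in Section~\ref{section4}, the bracket automatically lands in the homogeneous component $t^{d_\beta s_\beta+d_\gamma s_\gamma}=t^{d_\alpha s_\alpha}$ by the additivity established above, and its $\overline{\lie g}$--component is a nonzero multiple of $X^-_{\alpha,d_\alpha s_\alpha}$ precisely when the decomposition $\alpha=\beta+\gamma$ is chosen so that the corresponding $\sigma$--twisted combinations of Chevalley generators bracket nondegenerately. The only genuine subtlety is therefore to select, for each non--simple root, a decomposition adapted to the long/short dichotomy of $R$ and to the diagram automorphism $\sigma$ --- in the spirit of the long--root choice $\alpha=(\alpha_i+\cdots+\alpha_{n-1})+\alpha_n$ made in Corollary~\ref{a2corf}. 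As no half--integer roots intervene in the special case, keeping track of the structure constants (including the roots of unity introduced by $\sigma$) is a routine computation inside $\overline{\lie g}[t]$, and I do not expect it to present a serious obstacle.
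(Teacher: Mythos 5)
Your argument is the paper's own: Corollary~\ref{restcorf} carries only a qed--symbol precisely because it is meant to follow by the height induction in the proof of Corollary~\ref{a2corf} with the half--integer root vectors removed, and that is exactly what you reproduce (reduction via Theorem~\ref{genmax2}/Theorem~\ref{genmax3}, base case \eqref{nureinfachrest}, coroot additivity, bracket step). One side remark: the step you single out as delicate at the end is in fact automatic. Whenever $\eta_1$, $\eta_2$ and $\eta_1+\eta_2$ are real roots of $\widehat{\lie g}$, unbroken root strings through real roots give $[\widehat{\lie g}_{\eta_1},\widehat{\lie g}_{\eta_2}]=\widehat{\lie g}_{\eta_1+\eta_2}$, so \emph{any} decomposition $\alpha=\beta+\gamma$ into positive roots works once the $\delta$--degrees add up; no choice adapted to $\sigma$ and no structure constants are needed.

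The one genuine gap sits in the line you assert without comment: ``since the partitions are rectangular we have $s_\alpha=\lambda(\alpha^\vee)/\ell$.'' For the special algebras $\xi^{\alpha}=(\ell^{s_\alpha-1},m_\alpha)$, so this identity is equivalent to $m_\alpha=\ell$; but $\xi^\alpha$ is \emph{also} rectangular when $s_\alpha=1$ and $0<m_\alpha<\ell$ (it is then the one--row partition $(m_\alpha)$), and there $s_\alpha\neq\lambda(\alpha^\vee)/\ell$. This is not a removable technicality: in that regime the additivity $d_\alpha s_\alpha=d_\beta s_\beta+d_\gamma s_\gamma$ can fail for every decomposition of $\alpha$, and the corollary itself fails. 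For instance take $\widehat{\lie g}$ of type $D_4^{(3)}$ (so $\lie g$ of type $G_2$), $\ell=2$, and $\lambda$ the fundamental weight at the short simple root; then $\lambda(\alpha^\vee)\in\{0,1,2\}$ for all $\alpha\in R^+$, so every $\xi^\alpha$ is rectangular. Both elements of \eqref{nureinfachrest} already vanish in $W_{\loc}(\lambda)$: the one at the long simple root is a Weyl--module relation because $\lambda$ vanishes on its coroot, and the one at the short simple root is the standard local Weyl module relation $(x^-\otimes t^{\lambda(\alpha^\vee)})w_\lambda=0$ inside the copy of $\mathfrak{sl}_2[t]$ from Proposition~\ref{sl2}. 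Hence the quotient in \eqref{nureinfachrest} is $W_{\loc}(\lambda)\cong \D(1,\lambda)$, which has dimension $8$ (e.g.\ by Theorem~\ref{mainthmsection6} with $\ell=1$, it is built from $\D_{\overline{\lie g}}(1,\overline{\lambda})=V_{\overline{\lie g}}(\overline{\lambda})$), whereas $V(\bxi(2,\lambda))\cong\D(2,\lambda)\cong\ev_0^*V(\lambda)$ has dimension $7$: the relation $x_{-\theta_s+\delta}w_\lambda=0$ at the highest short root $\theta_s$ appears in \eqref{finer3} but cannot be reached by the induction. Note this phenomenon cannot occur in the hyperspecial setting of Corollary~\ref{a2corf}, where $\xi^\alpha=(\ell^{s_\alpha},m_\alpha)$ and rectangularity does force $m_\alpha=\ell$. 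So your proof --- and the statement --- are correct exactly under the stronger reading that every nonzero part of every $\xi^\alpha$ equals $\ell$, i.e.\ $\ell\mid\lambda(\alpha^\vee)$ for all $\alpha\in R^+$, equivalently $\lambda\in\ell P^+$; this is the situation in the paper's application ($\lambda=\ell\omega_i$, Proposition~\ref{prop23}), and you should state that hypothesis explicitly, after which your induction is complete and coincides with the intended proof.
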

\section{Tensor product decomposition of twisted Demazure modules}\label{section5}

\subsection{} In this section, we give a tensor product decomposition of twisted Demazure modules.  The main result of this section is the following. 
\begin{thm}\label{tensordec}
Let $\lambda\in P^+$ and $p, \ell\in \bn$ and write $$\lambda=\ell \Big(\sum\limits_{i=1}^{p}\lambda_i\Big)+\lambda_0,\ \text{for some } \lambda_i\in P^+,\ 0\le i\le p.$$
If $\widehat{\lie g}$ is of type $\tt E^{(2)}_6$ we assume that $\ell\Lambda_0-w_0\lambda_0\in \widehat{P}^+$.
Then we have an isomorphism of $\lie g$--modules
$$D(\ell,\lambda)\cong_{\lie g} D(\ell,\ell \lambda_1)\otimes \cdots \otimes D(\ell, \ell \lambda_p)\otimes D(\ell, \lambda_0).$$
\end{thm}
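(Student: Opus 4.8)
The plan is to prove the tensor product decomposition as a $\lie g$-module isomorphism by combining character equalities (equivalently, equalities of $\lie g$-module structures) derived from the fusion product machinery together with the identification $V(\bxi(\ell,\lambda))\cong\D(\ell,\lambda)$ from Theorem~\ref{dem2}. The essential observation is that a fusion product, by construction via the $\bz_+$-grading on $\CG$, has the same $\lie g$-character as the ordinary tensor product of the associated (graded) $\lie g$-modules. So if one can show that $V(\bxi(\ell,\lambda))$ admits a fusion product decomposition whose factors are $\D(\ell,\ell\lambda_i)$ and $V(\bxi(\ell,\lambda_0))$, the $\lie g$-module statement follows immediately by taking $\lie g$-characters, since fusion is a filtered deformation of the tensor product and the associated graded object has the tensor product as its underlying $\lie g$-module. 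This is exactly the content foreshadowed by the second displayed theorem in the introduction (the fusion decomposition $V(\bxi(\ell,\lambda))\cong \D_{\overline{\lie g}}(\ell,\ell\lambda_1)*\cdots*V(\bxi(\ell,\lambda_0))$).

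First I would reduce the claim to the compatibility of the partition data under the decomposition $\lambda=\ell(\sum_i\lambda_i)+\lambda_0$. Writing $\lambda(\alpha^\vee)=(s_\alpha-1)\ell+m_\alpha$ for each $\alpha\in R^+$ as in Section~\ref{section3}, the tuple $\bxi(\ell,\lambda)$ is the fat hook $(\ell^{s_\alpha},m_\alpha)$, and the summands $\lambda_i$ (for $1\le i\le p$) contribute rectangular partitions $\xi^\alpha=(\ell^{\lambda_i(\alpha^\vee)})$ while $\lambda_0$ contributes the residual partition. The key combinatorial fact is that, at each root $\alpha$, concatenating these partitions reproduces $\bxi(\ell,\lambda)$, because the number of parts equal to $\ell$ adds correctly. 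With this bookkeeping in place, the simplified presentation of Theorem~\ref{genmax2}---which for rectangular partitions reduces (by Corollary~\ref{a2corf}) to the single relations $x_{-\alpha+d_\alpha s_\alpha\delta}w_\lambda=0$ on simple roots---makes each factor recognizable as the corresponding Demazure module.

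The main technical step is to establish the fusion product decomposition itself at the level of $\CG$-modules, which requires the machinery of Section~\ref{section6}; for the purposes of the present $\lie g$-module statement I would instead argue directly with characters, avoiding the full fusion construction. Concretely, I would show both a dimension/character upper bound and a lower bound. For the upper bound, one uses the defining relations of $\D(\ell,\lambda)$ (Theorem~\ref{demq}) to produce a surjection from a suitable tensor product onto $\D(\ell,\lambda)$, exploiting that the generators $x_{-\alpha+d_\alpha r\delta}$ act on a tensor product via the Leibniz rule and that the exponents $k_\alpha$ in \eqref{gleich}--\eqref{gleich2} are subadditive under the decomposition of $\lambda$. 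For the lower bound, the known classical decomposition of twisted Demazure modules (from the results quoted around \cite{FoL06}, \cite{CM06}, \cite{CM07}) gives an explicit $\lie g$-character for each factor, and one checks these multiply to the character of $\D(\ell,\lambda)$.

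\textbf{The main obstacle} will be controlling the interaction between the rank-one twisted subalgebra of type $\tt A_2^{(2)}$ (Proposition~\ref{sp2}(ii)) and the tensor product structure for the hyperspecial case, since the long-root relations \eqref{thirdlong} involving the half-integer-graded elements $x_{\frac{\alpha}{2}+\frac{\delta}{2}}$ and the function $\phi(\xi^\alpha;k)$ do not decompose as transparently as the $\mathfrak{sl}_2[t]$-relations do. In particular, verifying that the $\phi$-exponents behave additively under concatenation of partitions at long roots (so that the long-root Demazure relations \eqref{gleich2} are respected by the tensor factorization) is where the genuine twisted phenomenon enters and where the special type $\tt E_6^{(2)}$ hypothesis $\ell\Lambda_0-w_0\lambda_0\in\widehat P^+$ becomes necessary, presumably to guarantee that the residual weight $\lambda_0$ still indexes a genuine $\widehat{\lie g}$-dominant level-$\ell$ Demazure datum. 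I expect the cleanest route is to deduce this $\lie g$-module statement as a corollary of the stronger $\CG$-module fusion decomposition (Theorem~\ref{mainthmsection6}), so that the present theorem becomes a character specialization rather than a free-standing argument.
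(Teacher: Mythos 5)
Your proposed route is circular. In the paper the logical order is the reverse of what you suggest: Theorem~\ref{tensordec} is proved \emph{first}, by a Demazure-operator argument, and is then an essential \emph{input} to the fusion decomposition Theorem~\ref{mainthmsection6}. In that later proof the explicit construction only yields a surjection $\D(\ell,\lambda)\twoheadrightarrow \D_{\overline{\lie g}}(\ell,\ell\overline{\lambda}_1)*\cdots*\D_{\overline{\lie g}}(\ell,\ell\overline{\lambda}_p)*\D(\ell,\lambda_0)$, and it is precisely the dimension equality supplied by Theorem~\ref{tensordec} (together with Lemma~\ref{lemwe}) that upgrades this surjection to an isomorphism. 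So deducing the tensor decomposition as a ``character specialization'' of Theorem~\ref{mainthmsection6} assumes the statement to be proved. Your fallback ``upper bound plus lower bound'' sketch does not repair this: one inequality does come for free from the fusion machinery (the fusion product is a \emph{quotient} of $\D(\ell,\lambda)$, note that your proposed surjection from the tensor product onto $\D(\ell,\lambda)$ goes in the wrong direction), but the other inequality --- ``the known classical decompositions multiply to the character of $\D(\ell,\lambda)$'' --- is exactly the assertion of the theorem, and you give no mechanism to verify it. The classical decompositions in \cite{FoL06}, \cite{CM06}, \cite{CM07} are only available for special weights such as multiples of fundamental weights, not for an arbitrary $\lambda$ with an arbitrary splitting $\lambda=\ell\big(\sum_i\lambda_i\big)+\lambda_0$.

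The missing idea is the paper's actual mechanism: the Demazure character formula and Demazure operator calculus. One writes $-\lambda_0+\ell\Lambda_0=t_{-\mu}w(\Lambda)$ with $\Lambda\in\widehat{P}^+$ (Lemma~\ref{tensordeclemma}), uses the length additivity $\ell\big(t_{-\sum_i\lambda_i}t_{-\mu}w\big)=\sum_i\ell(t_{-\lambda_i})+\ell(t_{-\mu}w)$ (Lemma~\ref{length}) to factor the operator as $\D_{t_{-\lambda_1}}\cdots\D_{t_{-\lambda_p}}\D_{t_{-\mu}w}$, and then exploits the $\widetilde{W}$-invariance of $\charc_{\lie h}\D(\ell,\lambda_0)$ to pull that character out of the operators, so that each application of $\D_{t_{-\lambda_i}}$ to $e(\ell\Lambda_0)$ produces one tensor factor via $\D_{t_{-\lambda_i}}(e(\ell\Lambda_0))=e(\ell\Lambda_0)\charc_{\lie h}\D(\ell,\ell\lambda_i)$; injectivity of $\bz[P]\hookrightarrow\bz[\widehat{P}]/I_\delta$ then finishes the proof. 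None of these ingredients appears in your proposal. Incidentally, your guess about the $\tt E^{(2)}_6$ hypothesis is also off the mark: $\tt E^{(2)}_6$ is a \emph{special} (not hyperspecial) twisted algebra, so it has nothing to do with the half-integer-graded elements or the $\phi$-exponents; the assumption $\ell\Lambda_0-w_0\lambda_0\in\widehat{P}^+$ is imposed because Lemma~\ref{tensordeclemma} is not established for $\tt E^{(2)}_6$, and the assumption supplies its conclusion directly (take $\mu=0$ and $w=w_0^{-1}$, using that $W$ fixes $\Lambda_0$).
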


\subsection{} We make a few remarks before proceeding to the proof. The tensor product decomposition of $\lie g$--stable Demazure modules was proved in \cite{FoL06} for the special case $\lambda_0=0$.
A more general case is considered recently in \cite{CSVW14} for the untwisted affine algebras with the exceptions $\tt E^{(1)}_{6,7,8}$ and $\tt F^{(1)}_4$.
As in these papers, the proof of Theorem~\ref{tensordec} uses the theory of Demazure operators and the following keyfact. 
\begin{lem}\label{tensordeclemma}
 Assume that $\widehat{\lie g}$ is not of type $\tt E^{(2)}_6$. Let $\lambda\in P^+$ and $\ell\in \bn$ such that $\lambda(\a^\vee)\leq \ell$ for all $\alpha\in \Delta$. Then there exists $\mu\in P^+$ and $w\in W$ such that
 $wt_{\mu}(-\lambda+\ell \Lambda_0)\in \widehat P^+.$ 
\end{lem}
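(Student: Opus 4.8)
The plan is to run the standard affine Weyl group reduction of $-\lambda+\ell\Lambda_0$ to the dominant chamber while keeping track of the translation part, and to show that under the box hypothesis a single reflection across the affine wall already produces a \emph{dominant} translation. First I would reformulate the target. Using the level-$\ell$ action recalled in Section~\ref{section1}, for $w\in W$ and $\mu\in P$ one has $wt_\mu(-\lambda+\ell\Lambda_0)=\ell\Lambda_0+w(\ell\mu-\lambda)\pmod{\bc\delta}$, since $W$ fixes $\Lambda_0$ modulo $\bc\delta$. Writing $\nu^+$ for the dominant $W$-conjugate of $\nu\in P$, the weight $\ell\Lambda_0+\eta$ lies in $\widehat P^+$ exactly when $\eta\in P^+$ and $(\ell\Lambda_0+\eta)(\alpha_0^\vee)\ge 0$, the latter unwinding to $\eta(\theta^\vee)\le \ell$ with $\theta$ the highest root attached to the $0$-node. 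Thus I must produce $w\in W$ and $\mu\in P^+$ with $w(\ell\mu-\lambda)\in P^+$ and $w(\ell\mu-\lambda)(\theta^\vee)\le\ell$. The hypothesis $\lambda(\alpha^\vee)\le\ell$ for $\alpha\in\Delta$ says precisely that $-\lambda$ lies in the box adjacent to the origin.

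Next I would give the construction. If $(-w_0\lambda)(\theta^\vee)\le\ell$, then $-w_0\lambda\in P^+$ already sits in the fundamental alcove and I take $w=w_0$, $\mu=0$. Otherwise I apply one reflection across the affine wall. The essential point is a translation computation in $\widehat W$: composing $w_0$ with the affine reflection carrying $-\lambda$ to $\ell\theta-\lambda$ (up to $W$) yields an element whose translation part is $-w_0\theta$, and since $\theta$ is the highest root we have $w_0\theta=-\theta$, so this translation equals $\theta\in Q^+\subseteq P^+$ and is automatically dominant. Re-dominantizing afterwards only multiplies on the left by finite reflections, which does not change the translation part. Hence there is $w\in W$ with $wt_\theta(-\lambda+\ell\Lambda_0)=\ell\Lambda_0+(\ell\theta-\lambda)^+$, and the translation $\mu=\theta$ is dominant as required. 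So everything reduces to the single inequality $(\ell\theta-\lambda)^+(\theta^\vee)\le\ell$.

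The main obstacle is exactly this single-fold estimate. Since $\theta^\vee$ is a dominant coweight, $(\ell\theta-\lambda)^+(\theta^\vee)=\max_{\beta^\vee\in W\theta^\vee}(\ell\theta-\lambda)(\beta^\vee)$, so I must check $(\ell\theta-\lambda)(\beta^\vee)=\ell\,\theta(\beta^\vee)-\lambda(\beta^\vee)\le\ell$ for every coroot $\beta^\vee$ in the $W$-orbit of $\theta^\vee$. For $\beta^\vee=\theta^\vee$ this reads $\lambda(\theta^\vee)\ge\ell$, which holds because we are in the folding regime; for the remaining \emph{positive} coroots it is immediate from $\theta(\beta^\vee)\le 1$ and $\lambda(\beta^\vee)\ge 0$. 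The delicate cases are the negative coroots in the orbit, where one must combine $\theta(\beta^\vee)\le 0$ with the box bound $\lambda(\alpha^\vee)\le\ell$; here the conclusion depends on the precise shape of the $\theta^\vee$-orbit and must be verified type by type for the finite types $C_n$, $B_n$, $G_2$ (and $A_1$). It is exactly this uniform bound that breaks down for finite type $F_4$, i.e. for $\widehat{\lie g}$ of type $\tt E^{(2)}_6$, where a single dominant fold need not land in the fundamental alcove — which is why that case is excluded here and replaced by the direct hypothesis $\ell\Lambda_0-w_0\lambda_0\in\widehat P^+$ in Theorem~\ref{tensordec}.
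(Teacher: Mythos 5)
There is a genuine gap, and it sits exactly at the step you defer: the ``verified type by type'' claim for the negative coroots in the orbit is not merely unproved, it is false, so the single-fold strategy cannot be repaired. Concretely, take $\widehat{\lie g}$ of type $\tt A^{(2)}_{7}$, so $\lie g$ is of type $\tt C_4$, and take $\ell=1$, $\lambda=\omega_1+\omega_2+\omega_3+\omega_4$. Then $\lambda(\alpha^\vee)\le\ell$ for all $\alpha\in\Delta$, while $\lambda(\theta^\vee)=\lambda(\alpha_1^\vee+\alpha_2^\vee+\alpha_3^\vee+\alpha_4^\vee)=4>\ell$, so you are in your folding branch with $\theta=2\epsilon_1$, $\theta^\vee=\epsilon_1$. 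Since $-\epsilon_2\in W\theta^\vee$, $\theta(\epsilon_2)=0$ and $\epsilon_2=\alpha_2^\vee+\alpha_3^\vee+\alpha_4^\vee$, your required inequality fails:
\begin{equation*}
(\ell\theta-\lambda)^+(\theta^\vee)\ \ge\ (\ell\theta-\lambda)(-\epsilon_2)\ =\ \lambda(\epsilon_2)\ =\ 3\ >\ 1=\ell,
\end{equation*}
and the same failure occurs if one replaces $\theta$ by the highest short root $\theta_s=\epsilon_1+\epsilon_2$ (which, incidentally, is the root actually attached to the zero node for the twisted types, not the highest root). The structural reason is that dominance of $w(\ell\mu-\lambda)$ together with the wall condition forces $\ell\mu-\lambda$ to be small against the \emph{entire} Weyl orbit of the highest coroot, so the translation part $\mu$ must in general be of size comparable to $\lambda/\ell$; a single $\theta$ is hopeless. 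Your closing diagnosis is also off: the obstruction is not special to $F_4$, i.e.\ $\tt E^{(2)}_6$ --- your bound already breaks in type $\tt C_n$, which is the case the lemma is chiefly needed for.

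A secondary but serious error: for $\tt A^{(2)}_{2n}$, which is the paper's principal (hyperspecial) case, your reformulation of dominance as $\eta(\theta^\vee)\le\ell$ is wrong, because there $\alpha_0^\vee=K-2\theta^\vee$, so the zero-node condition is $\eta(2\theta^\vee)\le\ell$; this factor of $2$ is precisely why the paper treats this type separately. The paper's actual argument makes no attempt at a single fold: for $\tt A^{(2)}_{2n}$ it constructs $\mu=\sum_j s_j\omega_j\in P^+$ by induction on the rank of $\tt C_n$, exploiting that the positive long coroots form the chain $\{\alpha_i^\vee+\cdots+\alpha_n^\vee : 1\le i\le n\}$, so that $|(\ell\mu-\lambda)(2\alpha^\vee)|\le\ell$ holds simultaneously for all long roots $\alpha$ (note that for $\ell=1$ this forces $\mu=\lambda$, again showing $\mu$ must be large); it then chooses $w$ to dominantize $\ell\mu-\lambda$ and closes the argument via $w(\ell\mu-\lambda)(2\theta^\vee)=(\ell\mu-\lambda)(2\alpha^\vee)$ for some long root $\alpha$. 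For the remaining twisted types it invokes the analogue of \cite[Proposition 3.5]{CSVW14}. To salvage your write-up you would have to replace the single affine reflection by an inductive, coordinate-wise construction of $\mu$ of this kind.
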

\begin{proof}
If $\widehat{\lie g}$ is not of type $\tt A^{(2)}_{2n}$ we have $\alpha_0^\vee=K-\theta^\vee$ (as in the untwisted cases), where $K$ denotes the central element of $\widehat{\lie g}$. Hence, the proof of the lemma proceeds similarly to \cite[Proposition 3.5]{CSVW14}. Now we focus on type $\tt A^{(2)}_{2n}$, where we have $\alpha_0^\vee=K-2\theta^\vee$. The proof of the lemma relies on the following fact: we claim that there exists $\mu\in P^+$ such that
 $$ |(\ell\mu-\lambda)(2\alpha^\vee)|\le \ell \ \ \text{for all $\alpha\in R_{\ell}^+$.}$$ We assume the claim and complete the proof of the lemma. Since $\ell\mu-\lambda\in P,$ there exists $w\in W$ such that $w(\ell\mu-\lambda)\in P^+.$ 
A simple calculation shows that  $wt_\mu(-\lambda+\ell\Lambda_0)=\ell\Lambda_0+w(\ell\mu-\lambda) \mod \bz\delta.$ Since $w(\ell\mu-\lambda)\in P^+$, 
$$\ell\Lambda_0+w(\ell\mu-\lambda)\in \widehat{P}^+ \ \  \text{iff} \ \ 
\ell\Lambda_0+w(\ell\mu-\lambda)(\alpha_0^\vee)=\ell-w(\ell\mu-\lambda)(2\theta^\vee)\ge 0.$$
Now $\ell-w(\ell\mu-\lambda)(2\theta^\vee)=\ell-(\ell\mu-\lambda)(2\alpha^\vee)$ for some $\alpha\in R_{\ell},$ which is non-negative by our claim. It remains to prove the claim which will proceed by induction on $n$. 
Observe that $\{\alpha_i^\vee+\cdots +\alpha_n^\vee : 1\le i\le n\}$ is the set of all positive long coroots of $\tt C_n.$ 
The induction begins at $n=1$ where we can take $\mu=0$ if $2\lambda(\alpha^{\vee})\leq \ell$ and otherwise $\mu=\omega_1$.
For the inductive step assume  that the result is proved for the  $\tt C_{n-1}$--subdiagram of $\tt C_n$ defined by the  simple roots $\{\alpha_2,\dots\alpha_n\}$ of $\tt C_n$. Let  $\mu'=\sum_{j=2}^{n}s_j\omega_j\in P^+$  such that 
$$|(\ell\mu'-\lambda)(2\alpha^\vee)|\le \ell,$$ for all positive long roots $\alpha$ of $\tt C_{n-1}$.
 The only additional positive long root in $\tt C_n$ is the highest coroot $\theta^\vee$. Moreover,  $\theta^\vee-\alpha_1^\vee$ is a coroot of $\tt C_{n-1}$ and so
  we take   $$\mu=\begin{cases}\mu'&   \text{if } |\lambda(2\theta^\vee)-(\ell\mu')(2\theta^\vee-2\alpha_1^\vee)|\le \ell,\\
\; \omega_1+\mu'& {\rm{otherwise}}.
\end{cases}$$ A simple calculation completes the proof.
\end{proof}

\subsection{} We now recall the Demazure character formula from \cite[Chapter VIII]{KU02}. Denote by $\D_w$ the Demazure operator associated with an arbitrary element $w\in \widetilde{W}$. 
Then we have 
$$\charc_{\widehat{\lie h}}\ V_{w\sigma}(\Lambda)=\charc_{\widehat{\lie h}}\ V_w(\sigma(\Lambda))=\D_{w}(e(\sigma\Lambda))=\D_{w\sigma}(e(\Lambda)),$$
where $\charc_{\widehat{\lie h}}$ is the character function with respect to $\widehat{\lie h}$ defined in the obvious way. 
We note here that we are only interested in $\lie g$--module structure of $\D(\ell,\lambda)$, and so in particular that we are interested only in $\lie h$--characters and hence it is enough to calculate the Demazure characters modulo the ideal $I_\delta$ generated by $e(\delta)-1.$
The next lemma is proven similarly as \cite[Proposition 2.8]{CSVW14}.
\begin{lem}\label{length}
Let $\lambda, \mu\in P^+$ and $w\in W$. Then we have
$\ell(t_{-\mu}t_{-\lambda}w)=\ell(t_{-\mu})+\ell(t_{-\lambda}w)$, where $\ell(-)$ denotes the extended length function of $\widetilde{W}$. 
In particular, $\D_{t_{-\mu}t_{-\lambda}w}=\D_{t_{-\mu}}\D_{t_{-\lambda}w}$.
\end{lem}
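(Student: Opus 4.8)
The plan is to prove the length identity $\ell(t_{-\mu}t_{-\lambda}w)=\ell(t_{-\mu})+\ell(t_{-\lambda}w)$ by the inversion-set method, following the argument of \cite[Proposition 2.8]{CSVW14}, and then to deduce the factorization of Demazure operators from it. For $v\in\widetilde W$ write $\Phi(v)=\{\beta\in\widehat R^+_{\rm re}:v\beta\in\widehat R^-\}$. Since every $\sigma\in\Sigma$ permutes $\widehat\Delta$ and hence preserves $\widehat R^+_{\rm re}$, one has $\ell(v)=|\Phi(v)|$ for all $v\in\widetilde W$. The cocycle identity for inversion sets then yields the standard criterion
\[
\ell(xy)=\ell(x)+\ell(y)\iff \Phi(x)\cap\Phi(y^{-1})=\emptyset .
\]
I would apply this with $x=t_{-\mu}$ and $y=t_{-\lambda}w$, noting that $y^{-1}=w^{-1}t_{\lambda}=t_{w^{-1}\lambda}w^{-1}$, so that the whole statement reduces to checking $\Phi(t_{-\mu})\cap\Phi(t_{w^{-1}\lambda}w^{-1})=\emptyset$.

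First I would record how translations act on real roots: $t_\nu$ fixes the finite part $\bar\beta$ of a real root $\beta=\bar\beta+r\delta$ and shifts the coefficient of $\delta$ by $-(\bar\beta,\nu)$ (up to the fixed normalization of the form, the only relevant point being its sign). Since $\mu\in P^+$, a short computation shows that $\Phi(t_{-\mu})$ consists of those $\beta=\bar\beta+r\delta\in\widehat R^+_{\rm re}$ with $\bar\beta\in R^-$ and $0<r\le(-\bar\beta,\mu)$; in particular every element of $\Phi(t_{-\mu})$ has $\bar\beta\in R^-$ and $r>0$, which is all I shall use. Now take such a $\beta$ and apply $y^{-1}$: because $w$ fixes $\delta$ and the form is $W$-invariant, $y^{-1}\beta=w^{-1}\bar\beta+(r-(\bar\beta,\lambda))\delta$. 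As $\lambda\in P^+$ and $\bar\beta\in R^-$ we have $(\bar\beta,\lambda)\le 0$, so the coefficient of $\delta$ is $\ge r>0$, whence $y^{-1}\beta\in\widehat R^+$. Thus no element of $\Phi(t_{-\mu})$ lies in $\Phi(y^{-1})$, the intersection is empty, and the length identity follows.

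Finally, the factorization $\D_{t_{-\mu}t_{-\lambda}w}=\D_{t_{-\mu}}\D_{t_{-\lambda}w}$ is the standard multiplicativity of Demazure operators along length-additive products in $\widetilde W$ (the operators attached to $\Sigma$ being the corresponding diagram automorphisms), so it is immediate once the length identity is established. The one genuine bookkeeping obstacle is type $\tt A^{(2)}_{2n}$: there $\widehat R^+_{\rm re}$ contains the half-integral roots $\tfrac12(R_\ell+(2\mathbb Z_++1)\delta)$ and the translation lattice is the coweight lattice, so the description of $\Phi(t_{-\mu})$ and the positivity test for $y^{-1}\beta$ must be rechecked on these roots. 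However, the decisive inequality is unaffected: the dominance of \emph{both} $\mu$ and $\lambda$ forces the pairings $(\bar\beta,\mu)$ and $(\bar\beta,\lambda)$ to carry the same sign as $\bar\beta$, and this is exactly what makes $\Phi(t_{-\mu})$ and $\Phi(y^{-1})$ disjoint; the half-integral roots are treated verbatim with $\bar\beta$ replaced by its half.
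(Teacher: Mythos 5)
Your proof is correct, and it is genuinely more self-contained than what the paper offers: the paper gives no argument at all for this lemma, merely asserting that it ``is proven similarly as [CSVW14, Proposition 2.8]'', i.e.\ it defers to the untwisted analogue (which in that reference is established via explicit length formulas for elements $t_{-\nu}w$ of the extended affine Weyl group) and leaves the adaptation to the twisted root systems implicit. Your inversion-set route checks out at every step: $\ell(v)=|\Phi(v)|$ persists on all of $\widetilde W$ because $\Sigma$ permutes $\widehat R^+_{\rm re}$; the cocycle computation gives $|\Phi(xy)|=|\Phi(x)|+|\Phi(y)|-2|\Phi(x)\cap\Phi(y^{-1})|$, which is exactly your additivity criterion; for $\mu\in P^+$ every $\beta=\bar\beta+r\delta\in\Phi(t_{-\mu})$ does have $\bar\beta$ negative (or half of a negative long root) and $r>0$, since roots with positive finite part can only have their $\delta$-coefficient increased by $t_{-\mu}$; and then $y^{-1}\beta=w^{-1}\bar\beta+\bigl(r-(\bar\beta,\lambda)\bigr)\delta$ has $\delta$-coefficient at least $r>0$, hence is positive, because a real root of $\widehat{\lie g}$ with strictly positive $\delta$-coefficient lies in $\widehat R^+_{\rm re}$. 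Note also that your argument only uses the weak containment for $\Phi(t_{-\mu})$ and the sign of the pairing, so the normalization of the invariant form is irrelevant, and your explicit remark that the half-integral roots $\tfrac12(R_\ell+(2\bz_++1)\delta)$ of type $\tt A^{(2)}_{2n}$ and the different translation lattices behave identically is precisely the content that the paper's ``similarly'' glosses over. The final passage from length-additivity to $\D_{t_{-\mu}t_{-\lambda}w}=\D_{t_{-\mu}}\D_{t_{-\lambda}w}$ is treated as standard multiplicativity of Demazure operators in both your write-up and the paper. In short: the paper's route buys brevity by citation; yours buys an actual verification covering the twisted cases, which is the only point where the lemma goes beyond its untwisted antecedent.
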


\subsection{Proof of Theorem~\ref{tensordec}} 
We assume that $\lambda_0(\a^\vee)\leq \ell$ for all $\alpha\in \Delta$. The general case can be easily deduced. 
By our assumption and Lemma~\ref{tensordeclemma} there exists $\mu\in P^+, w\in W$ and $\Lambda\in \widehat{P}^+$ such that
$t_{-\mu}w(\Lambda)=-\lambda_0+\ell\Lambda_0.$
Then we have $e(\ell\Lambda_0)\charc_{\lie h} \D(\ell,\lambda_0)=\D_{t_{-\mu}w}(e(\Lambda)) \mod  I_\delta.$
Since $$t_{-\sum_{i=1}^{p}\lambda_i}t_{-\mu}w(\Lambda)=-\left(\ell\sum\limits_{i=1}^{p}\lambda_i+\lambda_0\right)+\ell\Lambda_0 \mod \ I_\delta,$$
we have, by definition, $V_{t_{-\sum_{i=1}^{p}\lambda_i}t_{-\mu}w}(\Lambda)=\D(\ell, \lambda)$ and again we have
$$e(\ell\Lambda_0)\charc_{\lie h}\D(\ell,\lambda)= \D_{t_{-\sum_{i=1}^{p}\lambda_i}t_{-\mu}w}(e(\Lambda)) \mod  I_\delta.$$
From Lemma \ref{length} we get $\ell\left(t_{-\sum_{i=1}^{p}\lambda_i}t_{-\mu}w\right)=\sum\limits_{i=1}^{p}\ell(t_{-\lambda_i})+\ell(t_{-\mu}w),$ and hence using the properties of the Demazure operators we get,
$$\D_{t_{-\sum_{i=1}^{k}\lambda_i}t_{-\mu}w}\big(e(\Lambda)\big)=\D_{t_{-\lambda_1}}\cdots \D_{t_{-\lambda_k}}\D_{t_{-\mu}w}(e(\Lambda)) \mod I_\delta.$$
Then $$e(\ell\Lambda_0)\charc_{\lie h}\D(\ell,\lambda)=\D_{t_{-\lambda_1}}\cdots \D_{t_{-\lambda_k}}(e(\ell\Lambda_0)\charc_{\lie h} \D(\ell,\lambda_0)) \mod I_\delta.$$
Since $\charc_{\lie h} \D(\ell,\lambda_0)$ is $\widetilde{W}$--invariant, we get (see \cite[Lemma 13]{FoL06})
 $$e(\ell\Lambda_0)\charc_{\lie h}\D(\ell,\lambda)=\D_{t_{-\lambda_1}}\cdots \D_{t_{-\lambda_k}}(e(\ell\Lambda_0))\charc_{\lie h} \D(\ell,\lambda_0) \mod I_\delta.$$
Now since $\D_{t_{-\lambda_i}}(e(\ell\Lambda_0))=e(\ell\Lambda_0)\charc_{\lie h}\D(\ell,\ell\lambda_i)$ for all $1\le i\le p$, we get by repeated use of earlier arguments that
$$e(\ell\Lambda_0)\charc_{\lie h}\D(\ell,\lambda)=e(\ell\Lambda_0)\charc_{\lie h}\D(\ell,\ell\lambda_1)\cdots \charc_{\lie h}\D(\ell,\ell\lambda_k)\charc_{\lie h}\D(\ell,\lambda_0) \mod \ I_\delta.$$
The theorem is now immediate since $\bz[P]\hookrightarrow \bz[\widehat{P}]/I_\delta$ is an inclusion.
This completes the proof.
\subsection{}
We conclude this section by proving the invertibility of certain matrices which will be needed in the proof of Theorem~\ref{mainthmsection6}.
Let $r,s,p\in \bn,\  d,c\in \bz_+$ and write $$r=\sum^{s-1}_{j=0}\sum^p_{l=1}r_{jp+l}.$$
Let $A$ be the $(r\times r)$ square matrix given as in one of the following cases. 
\vspace{0,2cm}

\textsc{Case 1:}

\vspace{0,2cm}
Let $s=1, m=2$ and write $i=r_1+\cdots+r_{q}+y$ for some $0\leq q<p$ and $1\leq y\leq r_{q+1}$, then
\begin{equation}\label{m1}a_{i,j}(d)=\binom{2(d+j)-3}{y-1}z_{q+1}^{2(d-1+j)-y}\end{equation}
where we understand $a_{i,1}(0)=(-1)^{y-1}z_{q+1}^{-y}$.
\vspace{0,2cm}


\begin{rem}\label{remmat}
Because of 
$$\sum^N_{k=y}(-1)^k
\binom{N}{k}\binom{k-1}{y-1}=(-1)^{y}\ \mbox{ for all } N,y\in \bn,\ y\leq N$$ the entries $a_{i,1}(0)$ in \eqref{m1} can be rewritten as $a_{i,1}=[\varphi_{z_{q+1}}(t^{-1})]_{y-1}$, where $[f(t)]_u$ denotes the coefficient of $t^u$ in $f(t)$.
\end{rem}





\textsc{Case 2:}

\vspace{0,2cm}
Let $s=m$ and write 
$i=r_1+\cdots+r_{kp+q}+y$ for some $0\leq k< m$, $0\leq q<p$ and $1\leq y\leq r_{kp+(q+1)}$, then
\begin{equation}\label{m2}a_{i,j}(d)=\xi^{k(d+j-1+c)}\binom{d+j-1}{y-1}z_{q+1}^{d+j-y}\end{equation}

\begin{lem}\label{invmat}
The matrices in \eqref{m1} and \eqref{m2} are invertible.
\begin{pf}
The proof in both cases is similar and we focus ourself on Case 1. Let $a_{0},\dots,a_{r-1}$ be the coefficients, such that the corresponding linear combination of the columns of $A$ is zero. Let $Q(x)=
a_{0}+a_1x^2+a_2x^4+\cdots a_{r-1}x^{2(r-1)}$. By the definition of the matrix we obtain that $Q(x)$ has the following properties
$$Q(\pm z_i)=0,\ \frac{Q^{(j)}(\pm z_i)}{j!}=0 ,\ 1\leq i \leq p,\ 1\leq j\leq r_i-1.$$
Since  $z_i^2\neq z_j^2$ for all $i\neq j$ we obtain that $Q(x)$ is divisible by $\prod^p_{i=1}(x^2-z^2_i)^{r_i}$, which is a contradiction to the degree of $Q(x)$. Hence $a_{0}=\cdots=a_{r-1}=0$.

\end{pf}
\end{lem}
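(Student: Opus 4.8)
The plan is to prove that each matrix has nonzero determinant by showing that its columns are linearly independent: assuming a linear combination of the columns vanishes, I would package the resulting system of row equations into a single one-variable polynomial identity and then derive a contradiction between a forced high order of vanishing and the (small) degree of that polynomial. The observation that makes this work is that within a single block of rows — the rows sharing a fixed evaluation parameter $z_{q+1}$ (and, in Case 2, a fixed power $\xi^{k}$) — the binomial coefficients $\binom{2(d+j)-3}{y-1}$ in \eqref{m1}, respectively $\binom{d+j-1}{y-1}$ in \eqref{m2}, are, as $y$ runs through $1,\dots,r_{\bullet}$, exactly the coefficients produced by successively differentiating a monomial. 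Hence a block of size $r_{\bullet}$ encodes a Hermite-type condition: a polynomial together with its first $r_{\bullet}-1$ derivatives must vanish at the associated node.

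For Case 1, suppose $a_0,\dots,a_{r-1}$ are scalars making the corresponding combination of the columns of \eqref{m1} vanish, and set $\widehat Q(x)=\sum_{j=1}^{r}a_{j-1}x^{2(d-1+j)-1}=x^{2d-1}Q(x^2)$ with $Q(u)=\sum_{j=1}^{r}a_{j-1}u^{j-1}$ (absorbing the $d=0$ convention of Remark~\ref{remmat} by reading $\widehat Q$ as a Laurent polynomial). First I would check that the row equation indexed by $(q,y)$ is precisely $\tfrac{1}{(y-1)!}\widehat Q^{(y-1)}(z_{q+1})=0$; since all exponents of $\widehat Q$ are odd, $\widehat Q$ is an odd function, so these conditions force $\widehat Q$, hence $Q(x^2)$ as $z_{q+1}\neq 0$, to vanish to order $r_{q+1}$ at both $\pm z_{q+1}$. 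Using $z_i^2\neq z_j^2$, the polynomial $\prod_{i=1}^{p}(x^2-z_i^2)^{r_i}$ of degree $2r$ would then divide $Q(x^2)$, which has degree $2(r-1)<2r$; this is impossible unless $Q\equiv 0$, i.e. all $a_{j-1}=0$.

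Case 2 follows the same template with $R(x)=\sum_{j=1}^{r}a_{j-1}x^{d+j-1}=x^{d}\widetilde P(x)$, where $\widetilde P(x)=\sum_{j=1}^{r}a_{j-1}x^{j-1}$. The difference is the twist $\xi^{k(d+j-1+c)}$ carried by the block indexed by $(k,q)$: the plan is to divide the row equation by the nonzero constant $\xi^{k(d-1+c)}$ and observe that evaluating $\tfrac{1}{(y-1)!}R^{(y-1)}$ at the node $\xi^{k}z_{q+1}$ reproduces exactly this equation up to the scalar $\xi^{k(d-y)}$, which is harmless as it depends only on the fixed row. Thus $R$, and therefore $\widetilde P$, vanishes to order $r_{kp+(q+1)}$ at each $\xi^{k}z_{q+1}$; as these $mp$ nodes are pairwise distinct, the product $\prod_{k,q}(x-\xi^{k}z_{q+1})^{r_{kp+(q+1)}}$ of degree $r$ would divide $\widetilde P$ of degree $r-1$, forcing $\widetilde P\equiv 0$.

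I expect the main obstacle to lie in the bookkeeping for Case 2: one must verify that the root-of-unity twist is absorbed so that the correct node is $\xi^{k}z_{q+1}$ (confirming the $y$-dependent scalar $\xi^{k(d-y)}$ does not disturb the vanishing), and one must isolate the precise genericity hypothesis on the $z_i$ — distinctness of the $z_i^{m}$, equivalently of the $z_i$ modulo $\langle\xi\rangle$ — that guarantees the $mp$ nodes are distinct. Once these points are settled, the degree count closes both cases uniformly, and the statement is nothing more than the standard nonsingularity of a confluent Vandermonde system on distinct nodes.
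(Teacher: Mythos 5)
Your proof is correct and takes essentially the same route as the paper's: a vanishing linear combination of columns is packaged into Hermite-type (confluent Vandermonde) vanishing conditions on a single polynomial, which after stripping the nonvanishing monomial factor $x^{2d-1}$ (resp.\ $x^{d}$) has degree $2(r-1)$ (resp.\ $r-1$), too small to vanish to total order $2r$ at the distinct points $\pm z_i$ (resp.\ order $r$ at the $mp$ distinct nodes $\xi^{k}z_{q+1}$). The only difference is that you spell out the details the paper leaves implicit --- in particular Case 2 with the root-of-unity twist, which the paper dismisses as ``similar'' --- so your write-up is, if anything, more complete than the printed proof.
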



\section{Fusion product decomposition of twisted Demazure modules}\label{section6}
We shall give a fusion product decomposition of twisted Demazure modules analogous to the untwisted case (see for instance \cite{CSVW14},\cite{FoL07}). For the definition of fusion products for untwisted modules we refer to \cite{FL99}. In this section we do not consider the hyperspecial twisted current algebra seperately, i.e. $\CG$ stands for any twisted current algebra. Recall that $\CG$ can be realized as a subalgebra of $\mathcal{L}(\overline{\lie g})$, where $\overline{\lie g}=\mathfrak{sl}_{2n+1}$ if $\CG$ is hyperspecial (see Section~\ref{ev0}) and else as in Section~\ref{section4}. Even when $\CG$ is the special twisted current algebra, the main problem of defining fusion products is that $\CG$ is not stabilized by the Lie algebra homorphism $\overline{\lie g}[t]\longrightarrow \overline{\lie g}[t], t^k\mapsto (t+a)^k, a\in \bc^{\times}$. One needs different methods for the definition. We shall use freely the notation established in the earlier sections without further comments.

\subsection{}Before we define fusion products and state our main theorem we recall a general construction from \cite{FL99}. The element $d$ defines a $\mathbb Z_+$--graded Lie algebra structure on $\CG$. Let $\bu(\CG)[k]$ be the homogeneous component of degree $k$ (with respect to the grading induced by $d$) and recall that it is  a $\lie g$--module for all $k\in\mathbb{Z}_+$. Suppose now that we are given a  $\CG$--module $V$ which is generated by $v$. Define an increasing filtration $0\subset V^0\subset V^1\subset\cdots $ of $\lie g$-submodules of $V$ by$$V^k=\bigoplus_{s=0}^k \bu(\CG)[s] v.$$ The associated graded vector space $\gr V$ admits an action of $\CG$ given by: 
$$
x(v+V^{k})= xv+ V^{k+s},\ \ x\in\CG[s],\ \ v\in V^{k+1}.
$$
Furthermore, $\gr V$ is a cyclic $\CG$--module with cyclic generator $\bar v$, the image of $v$ in $\gr V$.

\subsection{}
For the rest of this section we shall relate the finite dimensional representation theory of $\CG$ and  $\mathcal{L}(\overline{\lie g})$. Any ideal of finite codimension in $\mathcal{L}(\overline{\lie g})$ is of the form $\overline{\lie g}\otimes \mathfrak{I}$ for some ideal $\mathfrak{I}\subset \bc[t,t^{-1}]$ and
$$
\mathfrak{I}\supseteq \big((t-z_1)\cdots (t-z_k)\big)^N \bc[t,t^{-1}]
$$
for some non-zero distinct complex numbers $z_s$, $1\le s\le k$ and $N\in\bn$. 
For $z\in \bc^{\times}$ and $N\in \bn$, we set $\overline{\lie g}_{z,N}=\overline{\lie g}\otimes \frac{\bc[t,t^{-1}]}{(t-z)^N}$ and for $\boz= (z_1,\dots, z_k)$ we set $\overline{\lie g}_{\boz,N}=\oplus^k_{s=1}\overline{\lie g}_{z_s,N}$. We see by the Chinese remainder theorem that  any finite--dimensional module $ \overline V$ of $\mathcal{L}(\overline{\lie g})$  can be regarded as a module for $\overline{\lie g}_{\boz,N}$ for some $\boz= (z_1,\dots, z_k)\in(\bc^\times)^k$ with pairwise distinct entries and some $N\in\bn$. Conversely, given a module $V$  of $\overline{\lie g}_{\boz,N}$ we shall construct a $\mathcal{L}(\overline{\lie g})$ and $\CG$--module respectively.

Given $\boz= (z_1,\dots, z_k)\in(\bc^\times)^k$ and $N\geq 1$, let 
$$
\ev
_{\boz, N}: L(\overline{\lie g})\to \overline{\lie g}_{\boz,N},
$$
be the canonical Lie algebra morphism and let 
$
\Psi_{\boz, N}
$ be the restriction of $\ev_{\boz, N}$ to the current algebra $\CG$. 
Hence we get modules $\ev^*_{\boz,N}V$ of $\mathcal{L}(\overline{\lie g})$ and $\Psi_{\boz, N} ^*V$ of $\CG$ by pulling back $V$ through the morphisms $\ev_{\boz, N}$ respectively $\Psi_{\boz, N}$. The proof of the following lemma can be found in \cite{CIK14}.

\begin{lem}\label{surm} If $z_s\ne z_p$ for $1\le s\ne p\le k$ then $\ev_{\boz, N}$ is surjective. If  $z_s^m \ne  z_p^m$ for $1\le s\ne p\le k$, then the restriction of $\Psi_{\boz, N}$ to $\oplus_{s\geq r}\CG[s]$ is surjective for any $r\in \bz_+$.\hfill\qedsymbol
\end{lem}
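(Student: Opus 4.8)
The first assertion is the Chinese Remainder Theorem with no twisting involved. Since the $z_p$ are pairwise distinct, the ideals $(t-z_p)^N\bc[t,t^{-1}]$ are pairwise comaximal, so the quotient map $\bc[t,t^{-1}]\to\bigoplus_{p=1}^{k}\bc[t,t^{-1}]/(t-z_p)^N$ is surjective; applying $\overline{\lie g}\otimes(-)$ preserves surjectivity and identifies the resulting map with $\ev_{\boz,N}$. This already gives the first claim.

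For the second assertion, the plan is to isolate the underlying scalar statement and then reduce the Lie-algebra claim to it. Write $A_p=\bc[t,t^{-1}]/(t-z_p)^N$, so that $\overline{\lie g}_{\boz,N}=\overline{\lie g}\otimes\bigl(\bigoplus_p A_p\bigr)$. The scalar claim is: for a fixed residue class $j\pmod m$ and any $r\in\bz_+$, the images of $\{t^s:s\equiv j\pmod m,\ s\ge r\}$ span $\bigoplus_p A_p$ over $\bc$ --- and this is exactly where the hypothesis $z_p^m\neq z_q^m$ enters. To prove it I would substitute $u=t^m$. Since $z_p\neq 0$ and we are in characteristic zero, in $A_p$ one has $u-z_p^m=(t-z_p)w_p$ with $w_p$ a unit, so the subalgebra $\bc[u]\subset\bc[t,t^{-1}]$ already surjects onto each $A_p$ and $A_p\cong\bc[u]/(u-z_p^m)^N$ as a $\bc[u]$-algebra. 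Because the $z_p^m$ are now pairwise distinct, a second application of the Chinese Remainder Theorem, this time in the variable $u$, shows that $\bc[u]=\operatorname{span}\{u^\ell:\ell\ge 0\}$ surjects onto $\bigoplus_p A_p$. Multiplying by the unit $t^j$ and by a sufficiently high power of the unit $u$ (to enforce $s\ge r$) then yields the scalar claim.

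With the scalar claim in hand I would treat the special case by an eigenspace decomposition. Here $\CG=\bigoplus_{i\ge 0}\overline{\lie g}_{\bar i}\otimes t^i$ is the fixed-point subalgebra, where $\overline{\lie g}_{\bar i}$ is the $\xi^{i}$-eigenspace of $\sigma$, and the $d$-grading is, up to normalization, the grading by the power of $t$; hence $\bigoplus_{s\ge r}\CG[s]=\bigoplus_{i\ge r'}\overline{\lie g}_{\bar i}\otimes t^i$ for a suitable bound $r'$. Fixing a basis of $\overline{\lie g}$ adapted to the eigenspace decomposition and a basis vector $x\in\overline{\lie g}_{\bar j}$, the elements $x\otimes t^i$ with $i\equiv j\pmod m$ and $i\ge r'$ lie in $\bigoplus_{s\ge r}\CG[s]$, and by the scalar claim their images span $x\otimes\bigl(\bigoplus_p A_p\bigr)$. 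Summing over the basis gives $\Psi_{\boz,N}\bigl(\bigoplus_{s\ge r}\CG[s]\bigr)=\overline{\lie g}_{\boz,N}$.

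The hyperspecial twisted current algebra is where the real difficulty lies, and I expect it to be the main obstacle. There $\CG$ is not a fixed-point subalgebra but is given by the explicit realization inside $\mathcal{L}(\mathfrak{sl}_{2n+1})$ from \secref{section1}, so a homogeneous element is a $\sigma$-twisted combination such as $X^\pm_{i,j}\otimes t^r\pm X^\pm_{2n+1-j,2n+1-i}\otimes(-t)^r$ rather than a single root vector times a monomial. The task is to disentangle these combinations grade by grade and check that their images under $\Psi_{\boz,N}$ still span; the natural tool is a generalized Vandermonde argument in the squared evaluation points, precisely the mechanism isolated later in \lemref{invmat}, whose proof constructs a polynomial in $t^2$ that is forced to vanish to order $N$ at each $\pm z_p$ and hence to be identically zero. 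Carrying out this bookkeeping uniformly over all root types in the realization --- short roots of both shapes, long roots, and the Cartan elements $h_{i,r\delta}$ --- is the only genuinely delicate step; the scalar input and the eigenspace reduction are otherwise identical to the special case.
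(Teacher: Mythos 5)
First, a point of comparison: the paper never proves this lemma at all --- it states it and defers to \cite{CIK14} --- so there is no in-paper argument to measure your proposal against; what matters is whether your argument stands on its own. Most of it does. The Chinese remainder argument for $\ev_{\boz,N}$ is correct; so is your scalar claim --- that the images of $\{t^s : s\equiv j \ (\mathrm{mod}\ m),\ s\ge r\}$ span $\bigoplus_p A_p$, where $A_p=\bc[t,t^{-1}]/(t-z_p)^N$ --- proved via the substitution $u=t^m$, the observation that $u-z_p^m$ differs from $t-z_p$ by a unit of $A_p$, and a second application of the Chinese remainder theorem at the pairwise distinct points $z_p^m$; this is exactly where the hypothesis $z_s^m\ne z_p^m$ enters. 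The eigenspace reduction for the special twisted current algebras, where the $d$-grade coincides with the $t$-degree and $\CG=\bigoplus_{i\ge0}\overline{\lie g}_{\bar i}\otimes t^i$, is also correct and complete.

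The genuine gap is the hyperspecial case, which you explicitly leave as a sketch. Since \secref{section6} and the proof of Theorem~\ref{mainthmsection6} use the lemma for the hyperspecial algebra, this case is part of the statement and cannot be deferred. Moreover, your sketch points at heavier machinery than is needed: nothing like the Vandermonde-type invertibility of \lemref{invmat} is required, only the scalar claim you already proved together with a sum/difference trick over the two parity classes. Concretely, in the realization of $\CG\subset\mathcal{L}(\mathfrak{sl}_{2n+1})$ from \secref{section1}, every Chevalley generator of $\overline{\lie g}$ occurs in one of two ways. Either it occurs alone with $t$-exponents of a single parity --- the vectors $X^{\pm}_{i,2n+1-i}$ appear only as $X^{\pm}_{i,2n+1-i}\otimes t^{2r\pm1}$, odd exponents --- in which case the scalar claim with $j$ odd already puts $X^{\pm}_{i,2n+1-i}\otimes\bigl(\bigoplus_p A_p\bigr)$ in the image; or it occurs in a twisted pair of the form $\bigl(X_{\beta_1}+\varepsilon(-1)^{s} X_{\beta_2}\bigr)\otimes t^{s}$ with $\varepsilon$ a fixed sign (short roots of both shapes, the elements $x_{\frac12(\pm\alpha+(2r+1)\delta)}$, and the Cartan elements $h_{i,r\delta}$). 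Since the $d$-grade of such an element is $2s$ up to a bounded shift, for any grade bound both parities of $s$ occur beyond it; applying the scalar claim separately to even and odd $s$ yields both $(X_{\beta_1}+\varepsilon X_{\beta_2})\otimes\bigl(\bigoplus_p A_p\bigr)$ and $(X_{\beta_1}-\varepsilon X_{\beta_2})\otimes\bigl(\bigoplus_p A_p\bigr)$ in the image, hence $X_{\beta_1}$ and $X_{\beta_2}$ separately. Running over the root types (and the pairs $(H_i,H_{2n+1-i})$, $1\le i\le n$, which span the Cartan) covers a basis of $\mathfrak{sl}_{2n+1}$ and finishes the hyperspecial case. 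So the missing step is real, but it is filled by the two ingredients you already set up, not by the bookkeeping you anticipated.
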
 
So if in addition $V$ is cyclic, we obtain under further restrictions on $\boz$ that $\ev^*_{\boz,N}V$ respectively $\Psi_{\boz, N}^*V$ is a cyclic $\mathcal{L}(\overline{\lie g})$ respectively $\CG$--module. 
\subsection{}
We take the next proposition as a starting point for the definition of fusion products of $\CG$--modules.
\begin{prop}\label{cmf}
Let $V_1,\dots,V_p$ be cyclic finite--dimensional $\mathcal{L}(\overline{\lie g})$ modules and $N\in \bn$, such that the action of $\mathcal{L}(\overline{\lie g})$ on $V_i$ factors through $\overline{\lie g}_{\boz_i,N}$ for $i=1,\dots,p$ and $\boz_i=(z_{(1,i)}\dots, z_{(k_i,i)})$. 
Suppose $z_{(r,i)}^m\neq z_{(s,j)}^m$ for all $(r,i)\neq (s,j)$. Furthermore, let $W$ be a cyclic finite--dimensional graded $\CG$--module. Then we have that
$$\Psi^{*}_{\boz_1,N}V_1\otimes \cdots\otimes \Psi^{*}_{\boz_p,N}V_p\otimes W$$
is a cyclic $\bu(\CG)$ module.
\begin{pf}
Let $k=\sum^p_{i=1}k_i$ and $\boz= (z_{(1,1)}\dots, z_{(k_1,1)},z_{(1,2)}\dots, z_{(k_2,2)},\dots)\in(\bc^\times)^k$. The tensor product $V_1\otimes \cdots\otimes V_p$ is a cyclic module for $\oplus_{i=1}^p \overline{\lie g}_{\boz_i,N}$ and since $\Psi_{\boz, N}$ is surjective by Lemma~\ref{surm} we obtain that $\Psi^{*}_{\boz, N}(V_1\otimes \cdots\otimes V_p)\cong \Psi^{*}_{\boz_1,N}V_1\otimes \cdots\otimes \Psi^{*}_{\boz_p,N}V_p$ is a cyclic module for $\bu(\CG)$. In fact, more generally we see that $$\Psi^{*}_{\boz, N}(V_1\otimes \cdots\otimes V_p)=\bu\big(\oplus_{s\geq r}\CG[s]\big)v_1\otimes\cdots\otimes v_p$$
for any $r\in\bz_+$. Since $W$ is finite--dimensional graded $\CG$--module it follows that 
$\CG[r]W=0,$
for $r$ sufficiently large. Hence, $\Psi^{*}_{\boz_1,N}V_1\otimes \cdots\otimes \Psi^{*}_{\boz_p,N}V_p\otimes W$
is a cyclic $\bu(\CG)$ module.
\end{pf}
\end{prop}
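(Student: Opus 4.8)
The plan is to reduce the claim to the cyclicity of a single pulled-back tensor product of $\mathcal{L}(\overline{\lie g})$-modules, and then to use the grading on $W$ together with the strong surjectivity of Lemma~\ref{surm} to absorb the extra factor $W$. First I would concatenate the evaluation data: setting $k=\sum_{i=1}^p k_i$ and letting $\boz$ be the tuple obtained by listing all the $z_{(r,i)}$, the hypothesis $z_{(r,i)}^m\neq z_{(s,j)}^m$ for $(r,i)\neq(s,j)$ forces in particular that all entries of $\boz$ are pairwise distinct. By the Chinese remainder theorem $\oplus_{i=1}^p\overline{\lie g}_{\boz_i,N}\cong\overline{\lie g}_{\boz,N}$, and since each $V_i$ is cyclic and $\overline{\lie g}_{\boz_i,N}$ acts only on the $i$-th factor, the tensor product $V_1\otimes\cdots\otimes V_p$ is a cyclic $\overline{\lie g}_{\boz,N}$-module generated by $v_1\otimes\cdots\otimes v_p$; here one uses $\bu(\oplus_i\overline{\lie g}_{\boz_i,N})=\bigotimes_i\bu(\overline{\lie g}_{\boz_i,N})$ and $\bu(\overline{\lie g}_{\boz_i,N})v_i=V_i$.

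Next, since $\Psi_{\boz,N}$ is a Lie algebra homomorphism it intertwines with the formation of tensor products, so that $\Psi^*_{\boz,N}(V_1\otimes\cdots\otimes V_p)\cong\Psi^*_{\boz_1,N}V_1\otimes\cdots\otimes\Psi^*_{\boz_p,N}V_p$ as $\CG$-modules. The condition $z_{(r,i)}^m\neq z_{(s,j)}^m$ is precisely the hypothesis of Lemma~\ref{surm} guaranteeing that the restriction of $\Psi_{\boz,N}$ to $\oplus_{s\geq r}\CG[s]$ is surjective for every $r\in\bz_+$. From this I obtain the refined statement
$$\Psi^*_{\boz,N}(V_1\otimes\cdots\otimes V_p)=\bu\big(\oplus_{s\geq r}\CG[s]\big)(v_1\otimes\cdots\otimes v_p)\quad\text{for every }r\in\bz_+,$$
which is the key strengthening: the whole module can be generated using only elements of $\CG$ of arbitrarily high degree.

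Finally I would bring in $W$. As $W$ is finite-dimensional and graded, there is $r_0$ with $\CG[s]W=0$ for all $s\geq r_0$, and for such $s$ the coproduct action of $x\in\CG[s]$ on $A\otimes W$, with $A:=\Psi^*_{\boz,N}(V_1\otimes\cdots\otimes V_p)$, collapses to $x(a\otimes w)=xa\otimes w$. Writing $a_0=v_1\otimes\cdots\otimes v_p$ and letting $w$ generate $W$, the refined statement with $r=r_0$ then gives $A\otimes w=\bu(\oplus_{s\geq r_0}\CG[s])(a_0\otimes w)\subseteq M:=\bu(\CG)(a_0\otimes w)$. To reach all of $A\otimes W$ I would run a short induction on $W=\bu(\CG)w$: for $y\in\CG$ and $a\in A$ the identity $a\otimes yw=y(a\otimes w)-ya\otimes w$ places $a\otimes yw$ in $M$, since both terms on the right already lie in $M$; iterating yields $a\otimes uw\in M$ for all $u\in\bu(\CG)$, hence $A\otimes W=M$ and the module is cyclic on $a_0\otimes w$.

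I expect the main obstacle to lie in this last step, and specifically in the interplay between the two degree bounds. Lemma~\ref{surm} only lets me recover the factors $V_i$ through high-degree elements of $\CG$, and these high-degree elements are exactly the ones that could act nontrivially across the tensor product; the argument works because they annihilate $W$ once their degree exceeds $r_0$, so I must be careful to generate $A$ using only degrees $\geq r_0$ and then separately propagate from $A\otimes w$ to $A\otimes W$ via the coproduct. Everything preceding this --- the concatenation of points, the Chinese remainder reduction, and the compatibility of pullback with tensor products --- is routine once the distinctness hypothesis on the $z_{(r,i)}^m$ is in place.
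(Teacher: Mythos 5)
Your proposal is correct and follows essentially the same route as the paper's proof: concatenate the evaluation points, use the strong form of Lemma~\ref{surm} to generate $\Psi^{*}_{\boz,N}(V_1\otimes\cdots\otimes V_p)$ by elements of arbitrarily high degree, and then exploit $\CG[s]W=0$ for large $s$. The only difference is that you spell out the final propagation from $A\otimes w$ to $A\otimes W$ via the coproduct identity $a\otimes yw=y(a\otimes w)-ya\otimes w$, a step the paper leaves implicit in its concluding ``Hence''.
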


\begin{defn}
Let $V_1,\dots,V_p,W$ as in Proposition~\ref{cmf}. 
We define the fusion product of $V_1,\dots,V_p,W$ as 
$$V_1*\cdots*V_p*W:=\gr \big(\Psi^{*}_{\boz_1,N}V_1\otimes \cdots\otimes \Psi^{*}_{\boz_p,N}V_p\otimes W\big)$$
\end{defn}
Note that the definition of the fusion product depends as in the untwisted case on parameters, so it would be more appropriate to denote the fusion product by $(V_1*\cdots*V_s*W)(\mathbf z,N)$. To keep the notation as simple as possible we omit almost always the parameters in the notation for the fusion product. It is conjectured that the fusion product is in fact independent of $\mathbf z$ and is proved in certain cases by various people (see for instance \cite{CSVW14,FF02,FL99,FoL07}). We will prove the independence of $\mathbf z$ for twisted Demazure modules.
\subsection{}
Our aim is to write $\D(\ell,\lambda)$, $\lambda\in P^+$, as a fusion product of suitable $\bu(\CG)$--modules. In particular, we will relate a twisted module with a untwisted module and therefore we shall regard $\lie h^*$ as a subspace of $(\overline{\lie{h}})^*$ by extending $\mu\in\lie h^*$ as follows
$$\mu(H_i)=\mu(\alpha_i^{\vee}),\ \ 1\le i\le n,\qquad  \mu(H_i)=0,\ \ i>n.$$ 
Via this identification, the set of fundamental weights for $\overline{\lie g}$ contains the fundamental weights for $\lie g$, allowing us to denote by $\omega_i$ ($i\in I$), the fundamental weights  for $\overline{\lie g}$.
Conversely, given $\mu\in (\overline{\lie{h}})^*$ we regard $\mu$ as an element of $\lie h^*$ by restricting $\mu$ to $\lie h$. So we have a map $$(\overline{\lie{h}})^*\rightarrow \lie h^{*},\ \mu\mapsto \mu\vert \lie h.$$ 
Note that the image of $P^{+}_{\overline{\lie g}}$ under the above map is contained in $P^+$ and we denote the set of all preimages of $\lambda\in P^+$ by $\lambda^{\sigma}$. In other words $\lambda^{\sigma}=\{\overline{\lambda}\in P^{+}_{\overline{\lie g}}\mid \overline{\lambda}\vert\lie h=\lambda\}.$ Especially $\lambda\in \lambda^{\sigma}$.

\subsection{}
Any untwisted $\overline{\lie g}$-stable Demazure module $\D_{\overline{\lie g}}(\ell,\lambda)$ is a finite--dimensional graded module for $\overline{\lie g}[t]$ and hence cyclic for $\overline{\lie g}\otimes \frac{\bc[t]}{t^N}$ for some $N\in \mathbb N.$
Since we have an isomorphism 
$$\varphi_z: \frac{\bc[t,t^{-1}]}{(t-z)^N}\longrightarrow \frac{\bc[t]}{t^N},\quad \big(\mbox{resp.}\ \varphi_z(\overline{\lie g}): \overline{\lie g}_{z,N}\longrightarrow \overline{\lie g}\otimes\frac{\bc[t]}{t^N}\big),\ z\in \bc^{\times}$$ 
the module $\D_{\overline{\lie g}}(\ell,\lambda)$ can be regarded as a cyclic module for $\overline{\lie g}_{z,N}$ by pulling back $\D_{\overline{\lie g}}(\ell,\lambda)$
through $\varphi_z(\overline{\lie g})$. To avoid introducing more notation we will use $\D_{\overline{\lie g}}(\ell,\lambda)$ to denote the above representation too.
Therefore, $\Psi^{*}_{z,N}\D_{\overline{\lie g}}(\ell,\lambda)$ is well-defined and cyclic as a $\CG$--module.
For the rest of this section we fix $\mathbf z=(z_1,\dots,z_p)$, such that $z_i^m\neq z_j^m$ for all $i\neq j$.
Note that 
$$\varphi_{z}(t^{k})=(t+z)^{k} \mbox{ if }k\geq 0 \mbox{ and }\varphi_{z}(t^{-1})=\sum_{j=1}^{N}(-1)^{j-1}\binom{N}{j}(t+z)^{j-1}z^{-j}.$$

We record the following lemma whose proof is a combination of the results in
\cite{HKOTT02} and \cite{H06} and an induction argument. A brief sketch of the proof is postponed to the end of Section~\ref{section7}. 
\begin{lem}\label{lemwe}
Let $\overline{\lambda}\in \lambda^{\sigma}$, then
$$ \dim \D_{\overline{\lie g}}(\ell,\ell\overline{\lambda})=\dim \D(\ell,\ell\lambda)$$

\end{lem}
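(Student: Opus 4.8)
The plan is to reduce the equality of dimensions to a comparison of individual Kirillov--Reshetikhin dimensions and then to deduce the latter from the $Q$-systems. First I would expand $\overline{\lambda}=\sum_{i'}b_{i'}\omega_{i'}$ over the fundamental weights of $\overline{\lie g}$ and $\lambda=\sum_i a_i\omega_i$ over those of $\lie g$; the hypothesis $\overline{\lambda}\in\lambda^\sigma$, namely $\overline{\lambda}(\alpha_i^\vee)=\lambda(\alpha_i^\vee)$ together with $\alpha_i^\vee=\sum_{j'\in O_i}\alpha_{j'}^\vee$ for the $\sigma$-orbit $O_i$ of nodes of $\overline{\lie g}$ mapping to $i$, forces $\sum_{j'\in O_i}b_{j'}=a_i$. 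Applying the tensor product decomposition of Demazure modules (\thmref{tensordec} with $\lambda_0=0$ on the twisted side, and its simply-laced analogue \cite{FoL06}) in the case where all constituents are fundamental, both quantities become products,
$$\dim\D_{\overline{\lie g}}(\ell,\ell\overline{\lambda})=\prod_{i'}\big(\dim\D_{\overline{\lie g}}(\ell,\ell\omega_{i'})\big)^{b_{i'}},\qquad \dim\D(\ell,\ell\lambda)=\prod_i\big(\dim\D(\ell,\ell\omega_i)\big)^{a_i}.$$
Since $\sigma$ induces isomorphisms among the modules $\D_{\overline{\lie g}}(\ell,\ell\omega_{i'})$ for $i'$ in a fixed orbit, such a factor depends only on $O_i$, and the relation $\sum_{j'\in O_i}b_{j'}=a_i$ reduces the problem to the single equality $\dim\D_{\overline{\lie g}}(\ell,\ell\omega_{i'})=\dim\D(\ell,\ell\omega_i)$ for each node $i$ and any $i'\in O_i$.

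To establish this per-node equality I would pass to Kirillov--Reshetikhin modules: as recalled in the introduction and in \cite{CM06,CM07,FoL07}, the Demazure modules $\D(\ell,\ell\omega_i)$ and $\D_{\overline{\lie g}}(\ell,\ell\omega_{i'})$ are, as $\lie g$-- respectively $\overline{\lie g}$--modules, the corresponding (twisted resp.\ untwisted) Kirillov--Reshetikhin modules. Their characters, hence dimensions, obey the $Q$-systems: the untwisted system together with its explicit solutions is available from \cite{H06}, while the shape of the twisted system is given in \cite{HKOTT02}. The decisive structural fact is that the twisted $Q$-system is the folding of the untwisted one, so that the recursion in the level for a node $i$ of $\lie g$ is assembled from the recursions attached to the nodes of the orbit $O_i$ in $\overline{\lie g}$.

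The comparison then proceeds by induction on the level $\ell$. For $\ell=0$ both modules are trivial, and for $\ell=1$ the level-one Demazure modules coincide with local Weyl modules on both sides (\cite{CIK14,FK11}), whose dimensions are read off from the classical decompositions of \cite{FoL06,CM06,CM07}. For the inductive step I would substitute the already-matched dimensions at levels $\ell-1$ and $\ell$ into the $Q$-system relation $(Q^{(i)}_\ell)^2=Q^{(i)}_{\ell+1}Q^{(i)}_{\ell-1}+(\text{neighbour terms})$ and solve for $Q^{(i)}_{\ell+1}$, the neighbour contributions matching by the induction hypothesis and the orbit correspondence.

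I expect the main obstacle to be the careful bookkeeping of the folding: one must verify that the twisted $Q$-system of \cite{HKOTT02}, specialized to dimensions, is exactly the system satisfied by the folded untwisted solutions of \cite{H06}, with the correct initial data and the correct reading of the neighbour terms across short and long roots. The most delicate instance is the hyperspecial type $\tt A^{(2)}_{2n}$, where $\CG$ is not the naive fixed-point subalgebra of $\overline{\lie g}[t]$ and the node $\alpha_n$ carries the half-integral imaginary shifts; confirming that the recursion still folds correctly there is where the argument needs the most care. Once this matching of recursions is secured, the induction and the product formula above close the proof.
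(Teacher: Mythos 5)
Your proposal is correct and takes essentially the same route as the paper's own proof: both reduce, via the tensor product decompositions (Theorem~\ref{tensordec} on the twisted side and \cite{FoL06} on the untwisted side), to the single equality $\dim\D_{\overline{\lie g}}(\ell,\ell\omega_i)=\dim\D(\ell,\ell\omega_i)$ for fundamental weights, and then induct on the level $\ell$ using the two $Q$--systems (Proposition~\ref{qsysteml} and Remark~\ref{remunt}) together with the observation that folding the untwisted neighbour set $\{p: p\sim i\}$ under $\sigma$ gives exactly $\Theta(i)$. The paper settles your flagged ``bookkeeping'' concern just as you anticipate, checking the orbit replacement $p\mapsto\sigma^j(p)$ node by node (including the hyperspecial example $\overline{\lie g}=\tt A_{2n}$, $i=n$), and handles the base case $\ell=1$ by \cite[Lemma 5.3]{FK11} in the special case and a direct dimension computation in the hyperspecial case, matching your level-one comparison via local Weyl modules.
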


\subsection{}
Recall that the presentation of $\D(\ell,\lambda)$ is greatly simplified in Theorem~\ref{dem2} and Theorem~\ref{dem3} respectively. Our main theorem of this section is the following. 

\begin{thm}\label{mainthmsection6}
Let $\lambda\in P^+$ and $p,\ell \in \bn$ and write
$$\lambda=\ell \Bigg(\sum^{p}_{i=1}\lambda_i\Bigg)+\lambda_{0},\ \text{for some } \lambda_i\in P^+,\ 0\leq i\leq p.$$ 
If $\widehat{\lie g}$ is of type $\tt E^{(2)}_6$ we assume that $\ell\Lambda_0-w_0\lambda_0\in \widehat{P}^+$.
Fix arbitrary elements $\overline{\lambda}_i\in \lambda_i^{\sigma}$, $1\leq i\leq p$. Then we have an isomorphism of $\CG$--modules

$$\D(\ell,\lambda)\cong_{\CG}  \D_{\overline{\lie g}}(\ell,\ell\overline{\lambda}_1)*\cdots* \D_{\overline{\lie g}}(\ell,\ell\overline{\lambda}_{p})*\D(\ell,\lambda_{0}).$$
\begin{pf}
Recall that the RHS is a cyclic $\CG$--module generated by $\mathbf v=v_{\ell,\ell\overline{\lambda}_1}* \cdots*v_{\ell,\ell\overline{\lambda}_{p}}*v_{\ell,\lambda_{0}}$ (see Proposition~\ref{cmf}) and since the dimension of both sides coincide by the tensor product decomposition (see Theorem~\ref{tensordec}) and Lemma~\ref{lemwe} we only need to show that we have a surjective map from $\D(\ell,\lambda)$ to the fusion product. Since the RHS is obviously a quotient of the local Weyl module $W_{\loc}(\lambda)$, we shall verify that $\mathbf v$ satisfies the simplified relations in \eqref{finer1}-\eqref{finer2} and \eqref{finer3} respectively. 
We start proving the theorem for the hyperspecial twisted current algebra.
First consider the elements $x_{-\alpha+2r\delta}$ for $r\in \bz_+$ and $\alpha=2(\alpha_i+\cdots+\alpha_n)-\alpha_n$. For any choice of complex numbers $c_0,\dots,c_{r-1}$ we obtain
\begin{align*} x_{-\alpha+2r\delta}\mathbf v&=\Big(x_{-\alpha+2r\delta}-\sum^{r-1}_{k=0} c_k x_{-\alpha+2k\delta}\Big)\mathbf v
&\\&=\sum^p_{q=1}v_{\ell,\ell\overline{\lambda}_{1}}*\cdots*\Big(X_{i,2n+1-i}^- \otimes \big((t+z_q)^{2r- 1}-\sum^{r-1}_{k=0} c_k\varphi_{z_q}(t^{2k- 1})\big)\Big)v_{\ell,\ell\overline{\lambda}_{q}}*\cdots*v_{\ell,\lambda_{0}}&\\&+\Big(v_{\ell,\ell\overline{\lambda}_{1}}*\cdots*v_{\ell,\ell\overline{\lambda}_{p}}*\big(x_{-\alpha+2r\delta}-\sum^{r-1}_{k=0} c_k x_{-\alpha+2k\delta}\big)v_{\ell,\lambda_{0}}\Big)\end{align*}
We shall make a particular choice of the coefficients $c_k$ such that for all $1\leq q\leq p$
\begin{equation}\label{wieesop}\Big(X_{i,2n+1-i}^- \otimes \big((t+z_q)^{2r- 1}-\sum^{r-1}_{k=0} c_k\varphi_{z_q}(t^{2k- 1})\big)\Big)v_{\ell,\ell\overline{\lambda}_{q}}=X_{i,2n+1-i}^- \otimes t^{\lambda_q(\alpha^{\vee})}v_{\ell,\ell\overline{\lambda}_{q}} \end{equation}
Write 
\begin{equation}\label{write}\lambda(\alpha^{\vee})=(s_{\alpha}-1)\ell+m_{\alpha},\quad \lambda_{0}(\alpha^{\vee})=(d-1)\ell+m_{\alpha},\quad d\in \bz_+\end{equation}
Let $r=s_{\alpha}=d+\big(\lambda_1+\cdots+\lambda_p\big)(\alpha^{\vee})$ and set $c_0,\dots,c_{d-1}=0$. We can transform \eqref{wieesop} into a system of linear equations ($(r-d)$ equations with $(r-d)$ unknowns). The corresponding square matrix $A=(a_{i,j}(d))$ is exactly the matrix given in \eqref{m1} ($r_q=\lambda_q(\alpha^{\vee}$)), which is invertible by Lemma~\ref{invmat}. Hence we can choose coefficients $c_0,\dots,c_{r-1}$ such that \eqref{wieesop} holds. Since 
$$\lambda_q(\alpha^{\vee})=\overline{\lambda}_q(\alpha^{\vee})=\overline{\lambda}_q(H_{i,2n+1-i}) \mbox{ and } x_{-\alpha+k\delta}v_{\ell,\lambda_{0}}=0 \mbox{ for all $k\geq d$,}$$ we are done in this case.
Now suppose $m_{\alpha}<\ell$ and set $r=s_{\alpha}-1=(d-1)+\big(\lambda_1+\cdots+\lambda_p\big)(\alpha^{\vee})$ and $c_0,\dots,c_{d-2}=0$. Then we can transform \eqref{wieesop} again into a system of linear equations ($(r-d+1)$ equations with $(r-d+1)$ unknowns), where the corresponding square matrix is of similar form.
Hence \eqref{wieesop} holds and by repeating this argument we get
$$x_{-\alpha+2r\delta}^{m_{\alpha}+1}\mathbf v=c_{d-1}^{m_{\alpha}+1}\big(v_{\ell,\ell\overline{\lambda}_{1}}*\cdots*v_{\ell,\ell\overline{\lambda}_{p}}*x_{-\alpha+2(d-1)\delta}^{m_{\alpha}+1}v_{\ell,\lambda_{0}}\big)=0.$$
Now consider the remaining elements $x_{-\alpha+r\delta}$, $\alpha\in R^+_s$ respectively $x_{1/2(-\alpha+(2r+1)\delta)}$, $\alpha\in R^+_{\ell}$. 
Recall from our explicit realization of $\CG$ as a subspace of $\mathcal{L}(\overline{\lie g})$ (see Section~\ref{ev0}) that these elements are of the form 
$$X_{\beta_1}^{-}\otimes t^{j}+(-1)^{c+j}X_{\beta_2}^{-}\otimes t^{j}, \mbox{ for some $c\in\bz_+$, $\beta_1,\beta_2\in R_{\overline{\lie g}}^+$},$$
where almost always $j=r$ except for $\alpha=\sum_{q=i}^j\alpha_q+2\sum_{q=j+1}^{n-1}\alpha_q+\alpha_n$, $1\leq i\leq j <n$ we have $j=r-1$. For simplicity we assume $j=r$, since the same argument works for $j=r-1$.
We have
\begin{align*} &\Big(\big(X_{\beta_1}^-+(-1)^{c+r}X_{\beta_2}^-\big)\otimes t^{r}\Big)\mathbf v=\sum^{p}_{q=1}v_{\ell,\ell\overline{\lambda}_{1}}*\cdots*\Big(X_{\beta_1}^- \otimes \big((t+z_q)^{r}-\sum^{r-1}_{k=0} c_k\varphi_{z_q}(t^{k})\big)+&\\&
+X_{\beta_2}^- \otimes \big((-1)^{c+l}(t+z_q)^{r}-\sum^{r-1}_{k=0} (-1)^{c+k}c_k\varphi_{z_q}(t^{k})\big)\Big)v_{\ell,\ell\overline{\lambda}_{q}}*\cdots*v_{\ell,\lambda_{0}}
&\\&+\Big(v_{\ell,\ell\overline{\lambda}_{1}}*\cdots*v_{\ell,\ell\overline{\lambda}_{p}}*\Big(\big(X_{\beta_1}^-+(-1)^{c+r}X_{\beta_2}^-\big)\otimes t^{r}-\sum^{r-1}_{k=0}c_k(X_{\beta_1}^-+(-1)^{c+k}X_{\beta_2}^-\big)\otimes t^{k}\Big)v_{\ell,\lambda_{0}}\Big)
\end{align*}
Again we shall make a particular choice of the coefficients such that for all $1\leq q\leq p$ 
\begin{flalign}\label{diezweite}\notag\Big(X_{\beta_1}^- \otimes \big((t+z_q)^{r}-\sum^{r-1}_{k=0} c_k\varphi_{z_q}(t^{k})\big)
&+X_{\beta_2}^- \otimes \big((-1)^{c+r}(t+z_q)^{r}-\sum^{r-1}_{k=0} (-1)^{c+k}c_k\varphi_{z_q}(t^{k})\big)\Big)v_{\ell,\ell\overline{\lambda}_{q}}&\\& 
=\Big(X_{\beta_1}^-\otimes t^{\overline{\lambda}_q(H_{\beta_1})}+X_{\beta_2}^-\otimes t^{\overline{\lambda}_q(H_{\beta_2})}\Big)v_{\ell,\ell\overline{\lambda}_{q}}\end{flalign}

Write $\lambda(\alpha^{\vee})$ and $\lambda_{0}(\alpha^{\vee})$ as in \eqref{write} and set $r=s_{\alpha}$ and $c_{0},\dots,c_{d-1}=0$. 
We can transform \eqref{diezweite} again into a system of linear equations ($(r-d)$ equations with $(r-d)$ unknowns), where the square matrix $A$ is given as in \eqref{m2} with $r_q=\overline{\lambda}_q(H_{\beta_1}), r_{p+q}=\overline{\lambda}_q(H_{\beta_2})$. Since $A$ is invertible by Lemma~\ref{invmat} we obtain the desired property.
Now suppose $m_{\alpha}<l$, and set $r=s_{\alpha}-1$ and $c_0,\dots,c_{d-2}=0$. Then we can transform \eqref{wieesop} again into a system of linear equations ($(r-d+1)$ equations with $(r-d+1)$ unknowns), where the corresponding square matrix is of similar form.
Hence \eqref{diezweite} holds and since 
$$\big[x_{-\frac{\alpha}{2}+(d-\frac{1}{2})\delta},x_{-\frac{\alpha}{2}+(k+\frac{1}{2})\delta}\big]v_{\ell,\lambda_{0}}=0\mbox{ for all $k\geq d-1$}$$
we can repeat this argument and obtain
$$x_{-\frac{\alpha}{2}+(s_{\alpha}-\frac{1}{2})\delta}^{(2m_{\alpha}-\ell)_{+}+1}\mathbf v=c_{d-1}^{(2m_{\alpha}-\ell)_{+}+1}\big(v_{\ell,\ell\overline{\lambda}_{1}}*\cdots*v_{\ell,\ell\overline{\lambda}_{p}}*x_{-\frac{\alpha}{2}+(d-\frac{1}{2})\delta}^{(2m_{\alpha}-\ell)_{+}+1}v_{\ell,\lambda_{0}}\big)=0$$
$$\mbox{ resp. }x_{-\alpha+(s_{\alpha}-1)\delta}^{m_{\alpha}+1}\mathbf v=c_{d-1}^{m_{\alpha}+1}\big(v_{\ell,\ell\overline{\lambda}_{1}}*\cdots*v_{\ell,\ell\overline{\lambda}_{p}}*x_{-\alpha+(d-1)\delta}^{m_{\alpha}+1}v_{\ell,\lambda_{0}}\big)=0$$
Thus the theorem is proven for the hyperspecial twisted current algebra.

We continue the proof for the remaining twisted current algebras. Let $r\in \bz_+, \alpha\in R_s^+$ and $\overline{\alpha}\in R^{+}_{\overline{\lie g}}$ such that $\overline{\alpha}\vert \lie h=\alpha$. Then,
\begin{align*} x_{-\alpha+r\delta}\mathbf v=&=\Big(X^{-}_{\alpha,r}\otimes t^{ r}\Big)\mathbf v=\Big(X^{-}_{\alpha,r} \otimes t^{r}-\sum^{r-1}_{k=0} c_kX^{-}_{\alpha,k}\otimes t^{k}\Big)\mathbf v&\\&
=\sum^p_{q=1}v_{\ell,\ell\overline{\lambda}_{1}}*\cdots*\Big(X^{-}_{\alpha,r}\otimes (t+z_q)^{r}-\sum^{r-1}_{k=0} c_kX^{-}_{\alpha,k}\otimes(t+z_q)^{k}\Big)v_{\ell,\ell\overline{\lambda}_{q}}*\cdots*v_{\ell,\ell\overline{\lambda}_{p}}
&\\&+ \Big(v_{\ell,\ell\overline{\lambda}_{1}}*\cdots*v_{\ell,\ell\overline{\lambda}_{p}}*\big(x_{-\alpha+r\delta}-\sum_{k=0}^{r-1}c_kx_{-\alpha+k\delta}\big)v_{\ell,\lambda_{0}}\Big)\end{align*}
Again we shall make a particular choice of the coefficients $c_0,\dots,c_{r-1}$ such that for all $1\leq q\leq p$
\begin{equation}\label{hwis}\Big(X^{-}_{\alpha,r}\otimes (t+z_q)^{r}-\sum^{r-1}_{k=0} c_kX^{-}_{\alpha,k}\otimes(t+z_q)^{k}\big)v_{\ell,\ell\overline{\lambda}_{q}}=\Big(\sum^{m-1}_{i=0}(\xi^i)^rX^{-}_{\sigma^i(\overline{\alpha})}\otimes t^{\overline{\lambda}_q(H_{\sigma^i(\overline{\alpha})})}\Big)v_{\ell,\ell\overline{\lambda}_{q}}\end{equation}
Write once more $\lambda(\alpha^{\vee})$ and $\lambda_{0}(\alpha^{\vee})$ as in \eqref{write} and set $r=s_{\alpha}$ and $c_{0},\dots,c_{d-1}=0$.
We can transform \eqref{hwis} once again into a system of linear equations, where the square matrix $A=(a_{i,j}(d))$ is given as in \eqref{m2}. With Lemma~\ref{invmat} we obtain the desired property.
Now suppose $m_{\alpha}<l$. Again setting $r=s_{\alpha}-1$ and $c_{0},\dots,c_{d-2}=0$ we obtain once more with Lemma~\ref{invmat} that 
$$ x_{-\alpha+r\delta}\mathbf v=\Big(v_{\ell,\ell\overline{\lambda}_{1}}*\cdots*v_{\ell,\ell\overline{\lambda}_{p}}*\big(x_{-\alpha+r\delta}-\sum_{k=0}^{r-1}c_kx_{-\alpha+k\delta}\big)v_{\ell,\lambda_{0}}\Big)$$
Therefore,
$$x_{-\alpha+(s_{\alpha}-1)\delta}^{m_{\alpha}+1}\mathbf v=c_{d-1}^{m_{\alpha}+1}\big(v_{\ell,\ell\overline{\lambda}_{1}}*\cdots*v_{\ell,\ell\overline{\lambda}_{p}}*x_{-\alpha+(d-1)\delta}^{m_{\alpha}+1}v_{\ell,\lambda_{0}}\big)=0.$$
The proof 
for all $\alpha\in R^+_{\ell}$ is identical and we omit the details.
\end{pf}
\end{thm}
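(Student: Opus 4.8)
The plan is to reduce the claimed isomorphism to a single surjectivity statement and then upgrade it by a dimension count. First I would note that by Proposition~\ref{cmf} the right-hand side is a cyclic $\CG$--module generated by $\mathbf v=v_{\ell,\ell\overline{\lambda}_1}*\cdots*v_{\ell,\ell\overline{\lambda}_p}*v_{\ell,\lambda_0}$, and that its dimension is already determined: the fusion product, being an associated graded, has the same dimension as the underlying tensor product, and Theorem~\ref{tensordec} together with Lemma~\ref{lemwe} identifies this product of dimensions with $\dim\D(\ell,\lambda)$. Hence it suffices to produce a surjection of $\CG$--modules from $\D(\ell,\lambda)$ onto $\D_{\overline{\lie g}}(\ell,\ell\overline{\lambda}_1)*\cdots*\D(\ell,\lambda_0)$.

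Since the right-hand side is visibly a graded quotient of $W_{\loc}(\lambda)$, by the simplified presentations of twisted Demazure modules in Theorem~\ref{dem2} (hyperspecial case) and Theorem~\ref{dem3} (special case) such a surjection exists as soon as the generator $\mathbf v$ satisfies the finitely many relations \eqref{finer21}--\eqref{finer22}, respectively \eqref{finer23}. The principle I would use repeatedly is the following consequence of the associated-graded construction: if $x\in\CG[d]$ and $(x-y)\mathbf v=0$ in the tensor product $\Psi^{*}_{\boz_1,N}\D_{\overline{\lie g}}(\ell,\ell\overline{\lambda}_1)\otimes\cdots\otimes\D(\ell,\lambda_0)$ for some $y\in\bigoplus_{s<d}\CG[s]$, then $x$ annihilates the image of $\mathbf v$ in the fusion product, because $x\mathbf v=y\mathbf v$ then lies in filtration level $d-1$ and so maps to zero in degree $d$ of $\gr$. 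Thus for each target relation $x_{-\alpha+d_\alpha s_\alpha\delta}\mathbf v=0$ and its higher-power variant it is enough to find lower-degree correction terms $\sum_{k<s_\alpha}c_k x_{-\alpha+d_\alpha k\delta}$ annihilating the generator in the tensor product.

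Concretely, I would expand $\bigl(x_{-\alpha+d_\alpha r\delta}-\sum_{k}c_k x_{-\alpha+d_\alpha k\delta}\bigr)\mathbf v$ by the Leibniz rule over the tensor factors, using the explicit embedding $\CG\hookrightarrow\mathcal{L}(\overline{\lie g})$ from Section~\ref{ev0} (resp. Section~\ref{section4}) and the reparametrisation $\varphi_{z_q}\colon t\mapsto t+z_q$ attached to the factor $\D_{\overline{\lie g}}(\ell,\ell\overline{\lambda}_q)$. Writing $\lambda(\alpha^\vee)=(s_\alpha-1)\ell+m_\alpha$ and $\lambda_0(\alpha^\vee)=(d-1)\ell+m_\alpha$ with $0<m_\alpha\le\ell$, I would take $r=s_\alpha$ and choose the $c_k$ so that on each of the first $p$ factors the corrected operator reduces to the single mode $X^-\otimes t^{\lambda_q(\alpha^\vee)}$; since $\ell\overline{\lambda}_q$ is rectangular this mode already annihilates $v_{\ell,\ell\overline{\lambda}_q}$, so those contributions drop out and only the factor $\D(\ell,\lambda_0)$ survives, where the relation holds by construction. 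The matching condition on each factor is a square linear system in the $c_k$ whose coefficient matrix is precisely \eqref{m1} for the long roots of $\tt A_{2n}^{(2)}$ and \eqref{m2} in the remaining cases; these are invertible by Lemma~\ref{invmat}, so the $c_k$ exist. Taking instead $r=s_\alpha-1$ and iterating the procedure yields the higher-power relations, the surviving term being killed by the Demazure relations of $\D(\ell,\lambda_0)$.

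The main obstacle I anticipate is careful bookkeeping rather than a conceptual gap: one must run this argument separately for short roots, for the long roots producing the half-integral modes $x_{-\frac{\alpha}{2}+(k+\frac12)\delta}$ in the hyperspecial $\tt A_{2n}^{(2)}$ case, and for the shifted exponent $j=r-1$ occurring for the roots $\alpha=\sum_{q=i}^{j}\alpha_q+2\sum_{q=j+1}^{n-1}\alpha_q+\alpha_n$, checking in each instance that the coefficient matrix still has the form \eqref{m1} or \eqref{m2}. The special twisted current algebras are handled identically but more easily, since only $\mathfrak{sl}_2$--type modes occur. Finally, independence of the fusion product from the choice of $\mathbf z$ follows a posteriori, as the isomorphism type is fixed once the surjection and the dimension count are in place; the $\tt E_6^{(2)}$ hypothesis $\ell\Lambda_0-w_0\lambda_0\in\widehat P^+$ enters only through the use of Theorem~\ref{tensordec}.
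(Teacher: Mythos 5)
Your proposal is correct and takes essentially the same route as the paper's own proof: the identical reduction to producing a surjection from $\D(\ell,\lambda)$ plus the dimension count via Theorem~\ref{tensordec} and Lemma~\ref{lemwe}, followed by the same correction-by-lower-degree-modes argument in which the matching conditions become the linear systems \eqref{m1} and \eqref{m2}, solved by Lemma~\ref{invmat}, with the same case analysis for short roots, half-integral modes, and the shifted exponent. The only presentational difference is that you state the associated-graded principle (lower filtration degree implies vanishing in the top graded piece) explicitly, which the paper uses implicitly.
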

\begin{cor}
Let $\lambda\in P^+$ such that $\ell\Lambda_0+\lambda\in \widehat{P}^+$. Then we have an isomorphism of $\CG$--modules $$\D(\ell,N\ell\theta+\lambda)\cong_{\CG} \D_{\overline{\lie g}}(\ell,\ell\theta)*\cdots *\D_{\overline{\lie g}}(\ell,\ell\theta)*\ev_0^{*}V(\lambda)$$ 
\end{cor}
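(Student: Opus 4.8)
The plan is to read the corollary off from Theorem~\ref{mainthmsection6} by a suitable specialization and then to identify the remaining fusion factor with an evaluation module. First I would apply Theorem~\ref{mainthmsection6} with $p=N$, $\lambda_1=\cdots=\lambda_N=\theta$ and $\lambda_0=\lambda$, so that $\ell\big(\sum_{i=1}^{N}\theta\big)+\lambda=N\ell\theta+\lambda$. The highest root $\theta$ lies in $P^+$, and by the discussion preceding the theorem the natural extension of a dominant $\lie g$--weight is $\overline{\lie g}$--dominant, so $\theta\in\theta^\sigma$; hence I may take every $\overline{\lambda}_i=\theta$ and obtain the factors $\D_{\overline{\lie g}}(\ell,\ell\theta)$. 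In type $\tt E^{(2)}_6$ the underlying $\lie g$ is of type $\tt F_4$, where $w_0=-\id$, so the extra hypothesis $\ell\Lambda_0-w_0\lambda_0\in\widehat P^+$ of the theorem is precisely $\ell\Lambda_0+\lambda\in\widehat P^+$, which is what we assume. The theorem then gives
$$\D(\ell,N\ell\theta+\lambda)\cong_{\CG}\D_{\overline{\lie g}}(\ell,\ell\theta)*\cdots*\D_{\overline{\lie g}}(\ell,\ell\theta)*\D(\ell,\lambda)$$
with $N$ copies of $\D_{\overline{\lie g}}(\ell,\ell\theta)$.

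It now suffices to prove $\D(\ell,\lambda)\cong_{\CG}\ev_0^*V(\lambda)$, since the fusion product depends only on the $\CG$--module structure of its factors and this isomorphism can then be substituted into the last factor. For this I would invoke the presentation of $\D(\ell,\lambda)$ from Theorem~\ref{demq} together with the exponents in \eqref{gleich}--\eqref{gleich2}. The crucial step is to translate the hypothesis $\ell\Lambda_0+\lambda\in\widehat P^+$: its only nontrivial instance is $(\ell\Lambda_0+\lambda)(\alpha_0^\vee)\ge 0$, and using $\alpha_0^\vee=K-\theta^\vee$ in general (respectively $K-2\theta^\vee$ in type $\tt A^{(2)}_{2n}$) one checks that the coroot subtracted from $K$ dominates every positive coroot of $R$. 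Consequently $\lambda(\alpha^\vee)\le\ell$ for all $\alpha\in R^+$, and moreover $2\lambda(\alpha^\vee)\le\ell$ for all $\alpha\in R^+_\ell$ in the hyperspecial case.

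Granting these inequalities, for every $r\ge 1$ the exponent $k_{-\alpha+d_\alpha r\delta}=\max\{0,\lambda(\alpha^\vee)-\ell r\}$ vanishes, and for every $r\ge 0$ the exponent $k_{-\frac{\alpha}{2}+(r+\frac12)\delta}=\max\{0,2\lambda(\alpha^\vee)-\ell(2r+1)\}$ vanishes; thus all positive--degree root vectors of $\CN^-$ kill the cyclic generator $v_{\ell,\lambda}$. Together with $\CH_+v_{\ell,\lambda}=0$ and $\CN^+v_{\ell,\lambda}=0$ from Theorem~\ref{demq}, this gives $\CG_+v_{\ell,\lambda}=0$. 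A short induction on the $\bu(\lie g)$--filtration, using that $\CG_+$ is an ideal with $\CG[0]=\lie g$, then shows that $\bu(\lie g)v_{\ell,\lambda}$ is already a $\CG$--submodule, whence $\D(\ell,\lambda)=\bu(\CG)v_{\ell,\lambda}=\bu(\lie g)v_{\ell,\lambda}\cong V(\lambda)$ is concentrated in degree zero, i.e.\ $\D(\ell,\lambda)\cong_{\CG}\ev_0^*V(\lambda)$. Substituting this into the decomposition above finishes the proof.

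The main obstacle I anticipate is the middle paragraph: making the passage from $\ell\Lambda_0+\lambda\in\widehat P^+$ to $\lambda(\alpha^\vee)\le\ell$ (and $2\lambda(\alpha^\vee)\le\ell$ for long roots) uniform across the twisted families, where the factor $2$ and the fact that $2\theta^\vee$ rather than $\theta^\vee$ is the dominating coroot in type $\tt A^{(2)}_{2n}$ require care.
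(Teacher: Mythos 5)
Your proposal is correct and is essentially the proof the paper intends (the corollary is stated without proof as an immediate consequence of Theorem~\ref{mainthmsection6}): you specialize the theorem to $\lambda_1=\cdots=\lambda_N=\theta$, $\lambda_0=\lambda$, noting that the $\tt E^{(2)}_6$ hypothesis $\ell\Lambda_0-w_0\lambda_0\in\widehat{P}^+$ reduces to $\ell\Lambda_0+\lambda\in\widehat{P}^+$ because $w_0=-\id$ for $\tt F_4$, and then identify the last factor $\D(\ell,\lambda)\cong_{\CG}\ev_0^{*}V(\lambda)$ from the presentation in Theorem~\ref{demq}, since dominance of $\ell\Lambda_0+\lambda$ forces all exponents in \eqref{gleich}--\eqref{gleich2} attached to positive--degree root vectors to vanish. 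Both the translation of dominance into the inequalities $\lambda(\alpha^\vee)\le\ell$ (and $2\lambda(\alpha^\vee)\le\ell$ for $\alpha\in R^+_\ell$ in the hyperspecial case) and the filtration argument showing $\D(\ell,\lambda)=\bu(\lie g)v_{\ell,\lambda}$ is concentrated in degree zero are sound.
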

We have two remarks.
\begin{rem}\label{danach}
Let $\ell,k\in \mathbb{N}$,\ $\ell k=\ell_1+\cdots+\ell_k$ and $\overline{\lambda}\in \lambda^{\sigma}$.
The same arguments of the proof of Theorem~\ref{mainthmsection6} show that we have a surjective map of $\CG$--modules
$$\D(\ell,\ell k\lambda)\to \D_{\overline{\lie g}}(\ell_1,\ell_1\overline{\lambda})*\cdots* \D_{\overline{\lie g}}(\ell_k,\ell_k\overline{\lambda})\to 0$$
\end{rem}
\begin{rem}
For untwisted modules, the fusion product can be understood as a graded version of the tensor product. From the construction it is well known that the fusion product considered as a module for the underlying simple Lie algebra is isomorphic to the tensor product. Here we have the same circumstances, namely that
$$\D_{\overline{\lie g}}(\ell,\ell\overline{\lambda}_1)*\cdots* \D_{\overline{\lie g}}(\ell,\ell\overline{\lambda}_{p})*\D(\ell,\lambda_{0})\cong _{\lie g}\D(\ell,\ell\lambda_1)\otimes\cdots\otimes \D(\ell,\ell\lambda_{p})\otimes \D(\ell,\lambda_{0})$$
which justifies to call this the fusion product. 
\end{rem}

\subsection{}
For the rest of this section we discuss applications of our result. We begin by noting the following corollary which gives a criterion whether two (non-isomorphic) $\overline{\lie g}[t]$--Demazure modules of same level became isomorphic as $\CG$--modules. 
\begin{prop}
The fusion product of Demazure modules is independent of the choice of the parameters and 
$$\gr \D_{\overline{\lie g}}(\ell, \ell \overline{\lambda}_1)\cong_{\CG} \gr \D_{\overline{\lie g}}(\ell, \ell \overline{\lambda}_2) \mbox{ iff\ \ $\exists \ \lambda\in P^+$ such that } \overline{\lambda}_1,\overline{\lambda}_2\in \lambda^{\sigma}.$$\hfill\qedsymbol
\end{prop}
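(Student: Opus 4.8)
The plan is to read both assertions off from Theorem~\ref{mainthmsection6} specialized to a single tensor factor. First I would note that $\gr\D_{\overline{\lie g}}(\ell,\ell\overline{\lambda})$ is precisely the one--factor fusion product obtained by taking $p=1$, $\lambda_1=\lambda:=\overline{\lambda}\vert\lie h$ and $\lambda_0=0$ in the definition preceding Theorem~\ref{mainthmsection6}: since $\D(\ell,0)$ is the trivial one--dimensional $\CG$--module, tensoring with it changes nothing, so $\D_{\overline{\lie g}}(\ell,\ell\overline{\lambda})*\D(\ell,0)=\gr\Psi^*_{z,N}\D_{\overline{\lie g}}(\ell,\ell\overline{\lambda})$. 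The hypothesis required in type $\tt E^{(2)}_6$ reduces to $\ell\Lambda_0\in\widehat P^+$, which is automatic. Theorem~\ref{mainthmsection6} then gives $\gr\D_{\overline{\lie g}}(\ell,\ell\overline{\lambda})\cong_{\CG}\D(\ell,\ell\lambda)$, and since the right--hand side involves neither $z$ nor $N$, this already establishes independence of the parameters; more generally Theorem~\ref{mainthmsection6} identifies the full product $\D_{\overline{\lie g}}(\ell,\ell\overline{\lambda}_1)*\cdots*\D_{\overline{\lie g}}(\ell,\ell\overline{\lambda}_p)*\D(\ell,\lambda_0)$ with $\D(\ell,\lambda)$, where $\lambda=\ell(\sum_i\lambda_i)+\lambda_0$, so it too is independent of $\mathbf z$.

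Writing $F(\overline{\lambda}):=\gr\D_{\overline{\lie g}}(\ell,\ell\overline{\lambda})\cong_{\CG}\D(\ell,\ell\,\overline{\lambda}\vert\lie h)$, the equivalence follows quickly. For the implication $(\Leftarrow)$, if $\overline{\lambda}_1,\overline{\lambda}_2\in\lambda^{\sigma}$ then both restrict to the same $\lambda$, whence $F(\overline{\lambda}_1)\cong_{\CG}\D(\ell,\ell\lambda)\cong_{\CG}F(\overline{\lambda}_2)$. For $(\Rightarrow)$, I would restrict a given $\CG$--isomorphism $F(\overline{\lambda}_1)\cong F(\overline{\lambda}_2)$ to the degree--zero subalgebra $\CG[0]=\lie g$. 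By Theorem~\ref{demq} the generator $v_{\ell,\ell\lambda_i}$ of $\D(\ell,\ell\lambda_i)$ satisfies $\lie n^+v_{\ell,\ell\lambda_i}=0$ (as $\lie n^+\subseteq\CN^+$) and $h\,v_{\ell,\ell\lambda_i}=\ell\lambda_i(h)\,v_{\ell,\ell\lambda_i}$, while the module equals $\bu(\CN^-)v_{\ell,\ell\lambda_i}$ with $\CN^-$ strictly lowering the $\lie h$--weight; hence the maximal $\lie h$--weight of $F(\overline{\lambda}_i)$ is $\ell\lambda_i$, where $\lambda_i=\overline{\lambda}_i\vert\lie h$. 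A $\lie g$--module isomorphism preserves the set of weights, so $\ell\lambda_1=\ell\lambda_2$, i.e.\ $\lambda_1=\lambda_2=:\lambda$, and therefore $\overline{\lambda}_1,\overline{\lambda}_2\in\lambda^{\sigma}$, as required.

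Since the statement is a formal consequence of Theorem~\ref{mainthmsection6}, I do not expect a genuine obstacle. The only point needing a separate (if elementary) argument is the forward implication, where one must recover the finite weight $\lambda$ from the abstract $\CG$--isomorphism class; this is handled by the highest--weight observation above, which uses only the defining relations of $\D(\ell,\ell\lambda_i)$ recalled in Theorem~\ref{demq}.
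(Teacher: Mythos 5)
Your proof is correct and takes essentially the same route as the paper, which states this proposition as an immediate corollary of Theorem~\ref{mainthmsection6}: specializing to $p=1$, $\lambda_0=0$ (so that $\D(\ell,\lambda_0)$ is trivial) identifies $\gr \D_{\overline{\lie g}}(\ell,\ell\overline{\lambda})$ with $\D(\ell,\ell\,\overline{\lambda}\vert\lie h)$, which yields both the parameter independence and the stated equivalence. Your added detail for the forward implication --- recovering $\lambda$ as the unique maximal $\lie h$--weight of $\D(\ell,\ell\lambda_i)$ via the defining relations in Theorem~\ref{demq} --- is precisely the elementary observation the paper leaves implicit.
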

Another point of view of the previous proposition is the following:
The twisted Demazure module $\D(\ell,\ell \lambda)$ can be obtained by taking the associated graded module of the untwisted Demazure module $\D_{\overline{\lie g}}(\ell, \ell \overline{\lambda})$ for any $\overline{\lambda}\in \lambda^{\sigma}$. As a consequence we obtain together with \cite{FoL06} certain branching rules. For fixed $i$ we set $\epsilon\in\{0,1\}$ such that $\epsilon\equiv i \mod 2$.
\begin{cor}
For $1\leq i\leq n$ we have
\begin{align*}
&V_{\mathfrak{sl}_{2n+1}}(\ell\omega_i)&&\cong_{\mathfrak{sp}_{2n}}V_{\mathfrak{sl}_{2n+1}}(\ell\omega_{2n+1-i})&&\cong_{\mathfrak{sp}_{2n}}\bigoplus_{s_1+\cdots+s_i\leq \ell}V(s_1\omega_1+\cdots+s_i\omega_i)\\
&V_{\mathfrak{sl}_{2n}}(\ell\omega_i)&&\cong_{\mathfrak{sp}_{2n}}V_{\mathfrak{sl}_{2n}}(\ell\omega_{2n-i})&&\cong_{\mathfrak{sp}_{2n}}\bigoplus_{s_\epsilon+\cdots+s_i=\ell}V(s_{\epsilon}\omega_{\epsilon}+s_{\epsilon+2}\omega_{\epsilon+2}+\cdots+s_i\omega_i)\\
&V_{\mathfrak{so}_{2(n+1)}}(\ell\omega_1)&&\cong_{\mathfrak{so}_{2n+1}}\bigoplus_{s\leq \ell}V(s\omega_1)\\
&V_{\mathfrak{so}_{2(n+1)}}(\ell\omega_n)&&\cong_{\mathfrak{so}_{2n+1}}V_{\mathfrak{so}_{2(n+1)}}(\ell\omega_{n+1})&&\cong_{\mathfrak{so}_{2n+1}}V(\ell\omega_n)\\
&V_{E_6}(\ell \omega_1)&&\cong_{F_4}V_{E_6}(\ell \omega_6)&&\cong_{F_4}\bigoplus_{s\leq\ell}V(s\omega_1)\end{align*}
\end{cor}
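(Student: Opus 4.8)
The plan is to read off each branching rule from two ingredients: the identification of the twisted Demazure module $\D(\ell,\ell\lambda)$ with the associated graded of an untwisted Demazure module provided by the preceding proposition (equivalently, the case $p=1$, $\lambda_{0}=0$ of Theorem~\ref{mainthmsection6}), and the classical decompositions of Demazure modules computed in \cite{FoL06}. The first, purely formal, point I would record is that forming the associated graded module with respect to the $d$--grading does not alter the underlying $\CG[0]=\lie g$--module structure: the filtration $V^{0}\subset V^{1}\subset\cdots$ defining $\gr V$ consists of $\lie g$--submodules, and since $\lie g$ is semisimple and $V$ finite--dimensional this filtration splits, so $V\cong_{\lie g}\gr V$. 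Applying this to $V=\D_{\overline{\lie g}}(\ell,\ell\overline{\lambda})$, regarded as a $\CG$--module through the realization of Section~\ref{ev0} or Section~\ref{section4}, and using the preceding proposition yields
$$\D(\ell,\ell\lambda)\cong_{\lie g}\D_{\overline{\lie g}}(\ell,\ell\overline{\lambda})\big|_{\lie g}\qquad\text{for every }\overline{\lambda}\in\lambda^{\sigma}.$$

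The second step is to match the left--hand sides with genuine irreducible $\overline{\lie g}$--modules. In each line the relevant dominant weight of $\overline{\lie g}$ --- namely $\omega_{i}$ in types $\tt A_{2n}$ and $\tt A_{2n-1}$, the vector and half--spin weights $\omega_{1},\omega_{n},\omega_{n+1}$ in type $\tt D_{n+1}$, and $\omega_{1},\omega_{6}$ in type $\tt E_{6}$ --- is a minuscule fundamental weight, so that $\ell\omega_{i}(\alpha^{\vee})\le\ell$ for all $\alpha\in R^{+}_{\overline{\lie g}}$. I would then show that for such a weight the untwisted $\overline{\lie g}$--stable Demazure module is concentrated in degree zero, namely $\D_{\overline{\lie g}}(\ell,\ell\omega_{i})\cong\ev_{0}^{*}V_{\overline{\lie g}}(\ell\omega_{i})$. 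This is immediate from the untwisted analogue of Theorem~\ref{demq} (Mathieu's presentation, \cite{M88}): the exponent attached to the real root $-\alpha+\delta$ is $k_{-\alpha+\delta}=\max\{0,\ell\omega_{i}(\alpha^{\vee})-\ell\}=0$, while the positive affine real roots $\alpha+\delta$ and the degree--one Cartan directions annihilate the cyclic vector $v$ by the raising relations; hence all of $\overline{\lie g}\otimes t$ kills $v$, and because $[\overline{\lie g},\overline{\lie g}]=\overline{\lie g}$ this forces $(\overline{\lie g}\otimes t\bc[t])v=0$, so the module collapses to $\ev_{0}^{*}V_{\overline{\lie g}}(\ell\omega_{i})$. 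In particular its restriction to $\lie g$ equals the restriction of the irreducible $V_{\overline{\lie g}}(\ell\omega_{i})$ appearing on the left.

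Combining the two steps gives, for $\lambda=\omega_{i}\vert_{\lie h}$,
$$V_{\overline{\lie g}}(\ell\omega_{i})\big|_{\lie g}\cong_{\lie g}\D_{\overline{\lie g}}(\ell,\ell\omega_{i})\big|_{\lie g}\cong_{\lie g}\D(\ell,\ell\lambda).$$
The inner isomorphism in each line, e.g. $V_{\mathfrak{sl}_{2n+1}}(\ell\omega_{i})\cong_{\mathfrak{sp}_{2n}}V_{\mathfrak{sl}_{2n+1}}(\ell\omega_{2n+1-i})$, then follows because the two fundamental weights restrict to the same $\lambda$ and hence both lie in $\lambda^{\sigma}$, so the preceding proposition gives $\gr\D_{\overline{\lie g}}(\ell,\ell\omega_{i})\cong_{\CG}\gr\D_{\overline{\lie g}}(\ell,\ell\omega_{2n+1-i})$. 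Finally, the explicit multiplicity--free sums on the right are exactly the classical decompositions of the twisted Demazure modules $\D(\ell,\ell\lambda)$ recorded in \cite{FoL06}. The only non--formal inputs are thus this decomposition and the minuscule identification of the second step; I expect the main obstacle to be the case--by--case bookkeeping needed to match \cite{FoL06}'s combinatorial description of $\D(\ell,\ell\lambda)$ with the stated sums, together with checking --- in the hyperspecial type $\tt A^{(2)}_{2n}$ case, where $\lie g=\mathfrak{sp}_{2n}$ sits inside $\mathfrak{sl}_{2n+1}$ in the nonstandard way of Section~\ref{ev0} --- that the $\lie g$--structure obtained from $\CG[0]$ is indeed the intended $\mathfrak{sp}_{2n}\subset\mathfrak{sl}_{2n+1}$ branching.
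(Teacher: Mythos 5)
Your proposal is correct and follows essentially the same route as the paper: the paper obtains the corollary directly from the preceding proposition (twisted Demazure modules as associated graded of untwisted Demazure modules) combined with the classical decompositions recorded in \cite{FoL06}, which are exactly the two ingredients you use. The extra details you supply --- the semisimplicity argument showing $\gr$ does not change the $\lie g$--module structure, and the identification $\D_{\overline{\lie g}}(\ell,\ell\omega_i)\cong \ev_0^*V_{\overline{\lie g}}(\ell\omega_i)$ for minuscule $\omega_i$ --- are precisely what the paper leaves implicit, so your write-up is a faithful (and more detailed) version of the paper's own argument.
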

Even in the untwisted case very little is known about the fusion product of two finite--dimensional irreducible modules. We use our main theorem to cover certain cases for the twisted algebras. We set $\epsilon=1$ if $\CG$ is special and $\epsilon=2$ otherwise.
\begin{cor}
Fix $1\leq i\leq n$ such that $\omega_i$ is a minuscule $\overline{\lie g}$--weight and $\lambda\in P^+$ such that $\epsilon\lambda(\theta^{\vee})\leq \ell$. Then the module $V_{\overline{\lie g}}(\ell \omega_i)^{*k}*\ev^{*}_0V(\lambda)$ is the quotient of $W(\ell k\omega_i+\lambda)$ by the submodule generated by the elements

$$\big\{ (x_{-\alpha+(k\omega_i(\alpha^{\vee})+1)\delta})w_{\ell k\omega_i+\lambda}
:\alpha\in R^+\big\}
\bigcup
\big\{(x_{-\alpha+(k\omega_i(\alpha^{\vee}))\delta})^{\lambda(\alpha^{\vee})+1}w_{\ell k\omega_i+\lambda}:\alpha\in R^+\big\}
$$$$
\bigcup\big\{ (x_{-\frac{\alpha}{2}+(k\omega_i(\alpha^{\vee})+\frac{1}{2})\delta})w_{\ell k\omega_i+\lambda}
:\alpha\in R_{\ell}^+,\ \CG \mbox{ hyperspecial}\big\}$$
\end{cor}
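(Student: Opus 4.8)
The plan is to realise the module in question as a single twisted Demazure module and then to read off its generators from the simplified presentation of \thmref{dem2}/\thmref{dem3}.

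First I would identify the two tensor factors with Demazure modules. Since $\omega_i$ is minuscule we have $\ell\omega_i(\beta^\vee)\le\ell$ for every $\beta\in R^+_{\overline{\lie g}}$, so the generator of $\D_{\overline{\lie g}}(\ell,\ell\omega_i)$ is annihilated by every positive--degree element and $\D_{\overline{\lie g}}(\ell,\ell\omega_i)\cong V_{\overline{\lie g}}(\ell\omega_i)$ as graded $\overline{\lie g}[t]$--modules concentrated in degree $0$. On the other side, write $\theta^\vee$ for the highest coroot, so that $\lambda(\theta^\vee)\ge\lambda(\alpha^\vee)$ for all $\alpha\in R^+$; the hypothesis $\epsilon\lambda(\theta^\vee)\le\ell$ then gives $\lambda(\alpha^\vee)\le\ell$ for every $\alpha\in R^+$ and, in the hyperspecial case, $2\lambda(\alpha^\vee)\le\ell$ for every $\alpha\in R^+_\ell$. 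Feeding $s_\alpha=1$ and $(2m_\alpha-\ell)_+=0$ into \thmref{dem2} (resp. \thmref{dem3}) collapses every positive--degree relation defining $\D(\ell,\lambda)$, so that $\D(\ell,\lambda)\cong\ev_0^*V(\lambda)$.

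Next I would invoke the fusion product decomposition. Applying \thmref{mainthmsection6} with $p=k$, $\lambda_1=\dots=\lambda_k=\omega_i|_{\lie h}$, $\lambda_0=\lambda$ and $\overline{\lambda}_j=\omega_i\in\lambda_j^\sigma$ produces the total weight $\ell k\omega_i+\lambda$; in type $\tt E^{(2)}_6$ the auxiliary condition $\ell\Lambda_0-w_0\lambda\in\widehat P^+$ required there follows from $\lambda(\theta^\vee)\le\ell$, because $-w_0$ fixes $\theta$ and $\alpha_0^\vee=K-\theta^\vee$. Combined with the two identifications of the previous paragraph this gives an isomorphism of $\CG$--modules $V_{\overline{\lie g}}(\ell\omega_i)^{*k}*\ev_0^*V(\lambda)\cong\D(\ell,\ell k\omega_i+\lambda)$.

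It then remains to match the simplified presentation of $\Lambda:=\ell k\omega_i+\lambda$ with the stated generators. Here $\Lambda(\alpha^\vee)=\ell\,k\omega_i(\alpha^\vee)+\lambda(\alpha^\vee)$, and because $\omega_i$ is minuscule the partition $\xi^\alpha(\ell,\Lambda)$ consists of $k\omega_i(\alpha^\vee)$ parts equal to $\ell$ followed by the single part $\lambda(\alpha^\vee)$; this is a special fat hook with $s_\alpha=k\omega_i(\alpha^\vee)+1$ and $m_\alpha=\lambda(\alpha^\vee)$ when $\lambda(\alpha^\vee)>0$, and a rectangle otherwise. For a short root $\alpha$ one has $d_\alpha=1$, and the two families of \eqref{finer21} become exactly $x_{-\alpha+(k\omega_i(\alpha^\vee)+1)\delta}w=0$ and $(x_{-\alpha+k\omega_i(\alpha^\vee)\delta})^{\lambda(\alpha^\vee)+1}w=0$, the bound $\lambda(\alpha^\vee)\le\ell$ ensuring $m_\alpha<\ell$ so that the second relation genuinely occurs; via \eqref{finer23} this already settles the special twisted case.

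The main obstacle is the long--root bookkeeping in the hyperspecial case, where $d_\alpha=2$ and the operative generators sit on the half--roots $\tfrac{\alpha}{2}+(r+\tfrac12)\delta$. Here I would use precisely the second family of \eqref{finer22}, whose exponent is $(2m_\alpha-\ell)_++1$: the hypothesis $2\lambda(\theta^\vee)\le\ell$ forces $2m_\alpha-\ell\le 0$, so this relation collapses to the single element $x_{-\frac{\alpha}{2}+(s_\alpha-\frac12)\delta}w=x_{-\frac{\alpha}{2}+(k\omega_i(\alpha^\vee)+\frac12)\delta}w=0$, which is the third family in the statement. The first family of \eqref{finer22}, namely $x_{-\frac{\alpha}{2}+(s_\alpha+\frac12)\delta}w=0$, is then a consequence, since annihilation at a lower $\delta$--grade forces it at all higher grades; likewise the long--root instances of \eqref{finer21} follow from the short--root and half--root relations through the bracket identities already used in \corref{a2corf}. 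The delicate point to check is that the integers $k\omega_i(\alpha^\vee)$ in the statement really are the $\CG$--grades produced by the minuscule pairings $\omega_i(\overline{\beta}^\vee)\in\{0,1\}$ under the realisation of $\CG$ inside $\mathcal{L}(\mathfrak{sl}_{2n+1})$; once this is verified the two presentations coincide and the isomorphism above becomes an identification of quotients of $W_{\loc}(\ell k\omega_i+\lambda)$.
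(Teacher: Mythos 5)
You follow the route the paper intends: the corollary is stated without proof precisely because it is meant to be immediate from \thmref{mainthmsection6} combined with \thmref{dem2}/\thmref{dem3}, and your three reductions --- $\D_{\overline{\lie g}}(\ell,\ell\omega_i)\cong \ev_0^*V_{\overline{\lie g}}(\ell\omega_i)$ for minuscule $\omega_i$, $\D(\ell,\lambda)\cong \ev_0^*V(\lambda)$ from $\epsilon\lambda(\theta^\vee)\le\ell$, and the application of \thmref{mainthmsection6} with $\lambda_1=\cdots=\lambda_k=\omega_i$, $\lambda_0=\lambda$ (including the type $\tt E^{(2)}_6$ condition, where $w_0=-\id$ for $F_4$) --- are correct and are exactly the intended ones. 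Your identification of $\theta^\vee$ with ``the highest coroot'' is accurate only in the special case; in the hyperspecial case $\theta$ is the highest (long) root of $C_n$ and $\theta^\vee$ is not the highest coroot, but the two inequalities you actually use, $\lambda(\alpha^\vee)\le\ell$ for all $\alpha\in R^+$ and $2\lambda(\alpha^\vee)\le\ell$ for $\alpha\in R^+_\ell$, do follow from $2\lambda(\theta^\vee)\le\ell$, so this is harmless.

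The genuine gap is in the final matching of generators at the \emph{long} roots, where $d_\alpha=m\in\{2,3\}$. For the special algebras, your claim that the short-root computation ``already settles the special twisted case'' skips these roots entirely: \eqref{finer23} produces the generators $x_{-\alpha+d_\alpha s_\alpha\delta}w$ and $(x_{-\alpha+d_\alpha(s_\alpha-1)\delta})^{m_\alpha+1}w$, carrying the factor $d_\alpha$ that is absent from the displayed relations. In the hyperspecial case, your proposed derivation of the long-root instances of \eqref{finer21} ``through the bracket identities already used in \corref{a2corf}'' works only for the first-power relations: with $u=x_{-\frac{\alpha}{2}+(s_\alpha-\frac{3}{2})\delta}$ and $v=x_{-\frac{\alpha}{2}+(s_\alpha-\frac{1}{2})\delta}$ one has $[u,v]\propto x_{-\alpha+2(s_\alpha-1)\delta}$, with $[u,v]$ commuting with both $u$ and $v$, and from $vw=0$ alone one only obtains $[u,v]^{p}w=\frac{(-1)^p}{p!}\,v^{p}u^{p}w$, which has no reason to vanish for $p=m_\alpha+1$. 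These power relations are genuinely independent generators for special fat hooks --- that is exactly why they are listed in \thmref{genmax2}/\thmref{genmax3} --- and \corref{a2corf} is proved only for rectangular tuples, so no bracketing argument can produce them. The resolution is notational rather than computational: in the corollary the $\delta$-shifts at long roots must be read with the normalisation used throughout \secref{section3} and \secref{section4}, i.e.\ for $\alpha$ long the displayed elements are the root vectors $x_{-\alpha+d_\alpha(k\omega_i(\alpha^\vee)+1)\delta}$ and $x_{-\alpha+d_\alpha k\omega_i(\alpha^\vee)\delta}$ (on the literal reading these exponents of $\delta$ need not even correspond to roots, and the quotient by the remaining relations would in general be strictly larger than the fusion product). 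With that reading, the stated generators are verbatim those of \thmref{dem2} (resp.\ \thmref{dem3}) for the partitions $\xi^\alpha=(\ell^{k\omega_i(\alpha^\vee)},\lambda(\alpha^\vee))$, and no derivation is needed.
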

\subsection{}
In what follows we give another application of our main theorem. Fix a non--zero dominant weight $\lambda$ of $\lie g$ and $\ell \in \mathbb{N}$ such that $\Lambda=\ell\Lambda_0+\lambda\in \widehat{P}^+$. 
We shall give a semi--infinite fusion product construction for the irreducible highest weight $\widehat{\lie g}$--module $\widehat{V}(\Lambda)$. 
The special case $\lambda=0$ was proved earlier in \cite{FoL07} and a generalization for untwisted affine Lie algebras was considered in \cite{V13}. Following is the statement of semi--infinite fusion product construction of irreducible representations for the twisted affine algebras:
\begin{thm}\label{semiinfconjecture} 
Let $u\neq0$ be a $\CG$--invariant vector of $\D(\ell, \ell\theta)$ and $\bold{V}^\infty_{\ell, \lambda}$ be the direct limit of $$\ev_0^{*}V(\lambda)\hookrightarrow \D_{\overline{\lie g}}(\ell, \ell\theta)*\ev_0^{*}V(\lambda)\hookrightarrow \D_{\overline{\lie g}}(\ell, \ell\theta)*\D_{\overline{\lie g}}(\ell, \ell\theta)*\ev_0^{*}V(\lambda)\hookrightarrow \cdots $$
where the inclusions are given by $v\mapsto u\otimes v.$ Then
$\widehat{V}(\Lambda) \ \text{and} \ \bold{V}^\infty_{\ell, \lambda}$
are isomorphic as $\CG$--modules.

\end{thm}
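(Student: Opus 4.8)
The plan is to realise each finite stage of the direct system as a $\CG$--stable twisted Demazure submodule of $\widehat{V}(\Lambda)$, to identify the structure maps $v\mapsto u\otimes v$ with the inclusions of a cofinal chain of such Demazure modules, and to invoke the exhaustion $\widehat{V}(\Lambda)=\bigcup_N\widehat{V}_{w_N}(\Lambda)$; this mirrors the untwisted arguments of \cite{FoL07,V13}, the new ingredient being the fusion product decomposition of Theorem~\ref{mainthmsection6}. Concretely I would set, for $N\geq 0$,
$$\bold V^{(N)}=\D_{\overline{\lie g}}(\ell,\ell\theta)^{* N}*\ev_0^{*}V(\lambda),$$
the $N$--th term of the system, and apply the corollary to Theorem~\ref{mainthmsection6} (legitimate since $\Lambda=\ell\Lambda_0+\lambda\in\widehat P^+$) to obtain an isomorphism of $\CG$--modules $\bold V^{(N)}\cong_{\CG}\D(\ell,N\ell\theta+\lambda)$. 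As $N\ell\theta+\lambda\in P^+$, the target is a $\CG$--stable Demazure module $\widehat V_{w_N}(\Lambda)$ with $w_N(\Lambda)=\ell\Lambda_0-\lambda-N\ell\theta\pmod{\bc\delta}$. I would stress that every finite type occurring here ($C_n,B_n,F_4,G_2$) satisfies $w_0=-\id$, so $-w_0\lambda=\lambda$ and the weight $\ell\Lambda_0-\lambda-N\ell\theta$ really lies in the $\widehat W$--orbit of $\Lambda$; hence all the $\bold V^{(N)}$ are Demazure submodules of one and the same module $\widehat V(\Lambda)$, and choosing $w_N$ through the translation $t_{N\theta}$ makes the chain increasing, $\widehat V_{w_N}(\Lambda)\subseteq\widehat V_{w_{N+1}}(\Lambda)$.

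Next I would treat the connecting maps. The vector $u$ is a $\CG$--invariant vector of $\D(\ell,\ell\theta)$, unique up to scalar and of weight $0$; placing it in degree $0$, the assignment $v\mapsto u\otimes v$ descends to the associated graded and defines a grade--preserving $\CG$--module homomorphism $\bold V^{(N)}\to\bold V^{(N+1)}$, injective since $u\neq 0$ (here one uses $\CG\cdot u=0$, so $\CG$ acts only on the $v$--factor). The key claim is that, under the identifications above, this map is precisely the Demazure inclusion $\widehat V_{w_N}(\Lambda)\hookrightarrow\widehat V_{w_{N+1}}(\Lambda)$. To verify it I would examine the cyclic $\CG$--submodule of $\bold V^{(N+1)}$ generated by the image of the cyclic vector of $\bold V^{(N)}$: it is a quotient of $W_{\loc}(\lambda)$, and its $\lie g$--character coincides with that of $\widehat V_{w_N}(\Lambda)$ by the tensor product decomposition of Theorem~\ref{tensordec} together with the Demazure character formula recalled in Section~\ref{section5} (computed modulo $I_\delta$), which forces the image to be the asserted Demazure submodule.

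Finally I would pass to the limit. Since $w_N(\Lambda)$ recedes to infinity in the $\theta$--direction, the sequence $(w_N)$ is cofinal in $\widehat W$, whence $\bigcup_N\widehat V_{w_N}(\Lambda)=\widehat V(\Lambda)$ as $\widehat{\lie g}$--modules, and a fortiori as $\CG$--modules. Therefore $\bold V^{\infty}_{\ell,\lambda}=\varinjlim_N\bold V^{(N)}\cong_{\CG}\widehat V(\Lambda)$, as claimed.

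The step I expect to be hardest is the identification of the connecting maps with the Demazure inclusions in the \emph{twisted} setting. Because $\CG$ is not preserved by the evaluation--shift automorphisms $t^k\mapsto(t+a)^k$ of $\overline{\lie g}[t]$, one cannot simply import the untwisted embedding, and the explicit $\CG$--action on the fusion product must be combined with the character identity above to pin down the image. A subsidiary technical point is the cofinality of $(w_N)$: as $\theta$ lies on walls of the finite Weyl chamber, cofinality should be argued as in \cite{FoL07}, by controlling the $\alpha_0$--strings through the extremal weights rather than appealing to regularity of $\theta$.
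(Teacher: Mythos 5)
Your proposal is correct and takes essentially the same route as the paper: the paper's own proof consists precisely of invoking Theorem~\ref{mainthmsection6} to get $\D(\ell,(N+1)\ell\theta+\lambda)\cong_{\CG} \D_{\overline{\lie g}}(\ell,\ell\theta)*\D(\ell,N\ell\theta+\lambda)$ and then citing the limit argument of \cite[Theorem 9]{FoL07}, whose content (realizing the finite stages as an increasing, exhaustive chain of $\CG$--stable Demazure submodules of $\widehat{V}(\Lambda)$ and matching the connecting maps $v\mapsto u\otimes v$ with the Demazure inclusions) is exactly what you have unpacked. The differences are purely expository --- your discussion of $w_0=-\id$, the identification of the connecting maps, and the cofinality of the chain $(w_N)$ are details the paper delegates wholesale to \cite{FoL07}.
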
 
\begin{proof}
Here we follow the ideas of \cite{FoL07}.
By Theorem \ref{mainthmsection6} we have an isomorphism of $\CG$--modules
$$ \D(\ell, (N+1)\ell\theta+\lambda) \cong \D_{\overline{\lie g}}(\ell, \ell \theta) \ast \D(\ell, N\ell\theta+\lambda).$$ Using this isomorphism of Demazure modules, the assertion can be proved in exactly the same way as 
\cite[Theorem 9]{FoL07}. 
\end{proof}

\section{Twisted Q--systems}\label{section7}
In this section, we discuss further consequences of our study and establish the connections
with the twisted $Q$--systems introduced in \cite{HKOTT02}. $Q$--systems for untwisted types have been introduced in \cite{HKOTY99}. We shall use freely and without comment, the notation established in the earlier sections.
\subsection{}
We recall only the definition of the twisted $Q$--system  given in \cite[Section 6]{HKOTT02}. For the untwisted types we refer to \cite[Section 7]{HKOTY99}. Consider the ring $\bz[x_1^{\pm 1},\dots, x_n^{\pm 1}]$  in the indeterminates $x_1,\dots,x_n$, where we recall that $n$ is the rank of $\lie g$. Note that for any $\lambda\in P^+$, the character of $V(\lambda)$ can be regarded as  an element of this ring. A $Q$--system for $\widehat{\lie g}$ is a set of infinitely many commutative
variables $\{Q^{(i)}_j: 1\leq i \leq n, j\in\bz_+\}$  satisfying  $Q^{(i)}_0=1$, and 
\begin{equation}\label{system}Q_j^{(i)}Q_j^{(i)}= Q_{j+1}^{(i)}Q_{j-1}^{(i)}+ \prod_{p\in \Theta(i)} Q_j^{(p)},\end{equation}
where $\Theta(i)$ depends on the type of $\widehat{\lie g}$.
Here, we understand $Q_j^{(i)}=1$ whenever $i\notin\{1,\dots,n\}.$ Below we have listed the choices of $\Theta(i)$.

\vskip 6pt

\noindent 
$\tt A_{2n-1}^{(2)}:$
\begin{eqnarray*}
\Theta(i)&=&\{i-1,i+1\}
\quad \text{for} \quad 1 \le i \le n-1 , \\
\Theta(n)&=&\{n-1,n-1\}
\end{eqnarray*}
\noindent
$\tt A_{2n}^{(2)}:$
\begin{eqnarray*}
\Theta(i)&=&\{ i-1,i+1\}
\quad \text{for} \quad 1 \le i \le n-1 , \\
\Theta(n)&=&\{n-1,n\}. 
\end{eqnarray*}
\noindent
$\tt D_{n+1}^{(2)}:$
\begin{eqnarray*}
\Theta(i)&=&\{i-1,i+1\}
\quad \text{for} \quad 1 \le a \le n-2, \\
 \Theta(n-1)&=&\{n-2,n,n\}, 
\\ 
\Theta(n)&=&\{n-1\}
\end{eqnarray*}
\noindent
$\tt E_{6}^{(2)}:$
\begin{eqnarray*}
\Theta(1)&=&\{2\} , \\
\Theta(2)&=&\{1,3\}, 
\\ 
\Theta(3)&=&\{2,2,4\}, \\ 
\Theta(4)&=& \{3\}.
\end{eqnarray*}
\noindent 
$\tt D_{4}^{(3)}:$
\begin{eqnarray*}
\Theta(1)&=& \{2\}, \\
\Theta(2)&=&\{1,1,1\}.
\end{eqnarray*}
\begin{rem}\label{remqsy}
Let $k$ be maximal such that $k\alpha_i+\alpha_p\in R^+$ for some $1\leq i\leq n$ (resp. $1\leq i< n$ if $\CG$ is hyperspecial) and $p\in \Theta(i)$, then $k d_i\leq \mult_p(\Theta(i))d_p$ where $\mult_p(\Theta(i))$ denotes the multiplicity of $p$ in $\Theta(i)$.
\end{rem}
\subsection{}
Let $\CG$ be a special twisted current algebra and suppose we are given $1\leq i\leq n$ and $\ell\in\bz_+$. The Kirillov--Reshetikhin module $\KRR^{\sigma}(\ell\omega_i)$ defined in \cite[Section 3]{CM06} respectively \cite[Section 2]{CM07} satisfies the simplified defining relations of $V(\bxi(\ell,\ell\omega_i))\cong \D(\ell,\ell\omega_i)$ given in Corollary~\ref{restcorf} and vice versa. It follows that the $\KRR$ module $\KRR^{\sigma}(\ell\omega_i)$ is isomorphic to Demazure module $\D(\ell,\ell\omega_i)$. As far as we know, this fact is nowhere written in the literature for the twisted cases and so we decided to state this result in this paper. Some isomorphisms between $\KRR$ modules and Demazure modules as $\lie g$--modules can be deduced by combining the results of \cite[Section 4]{FoL06} and \cite[Section 3]{CM06} respectively \cite[Section 2]{CM07}. We remark that the isomorphism between the Kirillov--Reshetikhin modules and the Demazure modules for untwisted types was proved earlier in \cite[Section 5]{CM06} and \cite[Section 3.2]{FoL07}.

\begin{prop}\label{prop23}
\label{krdem} Let $\CG$ be a special twisted current algebra. For $1\leq i\leq n$ and $\ell\in\bz_+$, we have an isomorphism of $\CG$--modules, 
\begin{gather*}\KRR^{\sigma}(\ell\omega_i)\cong V(\bxi(\ell, \ell\omega_i))\cong \D(\ell, \ell\omega_i).\end{gather*}\hfill\qedsymbol
\end{prop}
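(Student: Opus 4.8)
The plan is to treat the two claimed isomorphisms separately, since the second requires no new work. The isomorphism $V(\bxi(\ell,\ell\omega_i))\cong\D(\ell,\ell\omega_i)$ is merely the special case $\lambda=\ell\omega_i$ of \thmref{dem3}, which already identifies $V(\bxi(\ell,\lambda))$ with the $\CG$--stable Demazure module $\D(\ell,\lambda)$ for every $\lambda\in P^+$. Hence the entire content of the proposition is the first isomorphism $\KRR^{\sigma}(\ell\omega_i)\cong V(\bxi(\ell,\ell\omega_i))$, and my approach is to exhibit both of these cyclic quotients of the local Weyl module $W_{\loc}(\ell\omega_i)$ as being cut out by the same set of relations.

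First I would record that $\bxi(\ell,\ell\omega_i)$ is a tuple of \emph{rectangular} partitions. Indeed, for every $\alpha\in R^+$ the number $\omega_i(\alpha^\vee)$ is a non--negative integer, so $(\ell\omega_i)(\alpha^\vee)=\ell\,\omega_i(\alpha^\vee)$ is a multiple of $\ell$; writing $(\ell\omega_i)(\alpha^\vee)=(s_\alpha-1)\ell+m_\alpha$ with $0<m_\alpha\le\ell$ forces $m_\alpha=\ell$ whenever the weight is non--zero, and thus $\xi^\alpha=(\ell^{s_\alpha})$ is rectangular (or empty). Consequently \corref{restcorf} applies and yields the simplified presentation: $V(\bxi(\ell,\ell\omega_i))$ is the quotient of $W_{\loc}(\ell\omega_i)$ by the submodule generated by the elements $x_{-\alpha+d_\alpha s_\alpha\delta}\,w_{\ell\omega_i}$ with $\alpha\in\Delta$. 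For a simple root $\alpha_j$ with $j\neq i$ we have $(\ell\omega_i)(\alpha_j^\vee)=0$, so $s_{\alpha_j}=0$ and the corresponding relation is already forced by the local Weyl relation $x_{-\alpha_j}\,w_{\ell\omega_i}=0$; the single genuinely new relation is $x_{-\alpha_i+d_{\alpha_i}\delta}\,w_{\ell\omega_i}=0$ at the node $i$, where $s_{\alpha_i}=1$.

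The remaining step, and the main obstacle, is to match this presentation against the defining relations of $\KRR^{\sigma}(\ell\omega_i)$ as given in \cite[Section 3]{CM06} and \cite[Section 2]{CM07}. Both modules are cyclic, finite--dimensional, and generated by a highest weight vector of weight $\ell\omega_i$ annihilated by $\CN^+\oplus\CH_+$, so it suffices to verify a two--sided containment of relations: that the generator of $V(\bxi(\ell,\ell\omega_i))$ satisfies the defining relations of $\KRR^{\sigma}(\ell\omega_i)$, and conversely. Each containment produces a grade--preserving surjection of cyclic $\CG$--modules fixing the generator, and a pair of such mutually inverse surjections forces an isomorphism, since their composite is a surjective endomorphism of a finite--dimensional graded space. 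I would carry out the relation--checking after translating the relations of \cite{CM06,CM07} into the explicit realization of $\CG$ as a fixed--point subalgebra of $\mathcal{L}(\overline{\lie g})$ used in \secref{section4}; the expected match is that the Chari--Moura relation limiting the loop degree at the node $i$ corresponds precisely to $x_{-\alpha_i+d_{\alpha_i}\delta}\,w_{\ell\omega_i}=0$, while every other relation reduces to a local Weyl relation.

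The only delicate part is the bookkeeping in this last step: reconciling the labelling and normalisation conventions of \cite{CM06,CM07} with those of the present paper, and keeping track of the scaling factor $d_{\alpha_i}$ (which equals $m$ for a long node and $1$ for a short one). Everything else is formal, and once the two relation sets are seen to generate the same submodule of $W_{\loc}(\ell\omega_i)$, the chain of isomorphisms $\KRR^{\sigma}(\ell\omega_i)\cong V(\bxi(\ell,\ell\omega_i))\cong\D(\ell,\ell\omega_i)$ follows at once.
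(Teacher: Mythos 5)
Your proposal is correct and takes essentially the same route as the paper: the paper's entire proof consists of invoking Theorem~\ref{dem3} for the isomorphism $V(\bxi(\ell,\ell\omega_i))\cong\D(\ell,\ell\omega_i)$ and observing that the Kirillov--Reshetikhin module of \cite{CM06,CM07} satisfies the simplified defining relations of Corollary~\ref{restcorf} ``and vice versa,'' which is exactly your relation-matching step. Your write-up only adds explicitly what the paper leaves implicit --- the computation showing $\bxi(\ell,\ell\omega_i)$ is rectangular (so Corollary~\ref{restcorf} applies, with the single non-Weyl relation $x_{-\alpha_i+d_{\alpha_i}\delta}\,w_{\ell\omega_i}=0$) and the standard argument that two mutually inverse surjections between cyclic finite-dimensional graded modules yield an isomorphism.
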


\subsection{}

From now on we suppose again that $\CG$ is hyperspecial or special. Theorem 4.1 of \cite{H10} and Proposition~\ref{prop23} together, prove the following for the special twisted current algebras and 
Theorem 2 of \cite{FoL06} and Theorem 6.3 of \cite{HKOTT02} prove the following for the hyperspecial twisted current algebras.
\begin{prop}\label{qsysteml} 
 The $\lie g$--characters of $\D(\ell,\ell\omega_i)$ satisfy the $Q$--system.  More precisely, for $1\leq i\leq n$ and $\ell\in\bz_+$, we have a (non--canonical) short exact sequence of $\lie g$--modules, 
$$0\to K_{i,\ell}\to \D(\ell,\ell\omega_i)\otimes \D(\ell,\ell\omega_i)\to \D(\ell+1,(\ell+1)\omega_i)\otimes \D(\ell-1,(\ell-1)\omega_i)\to 0,$$ where 
$$K_{i,\ell}\cong \bigotimes_{p\in \Theta(i)}\D(\ell,\ell\omega_p)$$ \hfill\qedsymbol
\end{prop}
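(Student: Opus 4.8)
The plan is to identify the $Q$--system variables with the $\lie g$--characters of the Demazure modules and to reduce the statement to a single character identity, which then produces the short exact sequence for free by complete reducibility. Concretely, I set $Q_\ell^{(i)}=\ch_{\lie h}\D(\ell,\ell\omega_i)$ for $1\le i\le n$ and $\ell\in\bz_+$ (note $Q_0^{(i)}=\ch_{\lie h}\D(0,0)=1$, as required by the $Q$--system), and aim to verify the defining relation \eqref{system}, i.e.
$$\ch_{\lie h}\D(\ell,\ell\omega_i)\cdot\ch_{\lie h}\D(\ell,\ell\omega_i)=\ch_{\lie h}\D(\ell+1,(\ell+1)\omega_i)\cdot\ch_{\lie h}\D(\ell-1,(\ell-1)\omega_i)+\prod_{p\in\Theta(i)}\ch_{\lie h}\D(\ell,\ell\omega_p).$$

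First I would establish this character identity, treating the two families of twisted current algebras separately. For a special twisted current algebra, \propref{prop23} identifies $\D(\ell,\ell\omega_i)$ with the twisted Kirillov--Reshetikhin module $\KRR^\sigma(\ell\omega_i)$ as $\CG$--modules, hence in particular as $\lie g$--modules; the $\lie g$--characters of these Kirillov--Reshetikhin modules are known to satisfy the twisted $Q$--system by Theorem~4.1 of \cite{H10}, so \eqref{system} holds with the above choice of $Q_\ell^{(i)}$. For the hyperspecial twisted current algebra (type $\tt A^{(2)}_{2n}$) I would instead combine the explicit $\lie g$--decomposition of $\D(\ell,\ell\omega_i)$ recorded in Theorem~2 of \cite{FoL06} with the $Q$--system for the corresponding Kirillov--Reshetikhin characters in Theorem~6.3 of \cite{HKOTT02}, matching the two expressions to conclude that $\ch_{\lie h}\D(\ell,\ell\omega_i)$ again obeys \eqref{system}.

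Given the character identity, the short exact sequence is immediate: every finite--dimensional $\lie g$--module is completely reducible, so $\D(\ell,\ell\omega_i)\otimes\D(\ell,\ell\omega_i)$ is a direct sum of irreducibles, and the identity above says exactly that its character equals the sum of the characters of $\D(\ell+1,(\ell+1)\omega_i)\otimes\D(\ell-1,(\ell-1)\omega_i)$ and of $K_{i,\ell}:=\bigotimes_{p\in\Theta(i)}\D(\ell,\ell\omega_p)$. Hence there is an isomorphism of $\lie g$--modules
$$\D(\ell,\ell\omega_i)\otimes\D(\ell,\ell\omega_i)\cong\big(\D(\ell+1,(\ell+1)\omega_i)\otimes\D(\ell-1,(\ell-1)\omega_i)\big)\oplus K_{i,\ell},$$
from which the asserted (necessarily split) short exact sequence is read off; the qualifier \emph{non--canonical} records that neither the embedding of $K_{i,\ell}$ nor the splitting is preferred.

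The main obstacle is the character identity itself, and within it the hyperspecial case. In the special case \propref{prop23} supplies a clean module--level identification with Kirillov--Reshetikhin modules, so the $Q$--system transfers verbatim. In type $\tt A^{(2)}_{2n}$ no single reference achieves this, and one must reconcile the decomposition coming from the Demazure/fusion side (Theorem~2 of \cite{FoL06}, together with the tensor product decompositions of \secref{section5}) with the normalization of the Kirillov--Reshetikhin $Q$--system of \cite{HKOTT02}; the delicate point is to check that the index multiset $\Theta(i)$, including the multiplicities $\mult_p(\Theta(i))$, is matched correctly by the tensor factors $\D(\ell,\ell\omega_p)$ of $K_{i,\ell}$, so that the second summand in \eqref{system} is precisely $\ch_{\lie h}K_{i,\ell}$.
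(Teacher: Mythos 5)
Your proposal is correct and follows essentially the same route as the paper: the paper likewise proves the proposition by invoking Proposition~\ref{prop23} together with Theorem~4.1 of \cite{H10} for the special twisted current algebras, and Theorem~2 of \cite{FoL06} together with Theorem~6.3 of \cite{HKOTT02} for the hyperspecial case, the passage from the character identity to the (split, non--canonical) short exact sequence being immediate from complete reducibility of finite--dimensional $\lie g$--modules. Your write-up merely makes explicit this last step and the identification $Q^{(i)}_\ell=\ch_{\lie h}\D(\ell,\ell\omega_i)$, which the paper leaves implicit.
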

\begin{rem}\label{remunt}
It is proved in \cite{H06} that the character of a Kirillov--Reshetikhin module for untwisted types solves the untwisted $Q$--system and hence by our earlier comments likewise the character of $\D_{\overline{\lie g}}(\ell,\ell\omega_i)$, $i\in I$.
\end{rem}
\subsection{}
We shall prove a stronger statement
\begin{thm}\label{qsystemst}
Given $1\leq i\leq n$, $\ell\in\bz_+$ and $\overline{\omega}_j\in \omega_j^{\sigma}$ for $j\in \Theta(i)\cup \{i\}$, we have a (non--canonical) short exact sequence of $\CG$--modules, 
$$0\to K^{*}_{i,\ell}\stackrel{\iota}{\to}  \D(\ell,2\ell\omega_i)\stackrel{\pi}{\to}  \D_{\overline{\lie g}}(\ell+1,(\ell+1)\overline{\omega}_i) * \D(\ell-1,(\ell-1)\omega_i)\to 0$$ 
$$K^{*}_{i,\ell}\cong \scalebox{2}{$\ast$}_{p\in \Theta(i)}\D_{\overline{\lie g}}(\ell,\ell\overline{\omega}_p). $$ 

\end{thm}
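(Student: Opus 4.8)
The plan is to realize the sequence from a surjection $\pi$ together with an injection $\iota$ onto its kernel, forcing exactness by a dimension count coming from Proposition~\ref{qsysteml}. Note first that all three modules are cyclic $\CG$--modules: the middle term is generated by a highest weight vector of weight $2\ell\omega_i$, while $K^*_{i,\ell}$ is generated (via Proposition~\ref{cmf}) by a vector of weight $\ell\sum_{p\in\Theta(i)}\omega_p$. Since $2\omega_i-\sum_{p\in\Theta(i)}\omega_p$ is a non-negative combination of simple roots (the weight--level content of Remark~\ref{remqsy}), this weight occurs among the weights of $\D(\ell,2\ell\omega_i)$, and in fact in its grade--zero component $\bu(\G)v_{\ell,2\ell\omega_i}\cong V(2\ell\omega_i)$. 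This produces a distinguished grade--zero vector $v_K$, obtained by applying a suitable divided power of a lowering operator to $v_{\ell,2\ell\omega_i}$, which will serve as the image of the cyclic generator of $K^*_{i,\ell}$ under $\iota$.

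For the surjection $\pi$ I would argue exactly as in the proof of Theorem~\ref{mainthmsection6}, using the decomposition $2\ell=(\ell+1)+(\ell-1)$ in place of $2\ell=\ell+\ell$; this is the content of Remark~\ref{danach}. Concretely, the cyclic generator $\mathbf v=v_{\ell+1,(\ell+1)\overline\omega_i}*v_{\ell-1,(\ell-1)\omega_i}$ of the fusion product on the right must be shown to satisfy the simplified defining relations of $\D(\ell,2\ell\omega_i)$ recorded in Theorem~\ref{dem2} and Theorem~\ref{dem3}. As there, each such relation reduces, via the explicit realization of $\CG$ inside $\mathcal L(\overline{\lie g})$ and the freedom to subtract lower--degree terms $\sum_k c_k x_{-\alpha+d_\alpha k\delta}$, to solving a linear system whose coefficient matrix is one of those in \eqref{m1}--\eqref{m2}; invertibility is guaranteed by Lemma~\ref{invmat}. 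This yields the surjective map $\pi$.

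Exactness at the middle will follow from a dimension match. Applying the $\lie g$--module short exact sequence of Proposition~\ref{qsysteml}, together with the identification $\D(\ell,2\ell\omega_i)\cong_{\lie g}\D(\ell,\ell\omega_i)\otimes\D(\ell,\ell\omega_i)$ of Theorem~\ref{tensordec}, the equality $\dim\D_{\overline{\lie g}}(\ell,\ell\overline\omega_p)=\dim\D(\ell,\ell\omega_p)$ of Lemma~\ref{lemwe}, and the fact that a fusion product has the $\lie g$--character of the corresponding tensor product, one obtains $\dim\ker\pi=\dim K^*_{i,\ell}$ (indeed an equality of $\lie h$--characters). It then remains to define $\iota$ and show it carries $K^*_{i,\ell}$ isomorphically onto $\ker\pi$. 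I would set $\iota(\mathbf v_{K^*})=v_K$; well--definedness amounts to checking that $v_K$ satisfies the simplified relations of each fusion factor $\D_{\overline{\lie g}}(\ell,\ell\overline\omega_p)$ as assembled in Proposition~\ref{cmf}, and $\pi\circ\iota=0$ reduces to the single verification $\pi(v_K)=0$.

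The main obstacle is the reverse inclusion $\ker\pi\subseteq\bu(\CG)v_K=\operatorname{im}\iota$; granting it, the dimension equality forces $\iota$ to be injective with image all of $\ker\pi$, completing the proof. I see two ways to attack it. The direct route is to show that $\ker\pi$ is itself cyclic, by isolating the unique ``extra'' defining relation of the target $T=\D_{\overline{\lie g}}(\ell+1,(\ell+1)\overline\omega_i)*\D(\ell-1,(\ell-1)\omega_i)$ relative to $\D(\ell,2\ell\omega_i)$ and tracking its image under the Garland--type identities of Lemma~\ref{garsl_2} and Lemma~\ref{gara_2}, thereby pinning down $\ker\pi=\bu(\CG)v_K$. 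The conceptual route is to transport the untwisted $Q$--system short exact sequence of fusion products of $\overline{\lie g}$--Demazure modules (valid for simply--laced $\overline{\lie g}$ by Remark~\ref{remunt} and \cite{H06,CSVW14,FoL07}) through the associated--graded identification $\D(\ell,\cdot)\cong_{\CG}\gr\D_{\overline{\lie g}}(\ell,\cdot)$ furnished by Theorem~\ref{mainthmsection6}, matching the twisted data $\Theta(i)$ with the $\sigma$--folded adjacencies of $\overline{\lie g}$. Here the case--by--case comparison of $\Theta(i)$, including its multiplicities (e.g.\ $\{n-1,n-1\}$ or $\{n-2,n,n\}$), with the folded Cartan data, guided by Remark~\ref{remqsy}, is the technical heart of the argument.
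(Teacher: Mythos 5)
Your overall skeleton matches the paper's: the surjection $\pi$ is built exactly as you say (Lemma~\ref{piex} runs the linear--system argument of Theorem~\ref{mainthmsection6}, cf.\ Remark~\ref{danach} and Lemma~\ref{invmat}), and the closing dimension count combining Proposition~\ref{qsysteml}, Theorem~\ref{tensordec} and Lemma~\ref{lemwe} is also the paper's. The genuine gap is your construction of $\iota$. You place the intended image $v_K$ of the cyclic generator of $K^{*}_{i,\ell}$ in the grade--zero component $\bu(\lie g)v_{\ell,2\ell\omega_i}\cong V(2\ell\omega_i)$, as a divided power of a $\lie g$--lowering operator applied to $v_{\ell,2\ell\omega_i}$. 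This cannot work, for two independent reasons. First, $\pi$ is a morphism of graded cyclic modules sending generator to generator, and the grade--zero components of both $\D(\ell,2\ell\omega_i)$ and of the fusion product are the irreducible $\lie g$--module $V(2\ell\omega_i)$; hence $\pi$ restricts to an isomorphism in grade zero, so $\ker\pi$ contains no nonzero grade--zero vector at all. Second, the generator of $K^{*}_{i,\ell}\cong\D\big(\ell,\ell\sum_{p\in\Theta(i)}\omega_p\big)$ (Theorem~\ref{mainthmsection6} with $\lambda_0=0$) is annihilated by $\CN^+\oplus\CH_+$, whereas the only $\lie n^+$--singular vectors of $V(2\ell\omega_i)$ are the multiples of $v_{\ell,2\ell\omega_i}$; concretely, $x_{\alpha_i}(x_{-\alpha_i})^{(\ell)}v_{\ell,2\ell\omega_i}=(\ell+1)(x_{-\alpha_i})^{(\ell-1)}v_{\ell,2\ell\omega_i}\neq 0$, so no $\CG$--module map can send the generator of $K^{*}_{i,\ell}$ to your $v_K$. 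Your ``well--definedness check'' would therefore fail, not merely be technical.

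The paper's replacement for $v_K$ is the \emph{positive--grade} vector $(x_{-\beta})^{\ell}v_{\ell,2\ell\omega_i}$, where $\beta$ is the real affine root $\alpha_i+d_i\delta$ (resp.\ $\tfrac12(\alpha_n+3\delta)$ in the exceptional hyperspecial case). It has the same $\lie h$--weight $\ell\sum_{p\in\Theta(i)}\omega_p$ as your vector but lives in grade $\ell d_i$ (resp.\ $\tfrac{3\ell}{2}$), and, crucially, it is \emph{extremal}: writing $\D(\ell,2\ell\omega_i)=\widehat{V}_w(\Lambda)$, the reflected weight $w_\beta w(\Lambda)$ equals $\ell\Lambda_0-\ell\sum_{p\in\Theta(i)}\omega_p$ modulo $\delta$, so $\bu(\CG)(x_{-\beta})^{\ell}v_{\ell,2\ell\omega_i}$ is itself the Demazure module $\D\big(\ell,\ell\sum_{p\in\Theta(i)}\omega_p\big)\cong K^{*}_{i,\ell}$ (Lemma~\ref{iotax}). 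Injectivity of $\iota$ is thus automatic --- no relation--checking, and no need to prove $\ker\pi$ cyclic. Nonvanishing of $(x_{-\beta})^{\ell}v_{\ell,2\ell\omega_i}$ comes from tightness of the Demazure relations (Theorem~\ref{demq}), and membership in $\ker\pi$ is the explicit computation in Lemma~\ref{piex}, which shows $x_{-\beta}$ acts on the fusion generator only through the second factor, where $(x_{-\alpha_i})^{\ell}v_{\ell-1,(\ell-1)\omega_i}=0$. With this $\iota$, your dimension count closes the argument, and your ``main obstacle'' (the reverse inclusion $\ker\pi\subseteq$ image of $\iota$) dissolves; in particular neither of your fallback routes is needed, and the second one (pushing the untwisted $Q$--system sequence through $\gr$) would face the separate problem that $\gr$ of a short exact sequence need not remain exact without compatibility of the filtrations involved.
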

The proof of the theorem occupies the rest of this section.
\subsection{}

The following Lemma proves the existence of $\pi$.

\begin{lem}\label{piex} There exists a surjective map of $\CG$--modules  

$$\pi: \D(\ell,2\ell\omega_i)\to \D_{\overline{\lie g}}(\ell+1,(\ell+1)\overline{\omega}_i) * \D(\ell-1,(\ell-1)\omega_i),$$ such that 
 $$0\ne (x_{-\beta})^{\ell}v_{\ell,2\ell\omega_i}\in\ker{\pi},$$
where $\pm\beta=\pm\alpha_i+d_{i}\delta$ if $\CG$ is special or $i=n$ and otherwise $\pm\beta=\frac{1}{2}(\pm\alpha_n+3\delta)$.
\begin{pf}
Recall that the defining relations of Demazure modules are tight and thus by Theorem~\ref{demq} and \cite[Corollary 4.9]{FK11}
$$(x_{-\beta})^{\ell}v_{\ell,2\ell\omega_i}\ne 0.$$ It remains to prove the existence of $\pi$ and  $(x_{-\beta})^{\ell}v_{\ell,2\ell\omega_i}\in\ker{\pi}$. We give the proof only for the hyperspecial twisted current algebra, since the other cases proceed similarly. 
Since 
$$\big(X_{i,i}^-\otimes t\big)v_{\ell+1,(\ell+1)\overline{\omega}_i}=\big(X_{2n+1-i,2n+1-i}^-\otimes t\big)v_{\ell+1,(\ell+1)\overline{\omega}_i}=0$$ 
and depending on the choice of $\overline{\omega}_i$
$$\big(X_{i,i}^-\otimes 1\big)v_{\ell+1,(\ell+1)\overline{\omega}_i}=0,\ \mbox{ or }\ \big(X_{2n+1-i,2n+1-i}^-\otimes 1\big)v_{\ell+1,(\ell+1)\overline{\omega}_i}=0$$ 
we can choose similar to \eqref{diezweite} a complex number $c\in \bc$ such that for $r=1,2$
$$\big(x_{-\alpha_i+rd_{i}\delta}-cx_{-\alpha_i+(r-1)\delta}\big)v_{\ell+1,(\ell+1)\overline{\omega}_i}=0,\ \mbox{ resp. }\big(x_{\frac{1}{2}(-\alpha_n+(2r+1)\delta)}-cx_{\frac{1}{2}(-\alpha_n+(2r-1)\delta)}\big)v_{\ell+1,(\ell+1)\overline{\omega}_i}=0$$
Thus $r=2$ ensures the existence of $\pi$ and $r=1$ shows that $x_{-\beta}$ acts only on the second factor as $x_{-\alpha_i}$ respectively $x_{\frac{1}{2}(-\alpha_n+\delta)}$. Therefore,
$$(x_{-\beta})^{\ell}v_{\ell+1,(\ell+1)\overline{\omega}_i}*v_{\ell-1,(\ell-1)\omega_i}=v_{\ell+1,(\ell+1)\overline{\omega}_i}*(x_{-\alpha_i})^{\ell}v_{\ell-1,(\ell-1)\omega_i}=0$$
$$\mbox{resp. }(x_{-\beta})^{\ell}v_{\ell+1,(\ell+1)\overline{\omega}_i}*v_{\ell-1,(\ell-1)\omega_i}=v_{\ell+1,(\ell+1)\overline{\omega}_i}*(x_{\frac{1}{2}(-\alpha_n+\delta)})^{\ell}v_{\ell-1,(\ell-1)\omega_i}=0.$$
\end{pf}
\end{lem}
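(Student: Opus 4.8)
The plan is to prove the three assertions of the lemma in turn: the existence of the surjection $\pi$, the non-vanishing of $(x_{-\beta})^{\ell}v_{\ell,2\ell\omega_i}$, and the fact that this element lies in $\ker\pi$. The existence of $\pi$ I would deduce from the machinery already developed. The target is the cyclic $\CG$--module generated by $\mathbf v=v_{\ell+1,(\ell+1)\overline{\omega}_i}*v_{\ell-1,(\ell-1)\omega_i}$, which is a quotient of $W_{\loc}(2\ell\omega_i)$ by Proposition~\ref{cmf} (the first factor being the untwisted Demazure module pulled back through $\Psi$, the second sitting in the graded $W$--slot). By Theorem~\ref{dem2} (resp.\ Theorem~\ref{dem3}) the module $\D(\ell,2\ell\omega_i)\cong V(\bxi(\ell,2\ell\omega_i))$ is the quotient of $W_{\loc}(2\ell\omega_i)$ by the simplified relations \eqref{finer21}--\eqref{finer22}, so it suffices to check that $\mathbf v$ satisfies these. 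This is exactly the computation of the proof of Theorem~\ref{mainthmsection6}, now run on the two--factor decomposition $2\ell\omega_i=(\ell+1)\omega_i+(\ell-1)\omega_i$ with \emph{unequal} levels $\ell+1$ and $\ell-1$ in the manner of Remark~\ref{danach}: for each root one expands $x_{-\beta}\mathbf v$ over the tensor factors, absorbs the lower--grade contributions into a correction $\sum_k c_k x_{-\beta+k\delta}$, and fixes the coefficients $c_k$ by inverting the matrix of Lemma~\ref{invmat}, so that the surviving action on the untwisted factor is a power $X^-\otimes t^{j}$ with $j$ large enough to vanish on $v_{\ell+1,(\ell+1)\overline{\omega}_i}$. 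The only new instance is the top--grade relation at $r=s_{\alpha_i}=2$, which the same matrix argument disposes of.

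For the non-vanishing I would invoke tightness of the Demazure presentation. By Theorem~\ref{demq} together with \eqref{gleich}--\eqref{gleich2}, a direct computation shows that the defining exponent attached to the root $-\beta$ equals $\ell$; for instance $k_{-\alpha_i+d_i\delta}=\max\{0,\,2\ell-\ell\}=\ell$, and the half-root case is handled identically. Hence $(x_{-\beta})^{\ell+1}v_{\ell,2\ell\omega_i}=0$ while the relation is \emph{tight}, i.e.\ $(x_{-\beta})^{\ell}v_{\ell,2\ell\omega_i}\neq0$, which is precisely the content of \cite[Corollary~4.9]{FK11} that I would quote.

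Finally, for the kernel membership I would show $(x_{-\beta})^{\ell}\mathbf v=0$ by forcing $x_{-\beta}$ to act through the second tensor factor only. In the hyperspecial realization one has $(X_{i,i}^-\otimes t)v_{\ell+1,(\ell+1)\overline{\omega}_i}=(X_{2n+1-i,2n+1-i}^-\otimes t)v_{\ell+1,(\ell+1)\overline{\omega}_i}=0$, and, depending on the choice of $\overline{\omega}_i\in\omega_i^{\sigma}$, exactly one of $(X_{i,i}^-\otimes1)$, $(X_{2n+1-i,2n+1-i}^-\otimes1)$ annihilates $v_{\ell+1,(\ell+1)\overline{\omega}_i}$. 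Using this, as in \eqref{diezweite}, I would choose a scalar $c$ with $(x_{-\alpha_i+rd_i\delta}-c\,x_{-\alpha_i+(r-1)\delta})v_{\ell+1,(\ell+1)\overline{\omega}_i}=0$ for $r=1,2$ (resp.\ the half-root identity when $\beta=\tfrac12(\alpha_n+3\delta)$): the case $r=2$ re-confirms the relation used for $\pi$, while the case $r=1$ shows that on $\mathbf v$ the operator $x_{-\beta}$ agrees with $x_{-\alpha_i}$ acting on the level-$(\ell-1)$ factor alone. Iterating gives $(x_{-\beta})^{\ell}\mathbf v=v_{\ell+1,(\ell+1)\overline{\omega}_i}*(x_{-\alpha_i})^{\ell}v_{\ell-1,(\ell-1)\omega_i}$, and the right-hand power vanishes because $(\ell-1)\omega_i(\alpha_i^\vee)=\ell-1<\ell$ forces $(x_{-\alpha_i})^{\ell}v_{\ell-1,(\ell-1)\omega_i}=0$ by the local Weyl relations. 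I expect this last paragraph to be the main obstacle: making precise the passage ``$x_{-\beta}$ acts only on the second factor'' inside the graded fusion product requires carefully tracking how the grading shift interacts with the correction term $c\,x_{-\beta+k\delta}$, and one must treat the two cases dictated by $\overline{\omega}_i$, the special twisted case, and the half-root hyperspecial case by separate but parallel bookkeeping.
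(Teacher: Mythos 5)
Your proposal is correct and follows essentially the same route as the paper's own proof: non-vanishing of $(x_{-\beta})^{\ell}v_{\ell,2\ell\omega_i}$ via tightness of the presentation in Theorem~\ref{demq} together with \cite[Corollary 4.9]{FK11}, existence of $\pi$ by verifying the simplified relations on $\mathbf v$ through the correction-term/matrix-inversion mechanism of Theorem~\ref{mainthmsection6} (the paper phrases this as the $r=2$ instance of the identity chosen ``similar to \eqref{diezweite}''), and kernel membership via the same annihilation facts $(X_{i,i}^-\otimes t)v_{\ell+1,(\ell+1)\overline{\omega}_i}=(X_{2n+1-i,2n+1-i}^-\otimes t)v_{\ell+1,(\ell+1)\overline{\omega}_i}=0$ and the $r=1$ identity forcing $x_{-\beta}$ to act on the second factor as $x_{-\alpha_i}$ (resp.\ $x_{\frac12(-\alpha_n+\delta)}$), where the power $\ell$ then kills $v_{\ell-1,(\ell-1)\omega_i}$. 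The delicate point you flag --- tracking the grading shift in the fusion product against the correction term --- is glossed over at exactly the same level of detail in the paper's proof, so there is no substantive divergence.
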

\subsection{}
The next result establishes the existence of $\iota$. 
\begin{lem}\label{iotax} There exists an injective non-zero map of $\CG$--modules, 
$$\widetilde{\iota}:K_{i,\ell}^*\to \ker\pi.$$
  \end{lem}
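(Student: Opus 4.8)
The plan is to realize $K^{*}_{i,\ell}$ as a single twisted Demazure module, to send its cyclic generator to a distinguished vector of $\ker\pi$, and then to upgrade the resulting nonzero homomorphism to an isomorphism onto $\ker\pi$ by a dimension count. First I would rewrite the source: applying \thmref{mainthmsection6} with $\lambda_0=0$ and $\{\lambda_i\}=\{\omega_p:p\in\Theta(i)\}$ (counted with multiplicity) gives a $\CG$--module isomorphism $K^{*}_{i,\ell}\cong\D(\ell,\nu)$, where $\nu=\ell\sum_{p\in\Theta(i)}\omega_p\in P^+$. Thus $K^{*}_{i,\ell}$ is cyclic on a generator $v_\nu$ whose annihilator is given by the simplified Demazure relations of \thmref{dem2} (resp. \thmref{dem3}). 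I would then locate inside $\ker\pi$ a nonzero $\CG$--singular vector $u$ of $\lie h$--weight $\nu$, obtained from $v_{\ell,2\ell\omega_i}$ by applying the lowering operators carrying the weight $2\ell\omega_i$ down to $\nu$ (the half--root operators $x_{-\frac{\alpha}{2}+(k+\frac12)\delta}$ in the hyperspecial case, matching the definition of $\Theta(i)$ recorded in \remref{remqsy}), the vanishing $\pi(u)=0$ being checked exactly as in \lemref{piex}, and set $\widetilde\iota(v_\nu)=u$.

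The main work is to verify that $u$ satisfies all the defining relations of $\D(\ell,\nu)$, so that $\widetilde\iota$ is a well-defined $\CG$--map. Each such relation is an $x_{-\gamma}^{k+1}$--type or Garland--type relation supported on one of the rank-one subalgebras of \propref{sp2}, so I would reduce it, via \lemref{garsl_2} and \lemref{gara_2}, to an identity in $\bu(\mathfrak{sl}_2[t])$ (or in the enveloping algebra of the rank-one twisted current algebra of type $\tt A_2^{(2)}$) applied to $v_{\ell,2\ell\omega_i}$, and then invoke the relations already holding at $v_{\ell,2\ell\omega_i}$ in $\D(\ell,2\ell\omega_i)$ (\thmref{demq}) together with the commutation relations among the lowering operators defining $u$. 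That $\widetilde\iota$ is nonzero is immediate from \lemref{piex}: the vector $u$ generates, under the rank-one subalgebra attached to $\alpha$, the nonzero vector $w=(x_{-\beta})^{\ell}v_{\ell,2\ell\omega_i}$, so $u\neq0$. This relation bookkeeping, and the precise identification of $u$ and of its weight $\nu$ in the hyperspecial case, is the step I expect to be the main obstacle.

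Finally, for injectivity I would pass to $\lie g$--characters. Combining the tensor product decomposition \thmref{tensordec} with the fact that a fusion product agrees with the ordinary tensor product as a $\lie g$--module, the numerical $Q$--system of \propref{qsysteml} yields $\dim\D(\ell,2\ell\omega_i)=\dim K^{*}_{i,\ell}+\dim\big(\D_{\ol{\lie g}}(\ell+1,(\ell+1)\ol\omega_i)\ast\D(\ell-1,(\ell-1)\omega_i)\big)$; since $\pi$ is surjective (\lemref{piex}), this gives $\dim\ker\pi=\dim K^{*}_{i,\ell}$. It then suffices to show that $\bu(\CG)u=\ker\pi$, i.e. that $\ker\pi$ is generated by $u$ as a $\CG$--module: the quotient $\D(\ell,2\ell\omega_i)/\bu(\CG)u$ is cyclic and satisfies the simplified relations of the target module, so $\pi$ factors through an isomorphism onto it, forcing $\dim\bu(\CG)u=\dim\ker\pi=\dim K^{*}_{i,\ell}$.

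Since $\widetilde\iota$ is then a surjection $\D(\ell,\nu)=K^{*}_{i,\ell}\twoheadrightarrow\bu(\CG)u=\ker\pi$ between spaces of equal finite dimension, it must be an isomorphism, and in particular the required injective nonzero map $K^{*}_{i,\ell}\to\ker\pi$. The two places where I expect genuine difficulty are the relation verification of the second paragraph (the twisted Garland identities force one to keep careful track of the half--root contributions), and the generation statement $\ker\pi=\bu(\CG)u$ in the third paragraph, which is what ultimately converts the numerical $Q$--system into the module-theoretic injectivity.
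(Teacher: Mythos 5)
Your proposal starts on the paper's track: like the paper, you identify $K^*_{i,\ell}\cong \D(\ell,\nu)$, $\nu=\ell\sum_{p\in\Theta(i)}\omega_p$, via \thmref{mainthmsection6}, and you aim the generator at the vector $u=(x_{-\beta})^{\ell}v_{\ell,2\ell\omega_i}\in\ker\pi$ supplied by \lemref{piex}. But the step you yourself flag as the main obstacle --- checking by rank-one Garland computations that $u$ satisfies every defining relation of $\D(\ell,\nu)$ from \thmref{demq} --- is exactly the step the paper never performs, and leaving it unexecuted is a genuine gap: nothing in Sections~\ref{section2}--\ref{section6} makes those verifications routine. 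The paper's argument is structural instead: writing $\D(\ell,2\ell\omega_i)=\widehat{V}_w(\Lambda)$ with $w(\Lambda)=\ell\Lambda_0-2\ell\omega_i+r\delta$, it computes $w_{\beta}w(\Lambda)=\ell\Lambda_0-\ell\sum_{p\in\Theta(i)}\omega_p+(r+\ell d_i)\delta$, so the $\widehat{\lie b}$--module generated by the extremal line $\widehat{V}(\Lambda)_{w_\beta w(\Lambda)}$ \emph{is}, by the very definition of Demazure modules (and Mathieu's presentation), the $\CG$--stable Demazure module $\D(\ell,\nu)$, and as a $\CG$--module it is generated by the extremal line $\widehat{V}(\Lambda)_{w_0w_\beta w(\Lambda)}=\bc\,(x_{-\beta})^{\ell}v_{\ell,2\ell\omega_i}$. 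Hence $\bu(\CG)u$ is not merely a quotient of $\D(\ell,\nu)$ --- which is all your relation-checking could ever yield --- but is equal to $\D(\ell,\nu)$ as a submodule of $\D(\ell,2\ell\omega_i)$; well-definedness and injectivity of $\widetilde\iota$ come simultaneously and for free.

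The second gap is in your closing dimension/generation argument, and it is not repairable as stated. Granting your second paragraph, you only know that $\bu(\CG)u$ is a quotient of $\D(\ell,\nu)$, so (given $\dim\ker\pi=\dim K^*_{i,\ell}$, which does follow from \lemref{piex}, \propref{qsysteml}, \thmref{tensordec} and \lemref{lemwe}) injectivity of your map is \emph{equivalent} to the generation statement $\bu(\CG)u=\ker\pi$; you must therefore establish generation independently. Your route --- that $\D(\ell,2\ell\omega_i)/\bu(\CG)u$ ``satisfies the simplified relations of the target module'' --- has no content, because the target $\D_{\overline{\lie g}}(\ell+1,(\ell+1)\overline{\omega}_i)*\D(\ell-1,(\ell-1)\omega_i)$ is a mixed-level fusion product: it is defined as an associated graded of a tensor product and has no known presentation by generators and relations (\thmref{mainthmsection6} identifies only equal-level fusion products with Demazure modules). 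So your argument is circular: injectivity needs generation, and generation is only ever obtained from injectivity. The paper's logic runs the other way around: injectivity is free as above, the lemma asserts nothing more, and the equality $\bu(\CG)u=\ker\pi$ is deduced \emph{afterwards} from the dimension chain in Section~\ref{section7} that combines the lemma with \propref{qsysteml}, \thmref{tensordec} and \lemref{lemwe}.
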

  \begin{pf} 
Let $w\in \widehat{W}$ and $\Lambda$ be the dominant integral $\widehat{\lie g}$--weight such that $\D(\ell,2\ell\omega_i)$ is the $\widehat{\lie b}$-module generated by the line $\widehat{V}(\Lambda)_{\omega(\Lambda)}$. Hence $w(\Lambda)=\ell \Lambda_0-2\ell\omega_i+r\delta$ for some $r\in \bz$. Recall that $\D(\ell,2\ell\omega_i)$ is $\CG$-stable and is generated as a $\CG$--module by the line $\widehat{V}(\Lambda)_{w_0\omega(\Lambda)}$. We obtain 
$$w_{\alpha_i+d_i\delta}\omega(\Lambda)=\ell\Lambda_0-\ell\big(\sum_{p\in\theta(i)}\omega_p\big)+(r+\ell d_i)\delta,$$ $$\big(\mbox{resp. }w_{\frac{1}{2}(\alpha_n+3\delta)}\omega(\Lambda)=\ell\Lambda_0-\ell(\omega_{n-1}+\omega_n)+(r+\frac{3}{2}\ell)\delta.\big) $$
Hence the $\widehat{\lie b}$-module generated by the line $\widehat{V}(\Lambda)_{w_{\beta}\omega(\Lambda)}$ is $\CG$--stable and is generated as a $\CG$--module by the line  $\widehat{V}(\Lambda)_{w_0w_{\beta}\omega(\Lambda)}=\bc (x_{-\beta})^{\ell}v_{\ell,2\ell\omega_i}$. Therefore,
$$K_{i,\ell}^*\cong \D\big(\ell, \ell\big(\sum_{p\in\theta(i)}\omega_p\big)\big)\cong \bu(\CG)\widehat{V}(\Lambda)_{w_0w_{\beta}\omega(\Lambda)}\cong \bu(\CG)(x_{-\beta})^{\ell}v_{\ell,2\ell\omega_i}\hookrightarrow \ker \pi.$$

\end{pf}
\subsection{}
By Lemma~\ref{piex} we have a short exact sequence 
$$0\to \ker \pi{\to}  \D(\ell,2\ell\omega_i)\stackrel{\pi}{\to}  \D_{\overline{\lie g}}(\ell+1,(\ell+1)\overline{\omega}_i) * \D(\ell-1,(\ell-1)\omega_i)\to 0,$$
which is non-split, since $\D(\ell,2\ell\omega_i)$ is indecomposable.
Together with Lemma~\ref{iotax} and Proposition~\ref{qsysteml} we obtain
\begin{align*}\dim \D(\ell,2\ell\omega_i)&= \dim (\ker \pi) + \dim\big(\D_{\overline{\lie g}}(\ell+1,(\ell+1)\overline{\omega}_i) * \D(\ell-1,(\ell-1)\omega_i)\big)&\\&
\geq \dim(K_{i,\ell}^{*})+\dim\big(\D_{\overline{\lie g}}(\ell+1,(\ell+1)\overline{\omega}_i) * \D(\ell-1,(\ell-1)\omega_i)\big)&\\&
\geq \dim(K_{i,\ell})+\dim\big(\D(\ell+1,(\ell+1)\omega_i) \otimes  \D(\ell-1,(\ell-1)\omega_i)\big)&\\&
=\dim \D(\ell,2\ell\omega_i).
\end{align*}
Hence $\widetilde{\iota}$ is an isomorphism proving Theorem~\ref{qsystemst}.
We complete the paper by giving a proof of Lemma~\ref{lemwe}.

\subsection{Proof of Lemma~\ref{lemwe}}
By the tensor product decomposition of untwisted Demazure modules proved in \cite{FoL06} and Theorem~\ref{tensordec} it suffices to show $\dim_{\overline{\lie g}}\D(\ell,\ell \omega_i)=\dim \D(\ell,\ell \omega_i)$ for $1\leq i \leq n$. We prove this equality by induction on $\ell$. If $\ell=1$ this follows from \cite[Lemma 5.3]{FK11} if $\CG$ is special and otherwise
$$\dim \D_{\overline{\lie g}}(1,\omega_i)=\dim V_{\overline{\lie g}}(\omega_i)=1+\sum^i_{j=1}V(\omega_j)=\dim \D(1,\omega_i),\ \mbox{ for $1\leq i \leq n$}$$
and the induction begins. 
Since the characters of $\D_{\overline{\lie g}}(\ell,\ell\omega_i)$ and $\D(\ell,\ell \omega_i)$ respectively solve the untwisted and twisted $Q$--system respectively we can deduce from Proposition~\ref{qsysteml} and Remark~\ref{remunt} and the induction hypothesis
$$\dim \D_{\overline{\lie g}}((\ell+1),(\ell+1)\omega_i)\dim \D((\ell-1),(\ell-1)\omega_i)=\dim \D(\ell,2\ell\omega_i)-\prod_{p\sim i}\dim \D_{\overline{\lie g}}(\ell,\ell \omega_p),$$
$$\dim \D((\ell+1),(\ell+1)\omega_i)\dim \D((\ell-1),(\ell-1)\omega_i)=\dim \D(\ell,2\ell\omega_i)-\prod_{p\in\sigma(i)}\dim \D(\ell,\ell \omega_p),$$
where $p \sim i$ means $(\alpha_p,\alpha_i)<0$. 
Now consider the set $\{p\mid p \sim i\}$ and replace any $p>n$ by $\sigma^j(p)$ where $j$ is minimal such that $\sigma^j(p)\leq n$. The corresponding set is equal to $\theta(i)$. For example if $\overline{\lie g}=\tt A_{2n}$ and $i=n$ we have $\{p\mid p \sim i\}=\{n-1,n+1\}$ and we replace $n+1$ by $n$. The Lemma follows now easily.





\bibliographystyle{plain}
\bibliography{kv-bib}
\end{document}